\documentclass[12pt]{amsart}
\usepackage{a4wide}
\usepackage{amsmath}
\usepackage{amsfonts}
\usepackage{amssymb}
\usepackage{hyperref}
\usepackage{mathrsfs}
\usepackage{pb-diagram}
\usepackage{color}
\usepackage{dynkin-diagrams}
\usepackage{epstopdf}
\usepackage{enumerate}
\usepackage{amsthm}
\usepackage{bbm}
\usepackage{tikz}
\usepackage{tikz-cd}
\usepackage{cleveref}
\usepackage{stmaryrd}
\usepackage[all]{xypic}
\usepackage{epstopdf}
\newtheorem{lemma}{Lemma}[section]
\newtheorem{remark}[lemma]{Remark}
\newtheorem{theorem}[lemma]{Theorem}
\newtheorem{corollary}[lemma]{Corollary}

\newtheorem{definition}[lemma]{Definition}
\newtheorem{proposition}[lemma]{Proposition}
\newtheorem{hypothesis}[lemma]{Hypothesis}

\usepackage[style=alphabetic,citestyle=alphabetic,maxnames=99,maxcitenames=99, maxalpha names=99, sorting=nyt]{biblatex}
\addbibresource{bibliography.bib}

\usepackage{dsfont}
\usepackage{comment}
\hypersetup{colorlinks,%
    citecolor=brown,%
    filecolor=black,%
   linkcolor=blue,%
   urlcolor=black}

\newcommand{\R}{\mathbb{R}}

\newcommand{\Z}{\mathbb{Z}}

\renewcommand{\subset}{\subseteq}
\renewcommand{\supset}{\supseteq}

               % left-shift
%\newcommand{\rshft}[1]{\overset{\rightarrow}{#1}\vphantom{#1}}              % right-shift

\newcommand{\sgn}{\mathrm{sgn}}

\newcommand{\mca}[1]{\mathcal{#1}}

\newcommand{\End}{\mathrm{End}}
\newcommand{\Hom}{\mathrm{Hom}}

\newcommand{\WF}{\mathrm{WF}}

\newcommand{\s}{\mathbf{s}}

\begin{document}

	\title{The stable wave front set of theta representations}
	
	%\begin{abstract}
	%\end{abstract}
	
	\begin{abstract}
            We compute the stable wave front set of theta representations for certain tame Brylinski-Deligne covers of a connected reductive $p$-adic group. The computation involves two main inputs. First we use a theorem of Okada, adapted to covering groups, to reduce the computation of the wave front set to computing the Kawanaka wave front set of certain representations of finite groups of Lie type. Second, to compute the Kawanaka wave front sets we use Lusztig's formula. This requires a careful analysis of the action of the pro-$p$ Iwahori-Hecke algebra on the theta representation, using the structural results about Hecke algebras developed by Gao-Gurevich-Karasiewicz and Wang.
	\end{abstract}

	\author{Edmund Karasiewicz}
	\address{Department of Mathematics, National University of Singapore, Singapore.}
	\email{karasiee@nus.edu.sg}
	
	\author{Emile Okada}
	\address{Department of Mathematics, National University of Singapore, Singapore.}
	\email{okada@nus.edu.sg}

 	\author{Runze Wang}
	\address{School of Mathematical Sciences, Zhejiang University, 866 Yuhangtang Road, Hangzhou, China 310058}
	\email{wang\underline{\ }runze@zju.edu.cn}

	\date{\today}
	
	\maketitle

%%%%%%%%%%%%%%%%%%%%%%%%%%%%%%%%%%%%%%%%%%%%%%%
%%%%%%%%%%%%%%%%%%%%%%%%%%%%%%%%%%%%%%%%%%%%%%%
%INTRO
%%%%%%%%%%%%%%%%%%%%%%%%%%%%%%%%%%%%%%%%%%%%%%%
%%%%%%%%%%%%%%%%%%%%%%%%%%%%%%%%%%%%%%%%%%%%%%%

\section{Introduction}

Our main result is a computation of the stable wave front set of theta representations of certain tame Brylinski-Deligne covers of a connected reductive $p$-adic group. In particular, this proves a conjecture of Gao-Tsai \cite[Conjecture 2.5, equation (2.3)]{GT22}. We begin with some background and motivation. Then we state our main theorem and outline the proof.

The higher theta representations were introduced by Kubota \cite{K69}.  Motivated by the connection between quadratic reciprocity and classical theta functions (known to Hecke and possibly earlier), Kubota was interested in developing a connection between automorphic forms and $n$-th power reciprocity. Kubota's insight was to focus on the construction of the classical theta functions as residues of Eisenstein series on a $2$-fold cover of $\mathrm{GL}_{2}$. With this reference point Kubota constructed an $n$-fold cover of $\mathrm{GL}_{2}$ and defined the higher theta functions to be residues of Eisenstein series on this $n$-fold cover.

One of the fundamental problems in the study of Kubota's higher theta functions and their generalizations to other covering groups is the determination of their Fourier coefficients. This problem remains open even in the case of $\mathrm{GL}_{2}^{(n)}$ an $n$-fold cover of $\mathrm{GL}_{2}$, where $n\geq4$. A few works in this area are \cite{P77I,P77II,S83,S93, KP84,BH89,EP92,CFH12,FG17,FG18, BH16}. For a nice survey see Hoffstein \cite{H93}.

While the determination of the Fourier coefficients of higher theta functions appears to be a deep and difficult problem, knowledge of these coefficients is important in several applications. We mention two. First, Patterson \cite{P78}, Heath-Brown-Patterson \cite{HBP79}, and more recently Dunn-Radziwill \cite{DR24} have used cubic theta functions on $\mathrm{GL}_{2}$ to study the distribution of cubic Gauss sums. Second, Friedberg-Ginzburg \cite{FG20} have generalized the classical theta correspondence between symplectic-orthogonal dual pairs to higher covers of symplectic and orthogonal groups. Their construction requires knowing the wave front set of higher theta functions of symplectic groups. Our main theorem proves the analogous local result, but the global result needed for Friedberg-Ginzburg's construction remains open. We note however that the local wave front set gives an upper bound on the global wave front set. We say a bit more about this at the end of this introduction.

In addition to the global considerations mentioned above the local theta representations (the local factors of the automorphic representation generated by the theta functions) are interesting in their own right. Under a Shimura correspondence (e.g. \cite{S04}), the unramified theta representation corresponds to the trivial representation of a suitable linear group. Thus these theta representations are fundamental and should be examined completely. 

As was already noted when mentioning the work of Friedberg-Ginzburg, one important invariant to consider is the wave front set. In this paper we compute the stable wave front set of local theta representations.

Now we introduce the notation needed to state our main theorem. Precise definitions can be found in the main body of the paper. Let $F$ be a $p$-adic field containing $\mu_{n}$ the full group of $n$-th roots of unity, where $\mathrm{GCD}(p,n)=1$. We let $G$ be the $F$-points of a connected reductive group and $\overline{G}$ be an $n$-fold cover of $G$ arising from a Brylinski-Deligne extension. A theta representation $\Theta$ for $\overline{G}$ will be the Langlands quotient of a certain exceptional principal series. For example, when $n=1$, $\Theta$ is the trivial representation of $\overline{G}=G$, and for the metaplectic double cover of the symplectic group $\Theta$ is the even Weil representation. One note on terminology. Our use of `theta representation' may differ from usage elsewhere in the literature. For example, in Ginzburg-Rallis-Sourdy \cite{GRS97} the local factors of their theta representations are not among the representations that we study in this paper. 

Let $T\subset G$ be a maximal torus and $W$ the Weyl group of $(G,T)$. For $W^{\prime}\subset W$ a reflection subgroup let $j_{W^{\prime}}^{W}(\mathrm{sgn})\in\mathrm{Irr}(W)$ denote $j$-induction of the sign representation of $W^{\prime}$. Let $\overline{T}$ be the full preimage in $\overline{G}$. We write $\mathrm{WF}^{s}(\Theta)$ for the stable wave front set of $\Theta$. For $\sigma\in \mathrm{Irr}(W)$ let $\mathcal{O}_{\mathrm{Spr}}(\sigma)$ be the nilpotent orbit part of the Springer correspondence applied to $\sigma$. Now we state our main theorem. 

\begin{theorem}[Theorem \ref{WFThetaThm}]\label{WFThetaThmIntro}Suppose that hypotheses \ref{hyp:exp}, \ref{hyp:lift} and \ref{hyp:bilin} hold.
This holds for example if the residue characteristic is larger than some constant depending on the absolute root datum of $\mathbf G$ and the ramification index of $F/\mathbb Q_p$. 

Fix $\pi^{\dagger}$ a distinguished genuine irreducible $\overline{T}$-representation and let $\nu:T\rightarrow \mathbb{C}^{\times}$ be an exceptional character. Let $\Theta$ be the theta representation of $\overline{G}$ attached to $\pi^{\dagger}$ and $\nu$. Then
\begin{equation}\label{WFThetaIntro}
\mathrm{WF}^{s}(\Theta)=\mathcal{O}_{\mathrm{Spr}}(j_{W_{\nu}}^{W}(\mathrm{sgn})).
\end{equation}
\end{theorem}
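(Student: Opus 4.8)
The plan is to follow the two-step strategy advertised in the abstract: reduce the computation of $\mathrm{WF}^s(\Theta)$ to a finite-group problem via Okada's theorem, and then solve the finite-group problem using Lusztig's formula for Kawanaka wave front sets together with an explicit description of how the pro-$p$ Iwahori-Hecke algebra acts on $\Theta$. The first thing I would do is set up the covering-group version of Okada's theorem (which should have been proved in an earlier section), which expresses $\mathrm{WF}^s(\Theta)$ in terms of the Kawanaka wave front set $\mathrm{WF}_{\AZ}$ of a certain representation of a finite reductive group obtained by restricting $\Theta$ to (a suitable quotient of) a parahoric subgroup of $\overline G$. Concretely, one restricts $\Theta$ to the pro-$p$ Iwahori subgroup, extracts the module over the pro-$p$ Iwahori-Hecke algebra $\mathcal H$, and the nontrivial geometric content of Okada's theorem says that the stable wave front set is computed orbit-by-orbit from the restrictions to maximal parahorics via the finite-group Kawanaka wave front set.

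Next I would identify precisely which $\mathcal H$-module $\Theta$ gives rise to. Here I would invoke the structural results of Gao-Gurevich-Karasiewicz and Wang on the pro-$p$ Iwahori-Hecke algebra of the cover $\overline G$: the exceptional principal series representation whose Langlands quotient is $\Theta$ has a well-understood $\mathcal H$-module structure, and the Langlands quotient corresponds to taking an appropriate (co)socle. The key point is that the exceptional character $\nu$ has stabilizer $W_\nu$ in $W$ (or rather in the relevant integral Weyl group for the cover), and the $\mathcal H$-module attached to $\Theta$ should be, up to the usual dictionary, the one induced from the sign character of the parabolic subalgebra attached to $W_\nu$ — this is exactly what produces $j$-induction $j^W_{W_\nu}(\mathrm{sgn})$ after passing through the Springer correspondence. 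I would then restrict this module to each maximal parahoric Hecke subalgebra, match it with a representation of the corresponding finite group of Lie type, and read off its unipotent support.

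The third step is to compute the Kawanaka wave front set of the resulting finite-group representation using Lusztig's formula, which expresses $\mathrm{WF}_{\AZ}$ in terms of the Fourier transform / families machinery and, crucially, behaves compatibly with $j$-induction and the Springer correspondence: the representation in play is (a twist of) the one attached to $j^{W_\nu}_{W_\nu}(\mathrm{sgn})$-type data, whose unipotent support is $\mathcal O_{\mathrm{Spr}}(j^W_{W_\nu}(\mathrm{sgn}))$. Assembling the maximal-parahoric answers through Okada's formula — and checking they glue to the single orbit $\mathcal O_{\mathrm{Spr}}(j^W_{W_\nu}(\mathrm{sgn}))$ rather than to a larger or reducible set — yields \eqref{WFThetaIntro}. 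Throughout, hypotheses \ref{hyp:exp}, \ref{hyp:lift}, \ref{hyp:bilin} are what guarantee that the exceptional principal series, the lift of the relevant tori/parahorics, and the bilinear-form data needed for Kawanaka's construction all behave as in the linear case.

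The main obstacle, I expect, is the second step: pinning down the $\mathcal H$-module structure of the theta representation on the nose for a general tame Brylinski-Deligne cover, and in particular verifying that the socle/Langlands-quotient operation on the principal series translates on the Hecke side exactly into the $\mathrm{sgn}$-isotypic piece for $W_\nu$. This is where the Gao-Gurevich-Karasiewicz and Wang results must be used with care, since the Hecke algebra for a cover is not the Iwahori-Hecke algebra of a linear group but a more general (possibly affine, with unequal parameters) algebra, and the combinatorics of exceptional characters and the block decomposition have to be controlled. A secondary difficulty is bookkeeping in Okada's theorem: making sure the finite-group restrictions are identified with the correct unipotent representations up to the relevant Langlands/Lusztig normalizations, so that the orbit assignment is unambiguous and the answer is genuinely a single orbit closure.
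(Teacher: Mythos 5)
Your overall architecture (Okada-type reduction to parahorics, then Lusztig/Kawanaka at the finite level) is the paper's, but there are two genuine gaps. First, your identification of the Hecke-module structure of $\Theta$ is not what actually happens and, as stated, would not feed correctly into Lusztig's formula. The point proved in Proposition \ref{ThetaBasics}\eqref{TProp6} (via the Jacquet-module description of $\Theta_U$ and Wang's Iwahori--Matsumoto presentation, Theorem \ref{IMPres}) is that the one-dimensional space $\Theta^{(\overline I,\chi)}$ is the \emph{index} character of the subalgebra $\mathscr H_\chi$; it is not ``induced from the sign character of the parabolic subalgebra attached to $W_\nu$'', and the Langlands quotient does not correspond to a $\mathrm{sgn}$-isotypic piece. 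The sign representation enters only afterwards, through Lusztig's/Taylor's formula: an irreducible constituent of $\mathrm{Ind}_B^G(\chi)$ whose $(B,\chi)$-module restricted to $\mathscr H$ is the index character is a \emph{semisimple} character (Proposition \ref{prop:semisimple}, which itself needs a regular-embedding argument since the finite group need not have connected center), and its Kawanaka wave front set is $\mathcal O_{\mathrm{Spr}}(j^{W}_{W_\chi}(\mathrm{sgn}))$ (Corollary \ref{KWFS}). Getting to that point also requires the multiplicity-free decomposition $\Theta^{P^+}\simeq\oplus_{\mathcal O}\sigma_{\mathcal O}$ of Proposition \ref{ThetaPmod} and the pullback to the universal tame cover $\overline G^u$ (Lemma \ref{WFPullBack}), since $\overline P/P^+$ itself need not be the $\mathfrak f$-points of a reductive group; none of this is supplied or replaced in your outline.

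Second, and more seriously, the final ``assembly'' is not bookkeeping: the group appearing in the $j$-induction at a vertex $y$ is $W^0_{\chi,y}$, i.e.\ the Weyl group of a root system $\Phi_{\chi,y}=\Phi^{Q,n}_{y+x}$ that varies with both the vertex and the character, and is generally \emph{not} $W_\nu$. Theorem \ref{StableWFS} only gives
\begin{equation*}
\mathrm{WF}^{s}(\Theta)(\mathbb C)=\max\bigl\{\mathrm{Sat}_{\mathbb G_y}^{\mathbb G}\bigl(\mathcal O_{\mathrm{Spr}}(j_{W(\Phi_y^{Q,n})}^{W(\Phi_y)}\mathrm{sgn})(\mathbb C)\bigr): y\in\mathscr V\bigr\},
\end{equation*}
(Proposition \ref{WFThetaRedtoComb}), and the identification of this maximum with $\mathcal O_{\mathrm{Spr}}(j^W_{W_\nu}(\mathrm{sgn}))(\mathbb C)$ is a separate theorem (Proposition \ref{prop:combo}): one must show the value is attained (this uses Lemma \ref{lm:Wx_Wy} at a hyperspecial vertex) \emph{and} that no other vertex, after saturation from $\mathbb G_y$ to $\mathbb G$, produces a strictly larger stable orbit. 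In the paper this upper bound occupies all of Section \ref{WFSComp}: partition combinatorics with $B$/$C$/$D$-collapses, parity distinctions in $n$, separate treatment of hyperspecial and non-hyperspecial vertices via weighted Dynkin diagrams, and tables for the exceptional types. Your proposal acknowledges that the parahoric answers must ``glue to a single orbit'' but offers no mechanism for this, so the proof is incomplete precisely at the step where most of the work lies.
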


We note that the right hand side of \eqref{WFThetaIntro} is explicitly computed in Gao-Liu-Tsai \cite{glt}.

Special cases of Theorem \ref{WFThetaThmIntro} were previously known. In type $A$ the theorem is due to Savin (unpublished) and Cai \cite{Cai19}. In type $B$, the case of $n=2$ is due to Bump-Friedberg-Ginzburg \cite{BFG03}. In type $C$, the case of $n=4$ is due to Leslie \cite{L19}; Friedberg-Ginzburg \cite{FG20} conjectured the result for odd-fold covers of $\mathrm{Sp}_{2r}$ and proved partial results. For type $G_{2}$, the case of $n=3$ was proved by Savin \cite{S93} as a byproduct of his construction of the global minimal representation. For $2$-fold covers Loke-Savin \cite{LS10} construct and study global theta functions. In particular, their work implies the case of the $2$-fold cover of $F_{4}$. As previously mentioned, Gao-Tsai \cite[Conjecture 2.5]{GT22} made the general conjecture of equation \eqref{WFThetaIntro} based on these previously known examples and their computation of the archimedean analog.

The proof of Theorem \ref{WFThetaThmIntro} breaks into three steps. 
\begin{enumerate}
	\item We apply Okada's Theorem to reduce the computation of $WF^{s}(\Theta)$ to computing Kawanaka wave front sets of representations of finite groups of Lie type. \label{Step1}
 %These representations arise from the fixed vectors of the pro-$p$ radical of parahoric subgroups.
	\item We apply Lusztig's formula for the Kawanaka wave front \cite{L92} set to reduce to a combinatorial computation.\label{Step2}
	\item We do a case by case analysis to complete the combinatorial computation.\label{Step3}
\end{enumerate}

 Next we outline of the contents of each section. Along the way we add more details to the three steps in our proof.

Section \ref{Prelim} consists of preliminary material. Aside from setting our notation we also extend Okada's Theorem \cite{okada} to the case of tame Brylinski--Deligne covers, Theorem \ref{StableWFS}.
It expresses the stable wave front set as the maximum over orbits derived from the Kawanaka wave front sets of $\Theta^{P^+}$ where $\overline P$ is a parahoric subgroup of $\overline G$ and $P^+$ denotes the the pro-$p$ radical of $\overline P$.

Section \ref{Theta} contains steps \eqref{Step1} and \eqref{Step2} in our proof of Theorem \ref{WFThetaThmIntro}. We begin in Subsection \ref{ThetaPropSS} with the definition of the theta representation $\Theta$. Propositions \ref{ThetaBasics} and \ref{ThetaPmod} collect the relevant properties of $\Theta$. 
These properties capture the structure of the $\overline P$-representation $\Theta^{P^+}$.
By Moy--Prasad theory the $\overline P/P^+$-representation $\Theta^{P^+}$ is a sum of consistuents of principal series. 
%The parahorics act through a particular constituent of some principal series of their reductive quotients. 
Knowing this is crucial to applying Lusztig's formula for the Kawanaka wave front set later in this section.

In Subsection \ref{KWFSGen}, we use Lusztig's formula to compute the Kawanaka wave front set of certain constituents of principal series of finite groups of Lie type.
The main result is Corollary \ref{KWFS}.

In Subsection \ref{KWFSComp} we use Corollary \ref{KWFS} to compute the Kawanaka wave front set of the $\overline P/P^+$-representation $\Theta^{P^+}$.
The main result of this subsection is Corollary \ref{ParaKWFS}.

%we translate between the language of parahorics and finite groups of Lie type so that we can apply Lusztig's formula. The main result of this subsection is Corollary \ref{ParaKWFS}.

We put everything together in Subsection \ref{CombRed} to complete Step \eqref{Step1} and \eqref{Step2}. Implementing these two steps yields the main result of this subsection Proposition \ref{WFThetaRedtoComb}. This proposition gives a formula for the stable wave front set of $\Theta$ as a maximum over nilpotent orbits derived from root theoretic data. 
%We add a few details. 
%Step \eqref{Step1} We apply Okada's Theorem (Theorem \ref{StableWFS}). This tells us that the stable wave front set of $\Theta$ is the maximum over orbits derived from Kawanaka wave front sets of particular constituents of principal series of finite groups of Lie type. The particular constituent of the principal series is determined by the properties of $\Theta$ from Subsection \ref{ThetaPropSS}. Step \eqref{Step2} These Kawanaka wave front sets are computed using Lusztig's formula, which we made explicit in Subsection \ref{KWFSComp}. 

Section \ref{WFSComp} contains the case by case analysis of Step \eqref{Step3} in the proof of Theorem \ref{WFThetaThmIntro}.

We conclude this introduction with a remark on the global significance of Theorem \ref{WFThetaThmIntro}. By general arguments and the work of Moeglin-Waldspurger \cite{MW87} the wave front set of a global automorphic representation is bounded above by the wave front set of any of its local factors. Thus Theorem \ref{WFThetaThmIntro} gives an upper bound for the stable wave front set of global theta representations. This generalizes a result of Friedberg-Ginzburg who proved the upperbound in type $C$ \cite[Theorem 1, (1)]{FG20}.

%%%%%%%%%%%%%%%%%%%%%%%%%%%%%%%%%%%%%%%%%%%%%%%
%%%%%%%%%%%%%%%%%%%%%%%%%%%%%%%%%%%%%%%%%%%%%%%
%Preliminaries
%%%%%%%%%%%%%%%%%%%%%%%%%%%%%%%%%%%%%%%%%%%%%%%
%%%%%%%%%%%%%%%%%%%%%%%%%%%%%%%%%%%%%%%%%%%%%%%

\subsection{Acknowledgements}
The authors would like to express their gratitude to Fan Gao for suggesting the problem of computing the wave front set of theta representations to the authors during the ``Arthur packets" conference at The Institute for Advanced Study in Mathematics in Hangzhou, China and sharing many helpful insights in our discussions.
The authors would also like to thank Gordan Savin for valuable comments on this pre-print.

\section{Preliminaries}\label{Prelim}

Fix a prime number $p$. Let $F$ be a $p$-adic field with ring of integers $\mathfrak{o}=\mathfrak{o}_{F}$, maximal ideal $\mathfrak{p}=\mathfrak{p}_{F}$, and residue field $\mathfrak{f}=\mathfrak{o}/\mathfrak{p}$. Let $q$ be the size of the residue field and fix $\varpi\in\mathfrak{p}$ a uniformizer. Let $|-|:F\rightarrow \mathbb{R}_{>0}$ be the absolute value of $F$ normalized so that $|\varpi|=q^{-1}$.

If $\bf{H}$ is a linear algebraic group defined over a commutative ring $R$, then we may write $H_{R}$ for $\mathbf{H}(R)$; and if $R=F$ we simply write $H$. If $H$ is a locally compact topological group we write $\delta_{H}$ for the modular character of $H$.

Let $\mathscr{G}$ be a smooth connected reductive linear algebraic group defined over $\mathfrak{o}$ with generic fiber $\bf{G}$, a split connected reductive group defined over $F$. We write $G_{\mathfrak{o}}$ for $\mathscr{G}(\mathfrak{o})$ and $G_{\mathfrak{f}}$ for the $\mathfrak{f}$-points of the special fiber of $\mathscr{G}$.

Let $\bf{T}$ be a maximal torus of $\bf{G}$. The pair $(\bf{G},\bf{T})$ determines a root datum \begin{equation*}
(X,\Phi,Y,\Phi^{\vee}),
\end{equation*}
where $X$ is the character lattice of $\bf{T}$, $Y$ is the cocharacter lattice of $\bf{T}$, $\Phi$ is the set of roots, and $\Phi^{\vee}$ is the set of coroots.

We write
\begin{equation*}
\langle-,-\rangle:X\times Y\rightarrow \mathbb{Z}
\end{equation*}
for the perfect pairing between $X$ and $Y$. Let $Y^{sc}:=\mathrm{span}_{\mathbb{Z}}(\Phi^{\vee})$.

Fix a Borel subgroup $\bf{B}$ containing $\bf{T}$. This determines a set of positive roots $\Phi^{+}\subset \Phi$ and a set of simple roots $\Delta\subset \Phi^{+}$. Let $\Phi^{-}:=-\Phi^{+}$ be the set of negative roots relative to $\bf{B}$. Let $\rho:=\frac{1}{2}\sum_{\alpha\in\Phi^{+}}\alpha$.

We fix a Chevalley-Steinberg system of pinnings $\{e_{\alpha}:\mathbf{G}_{a}\rightarrow \mathbf{G}|\alpha\in\Phi\}$ with respect to $(\mathbf{G},\mathbf{T})$.

Let $N(\bf{T})\subset \bf{G}$ be the normalizer of $\bf{T}$ in $\bf{G}$, and $W=W(G,T):=N(T)/T$ be the Weyl group of $\bf{T}$.

We write $W_{\mathrm{af}}:=W\ltimes Y^{sc}$ for the affine Weyl group, and $W_{\mathrm{ea}}:=W\ltimes Y$ for the extended affine Weyl group. The set of affine roots $\Phi_{\mathrm{af}}$ consists of the affine functions on the space $Y\otimes \mathbb{R}$ of the form $\alpha+k$, where $\alpha\in\Phi$ and $k\in\mathbb{Z}$. For each $a=\alpha+k\in \Phi_{\mathrm{af}}$, let $H_{a}$ be the vanishing  hyperplane of $a$ and let $s_{a}$ denote the reflection of $Y\otimes \mathbb{R}$ fixing the hyperplane $H_{a}$ defined by $s_{a}(y)=y-(\langle\alpha,y\rangle+k)\alpha^{\vee}$. If we choose a connected component $\mathscr{A}_{0}$ of $Y\otimes \mathbb{R}-\cup_{a\in\Phi_{\mathrm{af}}}H_{a}$, then $W_{\mathrm{af}}$ is a Coxeter group with distinguished generators given by the reflections $s_{a}$, $a\in\Phi_{\mathrm{af}}$, such that $H_{a}$ intersects the closure of the fixed connected component $\mathscr{A}_{0}$.

\subsection{Covering Groups}

We work with a family of groups that arise from Brylinksi-Deligne extensions \cite{BD01}. This will be the same family that appears in \cite{GGK}.

From \cite{W14} (see also \cite[Section 2.6]{GG18}), a Brylinski-Deligne $\mathbf{K}_{2}$-extension of $\mathbf{G}$ 
\begin{equation}\label{CExt}
\mathbf{K}_{2}\hookrightarrow \overline{\mathbf{G}}\stackrel{\wp}{\twoheadrightarrow} \mathbf{G}
\end{equation}
can be incarnated by the pair $(D,\eta)$, where
\begin{itemize}
\item $D:Y\times Y\rightarrow \mathbb{Z}$ is a bilinear form (not necessarily symmetric) such that $Q(y):=D(y,y)$ is a $W$-invariant quadratic form;
\item $\eta:Y^{sc}\rightarrow F^{\times}$ is a group homomorphism.
\end{itemize}
In this paper, as in \cite{GGK}, we assume that $\eta$ is trivial.

For any algebraic subgroup $\mathbf{H}\subset \mathbf{G}$ we write $\overline{\mathbf{H}}$ for the pullback of \eqref{CExt} with respect to the inclusion $\mathbf{H}\subset \bf{G}$.

Any unipotent subgroup of $\mathbf{G}$ has a canonical splitting of the exact sequence \eqref{CExt}. For each $\alpha\in\Phi$ let $\overline{e}_{\alpha}:\mathbf{G}_{a}\rightarrow \overline{\mathbf{G}}$ be the canonical splitting of the root subgroup associated to $\alpha$. With this, for any $\alpha\in \Phi$ and $a\in\mathbf{G}_{m}$ we write
\begin{itemize}
\item $\overline{w}_{\alpha}(a):=\overline{e}_{\alpha}(a)\overline{e}_{-\alpha}(-a^{-1})\overline{e}_{\alpha}(a)\in \overline{N}(\mathbf{T})$;
\item $\overline{h}_{\alpha}(a):=\overline{w}_{\alpha}(a)\overline{w}_{\alpha}(-1)\in \overline{\mathbf{T}}$
\end{itemize}

The quotient map $\overline{\mathbf{T}}\stackrel{\wp}{\twoheadrightarrow} \mathbf{T}$ admits a section $\mathbf{s}$ such that for any $a_{1},a_{2}\in \mathbf{G}_{m}$ and $y_{1},y_{2}\in Y$ 
\begin{equation}
\mathbf{s}(y_{1}(a_{1}))\mathbf{s}(y_{1}(a_{1}))=\{a_{1},a_{2}\}^{D(y_{1},y_{2})}\mathbf{s}(y_{1}(a_{1})y_{2}(a_{2})),
\end{equation}
where $\{a_{1},a_{2}\}\in K_{2}$. (See \cite[Section 0.N.5]{BD01}.) Since  $\eta$ is trivial \cite[Section 3.3]{GG18} implies for all $\alpha\in \Delta$
\begin{equation}\label{CorootSec}
\overline{h}_{\alpha}(a)=\mathbf{s}(h_{\alpha}(a)).
\end{equation}

Since $F$ is a $p$-adic field with finite residue field of order $q$, it follows that $F$ contains $\mu_{q-1}$ the full group of $(q-1)$-st roots of unity. Let $n\in \mathbb{Z}_{\geq 0}$ be such that $n$ divides $q-1$. Note that $\mathrm{GCD}(n,p)=1$.

In the central extension \eqref{CExt} we can take $F$-points and push out by Hilbert symbols to construct the following commutative diagram. 

\begin{equation}\label{CExtsAll}
    \begin{tikzcd}
        \mathbf{K}_{2}(F) \ar[r, hook] \ar[d, two heads] & \overline{\mathbf{G}}(F)\ar[r, two heads] \ar[d, two heads] & G \ar[d, equal] \\
        \mu_{q-1}  \ar[r, hook] \ar[d,two heads,"(-)^{\frac{q-1}{n}}"] & \overline{G}^{u} \ar[r, two heads,"\wp^{u}"] \ar[d,two heads, "\overline{\wp}"] & G\ar[d,equal]\\
        \mu_{n}\ar[r,hook]& \overline{G}\ar[r,two heads, "\wp"]& G.
    \end{tikzcd}
\end{equation}

Specifically, the second row is the push out of the first row along the unramified Hilbert symbol $(-,-)_{\mathrm{un}}:\mathbf{K}_{2}(F)\rightarrow \mu_{q-1}$; the third row is the push out by the $n$-th order Hilbert symbol $(-,-)_{n}:\mathbf{K}_{2}(F)\rightarrow \mu_{n}$. The objective of this work is to study certain representations on the group $\overline{G}$. But at times it will be beneficial to pull back to $\overline{G}^{u}$, which exhibits better algebraic properties (see Subsection \ref{ParaSubs}).

For any $H\subset G$, we write $\overline{H}:=\wp^{-1}(H)$ and $\overline{H}^{u}:=(\wp^{u})^{-1}(H)$. 

Let $B_{Q}:Y\otimes Y\rightarrow \mathbb{Z}$ be the $W$-invariant bilinear form attached to $Q$, defined by $B_{Q}(y,z):=Q(y+z)-Q(y)-Q(z)$. The commutator of $\overline{T}$ descends to $T$ and defines a bimultiplicative map
\begin{equation*}
[-,-]:T\times T\rightarrow \mu_{n}.
\end{equation*}

 Fix an embedding $\varepsilon:\mu_{n}\hookrightarrow\mathbb{C}^{\times}$. If $\pi$ is a representation of $\overline{G}$ such that $\mu_{n}\subset \overline{G}$ acts through the character $\varepsilon$ we say that $\pi$ is $\varepsilon$-genuine. Sometimes we abuse notation and use $\varepsilon$ to identify $\mu_{n}$ with the $n$-th roots of unity in $\mathbb{C}^{\times}$.

 By \cite[Theorem 6.6]{GG18} and because $\eta$ is trivial, $\overline{T}$ admits an $\varepsilon$-genuine representation $\pi^{\dagger}$ determined by a distinguished $\varepsilon$-genuine character $\chi^{\dagger}:Z(\overline{T})\rightarrow \mathbb{C}^{\times}$. Note, $\mathrm{dim}\pi^{\dagger}$ is equal to the index of any maximal abelian subgroup in $\overline{T}$.

 We fix $\pi^{\dagger}$ (equivalently $\chi^{\dagger}$) for the remainder of this work.

\subsection{Modified Root Datum}

For $\alpha\in \Phi$ define 

\begin{align*}
    n_{\alpha}:=&\frac{n}{\mathrm{GCD}(n,Q(\alpha^{\vee}))};\\ 
    \alpha_{Q,n}:=&\frac{1}{n_{\alpha}}\alpha;\\
    \alpha_{Q,n}^{\vee}:=&n_{\alpha}\alpha^{\vee};\\
    Y_{Q,n}:=&\{y\in Y|B_{Q}(y,y^{\prime})\in n\mathbb{Z}\text{ for all }y^{\prime}\in Y\};\\
    \Phi_{Q,n}^{\vee}:=&\{\alpha_{Q,n}^{\vee}|\alpha\in \Phi\};\\
    X_{Q,n}:=&\mathrm{Hom}(Y_{Q,n},\mathbb{Z});\\
    \Phi_{Q,n}:=&\{\alpha_{Q,n}|\alpha\in\Phi\}.
\end{align*}
The modified root datum attached to the covering group $(\overline{G},\overline{T})$ is
\begin{equation*}
(X_{Q,n},\Phi_{Q,n},Y_{Q,n},\Phi_{Q,n}^{\vee}).
\end{equation*}

Let $\mathscr{X}:=Y/Y_{Q,n}$. Since $W$ preserves $Y_{Q,n}$, the Weyl group $W$ acts on $\mathscr{X}$.Thus we have an action of $W\ltimes \mathscr{X}$ on $\mathscr{X}$, where $\mathscr{X}$ acts on itself by translation. Pulling back by the natural surjection $W_{\mathrm{ea}}\simeq W\ltimes Y\twoheadrightarrow W\ltimes \mathscr{X}$ we get an action of the extended affine Weyl group $W_{\mathrm{ea}}$ on $\mathscr{X}$. 

\subsection{Tame Covers}

Since $n$ divides $q-1$, the covering group $\overline{G}$ is called a tame cover of $G$ and in this subsection we collect some properties special to tame covers.

\begin{comment}
Since $\overline{G}$ is tame and $\eta=1$, \cite[Theorem 4.2]{GG18} implies that there exists a splitting of the hyperspecial maximal compact subgroup
\begin{equation*}
    \mathscr{S}:G_{\mathfrak{o}}\rightarrow \overline{G}.
\end{equation*}
\end{comment}

Since $p$ does not divide $n$ the bimultiplicative map $[-,-]:T\times T\rightarrow \mu_{n}$ is trivial on $T_{\mathfrak{o}}^{+}$, the maximal pro-$p$ subgroup of $T_{\mathfrak{o}}$, and thus factors through $T_{\mathfrak{o}}/T_{\mathfrak{o}}^{+}\simeq T_{\mathfrak{f}}$, where the isomorphism is given by mod $p$ reduction. Thus we can define a map
\begin{equation*}
    \varphi:T\rightarrow \mathrm{Hom}(T_{\mathfrak{f}},\mu_{n})
\end{equation*}
by $\varphi(t)(s):=[t,s^{\prime}]$, where $s^{\prime}\in T_{\mathfrak{o}}$ is any lift of $s\in T_{\mathfrak{f}}$.

By restriction we get a homomorphism $Y\rightarrow \mathrm{Hom}(T_{\mathfrak{f}},\mu_{n})$, defined by $y\mapsto \varphi(y(\varpi))$. Note that this is independent of the choice of uniformizer. The kernel is $Y_{Q,n}$. Thus we get an injective map, which we also call $\varphi$
\begin{equation*}
    \varphi:\mathscr{X}\hookrightarrow \mathrm{Hom}(T_{\mathfrak{f}},\mu_{n}).
\end{equation*}

For tame covers, the group $A:=\overline{T}_{\mathfrak{o}}Z(\overline{T})\subset\overline{T}$ is a maximal abelian subgroup of $\overline{T}$. From this it follows that $\mathrm{dim}\pi^{\dagger}=|\mathscr{X}|$.

Let $\mathscr{Z}=\mathscr{Z}_{\pi^{\dagger}}\subseteq \mathrm{Hom}(\overline{T}_{\mathfrak{o}},\mathbb{C}^{\times})$ be the set of characters of $\overline{T}_{\mathfrak{o}}$ contained in $\pi^{\dagger}$. Note that $\mathscr{X}$ acts on $\mathscr{Z}$ via $y\mapsto (\chi\mapsto \varphi(y)\cdot\chi).$ Moreover, since $\pi^{\dagger}$ is Weyl group invariant we see that $\mathscr{Z}$ admits an action of $W\ltimes \mathscr{X}$.

The next lemma says $\mathscr{Z}$ is a torsor under $\mathscr{X}$. The proof is straightforward and omitted. Let $\varepsilon_{*}:\mathrm{Hom}(T_{\mathfrak{o}},\mu_{n})\rightarrow \mathrm{Hom}(T_{\mathfrak{o}},\varepsilon(\mu_{n}))$ be defined by $f\mapsto \varepsilon\circ f$.

\begin{lemma}\label{ZTorUnderX}
    Fix any $\chi_{0}\in \mathscr{Z}$. 
    The map $\mathscr{Z}\rightarrow \mathrm{Im}\varepsilon_{*}\circ\varphi\subset \mathrm{Hom}(T_{\mathfrak{o}},\varepsilon(\mu_{n}))$ defined by $\chi\mapsto \chi\cdot (\chi_{0})^{-1}$ is an $\mathscr{X}$-equivariant bijection. (On the domain and codomain the action of $\mathscr{X}\simeq\mathrm{Im}\varphi$ is by multiplication of characters.) %\textbf{One can define a twisted action of $W$ on $\mathrm{Im}\varphi$ to make this $W\ltimes \mathscr{X}$-equivariant, but maybe we do not need this.}
\end{lemma}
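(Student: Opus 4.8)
The plan is to apply Clifford theory to the restriction of $\pi^{\dagger}$ to the open normal subgroup $\overline{T}_{\mathfrak{o}}$, and then to identify the resulting $\overline{T}$-conjugacy orbit of characters with a coset of $\mathrm{Im}\,\varepsilon_{*}\circ\varphi$, on which the $\mathscr{X}$-action is visibly simply transitive.

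First I would record the structural input. Since $T$ is abelian, $T_{\mathfrak{o}}$ is an open normal subgroup, hence $\overline{T}_{\mathfrak{o}}=\wp^{-1}(T_{\mathfrak{o}})$ is open and normal in $\overline{T}$; moreover $\overline{T}_{\mathfrak{o}}$ is abelian, being contained in the maximal abelian subgroup $A=\overline{T}_{\mathfrak{o}}Z(\overline{T})$ of $\overline{T}$ recorded above for tame covers (equivalently, because the commutator pairing $[-,-]$ is trivial on $T_{\mathfrak{o}}\times T_{\mathfrak{o}}$, the tame Hilbert symbol of two units being trivial). As $\pi^{\dagger}$ is finite-dimensional and irreducible and $\overline{T}_{\mathfrak{o}}$ acts smoothly, $\pi^{\dagger}|_{\overline{T}_{\mathfrak{o}}}$ is a finite direct sum of characters, and Clifford theory for the open normal subgroup $\overline{T}_{\mathfrak{o}}$ shows that these characters form a single $\overline{T}$-conjugacy orbit; by definition this orbit is exactly $\mathscr{Z}$. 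In particular $\mathscr{Z}\neq\emptyset$, so the statement makes sense.

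Next I would compute the conjugation action. For $t\in\overline{T}$ and $x\in\overline{T}_{\mathfrak{o}}$ one has $t^{-1}xt=x\cdot[x,t]$ with $[x,t]\in[\overline{T},\overline{T}]\subseteq\mu_{n}$, and this commutator is the image of $[\wp(x),\wp(t)]$ under $[-,-]\colon T\times T\to\mu_{n}$. Since $\pi^{\dagger}$ is $\varepsilon$-genuine, every $\chi\in\mathscr{Z}$ restricts to $\varepsilon$ on $\mu_{n}$, so $({}^{t}\chi)(x)=\chi(x)\cdot\varepsilon\big([\wp(x),\wp(t)]\big)$. Unwinding the definition of $\varphi$, the character $x\mapsto\varepsilon([\wp(x),\wp(t)])$ of $\overline{T}_{\mathfrak{o}}$ is the inflation of $(\varepsilon_{*}\circ\varphi)(\wp(t))^{\pm1}$ along $\overline{T}_{\mathfrak{o}}\twoheadrightarrow T_{\mathfrak{o}}\twoheadrightarrow T_{\mathfrak{f}}$; and because $\varphi$ is trivial on $T_{\mathfrak{o}}$ and factors through $T\twoheadrightarrow T/T_{\mathfrak{o}}\cong Y\twoheadrightarrow\mathscr{X}$, as $t$ ranges over $\overline{T}$ this twisting character ranges exactly over (the inflations of) $\mathrm{Im}\,\varepsilon_{*}\circ\varphi$, and the $\overline{T}$-action on $\mathscr{Z}$ coincides with the $\mathscr{X}$-action $y\mapsto(\chi\mapsto\varphi(y)\cdot\chi)$ fixed before the lemma. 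Consequently $\mathscr{Z}=\chi_{0}\cdot\mathrm{Im}\,\varepsilon_{*}\circ\varphi$ for any $\chi_{0}\in\mathscr{Z}$ (note each $\chi\cdot\chi_{0}^{-1}$ is trivial on $\mu_{n}$, hence descends to a character of $T_{\mathfrak{o}}$ valued in $\varepsilon(\mu_{n})$).

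Finally, the map $\chi\mapsto\chi\cdot(\chi_{0})^{-1}$ is precisely the translation carrying this coset onto the subgroup $\mathrm{Im}\,\varepsilon_{*}\circ\varphi$, hence a bijection, and by construction it intertwines the $\mathscr{X}$-action on $\mathscr{Z}$ with the multiplication action of $\mathscr{X}$ on $\mathrm{Im}\,\varepsilon_{*}\circ\varphi$ through $\varepsilon_{*}\circ\varphi$. Since $\varphi\colon\mathscr{X}\hookrightarrow\mathrm{Hom}(T_{\mathfrak{f}},\mu_{n})$ is injective (established above) and $\varepsilon$ is injective, $\varepsilon_{*}\circ\varphi$ is an isomorphism onto its image, so this multiplication action is simply transitive; this yields the torsor claim. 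I expect the only delicate point to be the bookkeeping in the third paragraph — matching the conjugation action on characters with the pairing $\varphi$ up to the correct inverse, and verifying that it agrees on the nose with the $W\ltimes\mathscr{X}$-action fixed in the text — but this is a routine computation with the commutator pairing and the definition of $\varphi$.
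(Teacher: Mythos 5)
Your proof is correct, and since the paper omits this proof as ``straightforward,'' your route --- Clifford theory for the compact open abelian normal subgroup $\overline{T}_{\mathfrak{o}}\subset\overline{T}$ to see that $\mathscr{Z}$ is a single conjugation orbit, together with the commutator computation identifying that orbit with the coset $\chi_{0}\cdot\mathrm{Im}(\varepsilon_{*}\circ\varphi)$ and the injectivity of $\varphi$ on $\mathscr{X}$ already established in the text --- is exactly the intended argument. The only bookkeeping point you flag, whether conjugation by $t$ twists by $\varepsilon_{*}\varphi(\wp(t))$ or by its inverse, is immaterial because $\mathrm{Im}(\varepsilon_{*}\circ\varphi)$ is a subgroup, so the coset identification, bijectivity, and $\mathscr{X}$-equivariance are unaffected.
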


%Let $\mathrm{Hom}_{\varepsilon}(\overline{T}_{\mathfrak{o}},\mu_{n})$ be the set of $\varepsilon$-genuine characters of $\overline{T}_{\mathfrak{o}}$ valued in $\varepsilon(\mu_{n})\subset \mathbb{C}^{\times}$. \textbf{Better place for this?} 

We end this subsection with a lemma showing that the elements of $\mathscr{Z}$ are quite special.

\begin{lemma}\label{SimpTorChar}
Let $\chi\in \mathscr{Z}$. There is a coweight $y_{\chi}\in Y^{sc}\otimes \mathbb{Q}$ such that for all $\alpha\in\Phi$
\begin{equation*}
    \chi(\overline{h}_{\alpha}(u))= (\varpi,u)_{n}^{B_{Q}(y_{\chi},\alpha^{\vee})}.
\end{equation*}
\end{lemma}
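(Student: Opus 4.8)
The plan is to show that any $\chi\in\mathscr{Z}$ is, up to a genuine central character twist that is forced, determined by a single rational coweight via Hilbert symbols on the torus elements $\overline h_\alpha(u)$. First I would recall from Lemma \ref{ZTorUnderX} that $\mathscr{Z}$ is a torsor under $\mathscr{X}=Y/Y_{Q,n}$, with the action given by multiplication by $\varphi(y)$. So it suffices to prove the claimed formula for one distinguished $\chi_0\in\mathscr{Z}$ — say the one coming from the distinguished character $\chi^\dagger$ restricted to $\overline T_{\mathfrak o}$ — and then check that the action of $y\in\mathscr{X}$ both sides transforms the same way: passing from $\chi_0$ to $\varphi(y)\cdot\chi_0$ should shift $y_{\chi_0}$ by something that pairs with $\alpha^\vee$ via $B_Q$ exactly as $\varphi(y)(\overline h_\alpha(u))$ does. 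By definition $\varphi(y)(t')=[y(\varpi),t']$ and relation \eqref{Rele}/\eqref{Reli}-type bimultiplicativity gives $[y(\varpi),\alpha^\vee(u)]=(\varpi,u)_n^{B_Q(y,\alpha^\vee)}$ (using \eqref{CorootSec} to write $\overline h_\alpha(u)=\mathbf s(h_\alpha(u))=\mathbf s(\alpha^\vee(u))$), so the $\mathscr{X}$-equivariance reduces exactly to the additivity $B_Q(y_{\chi_0}+y,\alpha^\vee)=B_Q(y_{\chi_0},\alpha^\vee)+B_Q(y,\alpha^\vee)$.

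Next I would pin down $y_{\chi_0}$ for the base point. The distinguished character $\chi^\dagger$ on $Z(\overline T)$ is the one whose restriction is ``as trivial as possible,'' and on $\overline T_{\mathfrak o}$ the relevant values are $\chi_0(\overline h_\alpha(u))$ for $u\in\mathfrak o^\times$ and for $u=\varpi$. Since $\overline h_\alpha(u)=\mathbf s(\alpha^\vee(u))$, and since $\chi_0$ is a character of the abelian group $\overline T_{\mathfrak o}Z(\overline T)$, evaluating on $\overline h_\alpha(\varpi)$ and $\overline h_\alpha(v)$, $v\in\mathfrak o^\times$, I would use the cocycle relation \eqref{Relc}/\eqref{Reli} to express $\chi_0(\overline h_\alpha(u))$ as a product of Hilbert symbols $(\,\cdot\,,\,\cdot\,)_n$ raised to powers involving $D$ and hence $B_Q$ and $Q$ on $\alpha^\vee$. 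The target is that $\chi_0(\overline h_\alpha(u))=(\varpi,u)_n^{B_Q(y_{\chi_0},\alpha^\vee)}$ for a rational coweight $y_{\chi_0}\in Y^{sc}\otimes\mathbb Q$; the coweight $y_{\chi_0}$ should be read off as (a multiple of) $\rho^\vee$-type data attached to $Q$, i.e.\ the coweight characterized by $B_Q(y_{\chi_0},\alpha^\vee)=-Q(\alpha^\vee)$ (or a similar normalization, the exact sign/scaling to be fixed by comparing with the definition of the exceptional/distinguished character). That such a rational coweight exists uses that $B_Q$ is a $W$-invariant symmetric bilinear form, hence nondegenerate after extending scalars to $\mathbb Q$ on $Y^{sc}\otimes\mathbb Q$ (away from the radical, which pairs trivially with all coroots anyway), so the linear functional $\alpha^\vee\mapsto -Q(\alpha^\vee)$ on $\mathrm{span}(\Phi^\vee)$ is represented by a unique element of $Y^{sc}\otimes\mathbb Q$.

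Having done both, I would combine: for general $\chi=\varphi(y)\cdot\chi_0$ set $y_\chi:=y_{\chi_0}+y\in Y^{sc}\otimes\mathbb Q$ (note $y\in Y$ need not lie in $Y^{sc}$, but $Y/Y^{sc}$ pairs trivially with every $\alpha^\vee$ under $B_Q$ after tensoring with $\mathbb Q$, so we may adjust the representative to lie in $Y^{sc}\otimes\mathbb Q$ without changing any $B_Q(y_\chi,\alpha^\vee)$); then
\[
\chi(\overline h_\alpha(u))=\varphi(y)(\overline h_\alpha(u))\cdot\chi_0(\overline h_\alpha(u))=(\varpi,u)_n^{B_Q(y,\alpha^\vee)}\cdot(\varpi,u)_n^{B_Q(y_{\chi_0},\alpha^\vee)}=(\varpi,u)_n^{B_Q(y_\chi,\alpha^\vee)},
\]
as desired. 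The main obstacle I anticipate is the base-case computation: correctly identifying $\chi^\dagger$ on the elements $\overline h_\alpha(u)$ and tracking the Hilbert-symbol exponents through the Brylinski-Deligne cocycle relations \eqref{Relc}--\eqref{Relh} (especially the $(-t_1,-t_2^{-1})_n^{Q(\alpha^\vee)}$ and $(t,c)_n^{Q(\alpha^\vee)}$ correction terms, and the appearance of $c(\alpha,\beta)\in\{\pm1\}$) to verify the exponent is exactly $B_Q(y_{\chi_0},\alpha^\vee)$ and not off by a term involving $Q(\alpha^\vee)$ alone; a secondary subtlety is checking that the answer is genuinely independent of the chosen uniformizer $\varpi$, which follows because changing $\varpi$ multiplies by $(\mathfrak o^\times,u)_n$-type symbols that vanish since $n\mid q-1$ and the residue characteristic is prime to $n$.
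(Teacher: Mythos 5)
Your reduction via the torsor structure is sound as far as it goes: the equivariance computation $\varphi(y)(\overline h_\alpha(u))=[y(\varpi),\alpha^\vee(u)]=(\varpi,u)_n^{B_Q(y,\alpha^\vee)}$ is correct, $B_Q$ is additive in the first argument, and replacing $y\in Y$ by its projection to $Y^{sc}\otimes\mathbb Q$ changes no pairing $B_Q(-,\alpha^\vee)$, so the lemma for one element of $\mathscr Z$ does imply it for all of $\mathscr Z$ via Lemma \ref{ZTorUnderX}. This is a genuinely different organization from the paper, which does not single out a base point at all.

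The gap is the base case, and it is not a technicality you can defer: it is the entire content of the lemma. By Lemma \ref{BQtoCP}, $B_Q(y_\chi,\alpha^\vee)=Q(\alpha^\vee)\langle\alpha,y_\chi\rangle$, so asserting the formula with $y_\chi$ a \emph{coweight} is essentially equivalent to asserting that the exponent $m_\alpha$ in $\chi(\overline h_\alpha(u))=(\varpi,u)_n^{m_\alpha}$ is divisible by $Q(\alpha^\vee)$ for every $\alpha$, with the integers $m_\alpha/Q(\alpha^\vee)$ fitting together into a single linear functional. That divisibility cannot be extracted from the Brylinski--Deligne cocycle relations alone, because the cocycle only constrains, rather than determines, the values of a character of $\overline T_{\mathfrak o}$ lying in $\pi^\dagger$; moreover there is no canonical ``restriction of $\chi^\dagger$ to $\overline T_{\mathfrak o}$'' to use as a base point, since $\overline h_\alpha(u)\notin Z(\overline T)$ for general $u\in\mathfrak o^\times$ and the values on these elements depend on a choice of extension of $\chi^\dagger$ to the maximal abelian subgroup $A=\overline T_{\mathfrak o}Z(\overline T)$. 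Your proposed normalization $B_Q(y_{\chi_0},\alpha^\vee)=-Q(\alpha^\vee)$ is therefore an unjustified guess (and, as stated for all $\alpha\in\Phi$, inconsistent, since $\alpha$ and $-\alpha$ cannot both pair to $-1$). The paper supplies exactly this missing input by citing \cite[Section 6.4]{GG18}: for every $\chi\in\mathscr Z$ and every $\alpha\in\Phi$ one has $\chi(\overline h_\alpha(u))=\varepsilon((\varpi,u)_n^{Q(\alpha^\vee)k_\alpha})$ with $k_\alpha\in\mathbb Z$; it then builds $y_\chi$ explicitly from the $k_\alpha$ for simple roots using fundamental weights and the normalized $W$-invariant form $(-|-)$ with $B_Q=m(-|-)$. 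Without that input (or an argument replacing it), your proof does not get off the ground; with it, the torsor reduction becomes unnecessary, since the paper's construction already applies to an arbitrary $\chi\in\mathscr Z$.
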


\begin{proof}
    For the proof we will assume that $\Phi$ is irreducible. In general one applies the following argument on each irreducible component.

    We begin with some preliminaries. Because $\Phi$ is irreducible there is a unique nondegenerate $W$-invariant bilinear form $(-|-)$ on $Y^{sc}\otimes \mathbb{Q}$ such that for any short coroot $\alpha^{\vee}$ we have $(\alpha^{\vee}|\alpha^{\vee})=1$. By the uniqueness, there exists $m\in \mathbb{Z}$ such that $B_{Q}(-,-)=m(-|-)$. Since $B_{Q}(\alpha^{\vee},\alpha^{\vee})=2Q(\alpha^{\vee})$, we see that $m$ is even, and for all $\alpha\in \Phi$ we have 
    \begin{equation*}
        Q(\alpha^{\vee})=\frac{m}{2}(\alpha^{\vee}|\alpha^{\vee}).
    \end{equation*}
    
    Define $X^{sc}:=\mathrm{Hom}(Y^{sc},\mathbb{Z})$. 
    By \cite[Ch VI, Section 1, no. 1, Lemma 2]{Bour}, the map 
    $$\iota:X^{sc}\otimes \mathbb{Q}\rightarrow Y^{sc}\otimes \mathbb{Q},\quad \alpha\mapsto \frac{2\alpha^{\vee}}{(\alpha^{\vee}|\alpha^{\vee})}$$
    is an isomorphism of $\mathbb{Q}$ vector spaces. 

    For each $\alpha\in \Delta$ we consider $\omega_{\alpha}\in X^{sc}$, the fundamental weight attached to $\alpha$. There exists $y_{\alpha}\in Y^{sc}\otimes \mathbb{Q}$ such that $\iota(\omega_{\alpha})=y_{\alpha}$.
    In particular, for all $y\in Y$, we have 
    $$(y_{\alpha}|y)=\langle\omega_{\alpha},y\rangle\in \mathbb{Z}.$$

    Now we turn to the proof of the lemma.
    
    By \cite[Section 6.4]{GG18}, for all $\alpha\in \Phi$ there exists $k_{\alpha}\in \mathbb{Z}$ such that 
    \begin{equation*}
        \chi(\overline{h}_{\alpha}(u))=\varepsilon((\varpi,u)_{n}^{Q(\alpha^{\vee})k_{\alpha}}).
    \end{equation*}
    
    Consider $x:=\sum_{\alpha\in\Delta}k_{\alpha}\frac{(\alpha^{\vee}|\alpha^{\vee})}{2}\omega_{\alpha}\in X^{sc}$ and let $y_{\chi}:=\iota(x)$. A direct calculation shows that $y_{\chi}$ is a coweight.

    Finally the lemma follows because
    \begin{align*}
        B_{Q}(y_{\chi},-)=&\sum_{\alpha\in \Delta}k_{\alpha}\frac{m}{2}(\alpha^{\vee}|\alpha^{\vee})(y_{\alpha}|-)\\
        =&\sum_{\alpha\in \Delta}k_{\alpha}Q(\alpha^{\vee})(y_{\alpha}|-)\\
        =&\sum_{\alpha\in \Delta}k_{\alpha}Q(\alpha^{\vee})\omega_{\alpha}.
    \end{align*}
\end{proof}

Sometime it is useful to rewrite the expression in Lemma \ref{SimpTorChar} using the following identity.

\begin{lemma}\label{BQtoCP}
    Let $\alpha\in\Phi$ and $y\in Y$. Then $B_{Q}(y,\alpha^{\vee})=Q(\alpha^{\vee})\langle\alpha,y\rangle$.
\end{lemma}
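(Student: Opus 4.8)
The plan is to reduce to the generic case $\langle\alpha,y\rangle\neq 0$ and then invoke the $W$-invariance of $Q$. For the reduction, note that both sides of the asserted identity, $B_Q(y,\alpha^\vee)$ and $Q(\alpha^\vee)\langle\alpha,y\rangle$, are $\mathbb Z$-linear in $y$ (the former by bilinearity of $B_Q$, the latter by linearity of $\langle\alpha,-\rangle$). Hence it is enough to treat those $y$ with $\langle\alpha,y\rangle\neq 0$, because any $y\in Y$ is a difference $y=(y+N\alpha^\vee)-N\alpha^\vee$ of two such elements, with $N\in\{1,2\}$ chosen so that $\langle\alpha,N\alpha^\vee\rangle=2N$ and $\langle\alpha,y+N\alpha^\vee\rangle=\langle\alpha,y\rangle+2N$ are both nonzero (at least one of $N=1,2$ works, since these two candidate values are distinct).

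So assume $c:=\langle\alpha,y\rangle\neq 0$. The reflection $s_\alpha\in W$ acts on $Y$ by $s_\alpha(y)=y-c\,\alpha^\vee$, and $W$-invariance of $Q$ gives $Q(s_\alpha y)=Q(y)$. Expanding the left side using the defining relation $Q(z+z')=Q(z)+B_Q(z,z')+Q(z')$ together with bilinearity of $B_Q$ yields
\begin{equation*}
Q(y)=Q(y-c\,\alpha^\vee)=Q(y)-c\,B_Q(y,\alpha^\vee)+c^2\,Q(\alpha^\vee).
\end{equation*}
Cancelling $Q(y)$ and dividing by $c\neq 0$ gives $B_Q(y,\alpha^\vee)=c\,Q(\alpha^\vee)=Q(\alpha^\vee)\langle\alpha,y\rangle$, as claimed.

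I do not anticipate any genuine obstacle: the whole argument is a one-line computation plus the elementary linearity reduction, whose only purpose is to license dividing by $\langle\alpha,y\rangle$ (and thereby avoid treating $\langle\alpha,y\rangle=0$ as a separate case). If one preferred, that degenerate case could instead be dispatched directly, using that $B_Q$ is $W$-invariant and that $s_\alpha$ sends $\alpha^\vee$ to $-\alpha^\vee$, but the linearity route is cleaner and is the one I would write up.
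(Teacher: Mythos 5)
Your proof is correct. The paper's own proof is a one-liner built on the same underlying idea (apply the reflection $s_{\alpha}$ and use Weyl invariance), but it routes through the $W$-invariance of the bilinear form rather than of $Q$ itself: from $B_{Q}(s_{\alpha}y,s_{\alpha}\alpha^{\vee})=B_{Q}(y,\alpha^{\vee})$, together with $s_{\alpha}\alpha^{\vee}=-\alpha^{\vee}$, $s_{\alpha}y=y-\langle\alpha,y\rangle\alpha^{\vee}$ and $B_{Q}(\alpha^{\vee},\alpha^{\vee})=2Q(\alpha^{\vee})$, one gets $2B_{Q}(y,\alpha^{\vee})=2\langle\alpha,y\rangle Q(\alpha^{\vee})$ with no division and hence no case distinction. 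Your route through $Q(s_{\alpha}y)=Q(y)$ forces the hypothesis $\langle\alpha,y\rangle\neq 0$, which you then repair with the linearity/translation reduction; that reduction is sound (one of $N=1,2$ always works), but it is exactly the extra step the paper avoids, and the alternative you sketch in your closing remark for the degenerate case is in effect the paper's argument. Either write-up is fine; the $B_{Q}$-invariance version is simply the cleaner one.
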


\begin{proof}
    This follows from the $W$-invariance $B_{Q}(w_{\alpha}y,w_{\alpha}\alpha^{\vee})=B_{Q}(y,\alpha^{\vee})$.
\end{proof}

In particular, Lemmas \ref{SimpTorChar} and \ref{BQtoCP} imply the following corollary.

\begin{corollary}\label{TrivOnModCoRoots}
    Let $\chi\in \mathscr{Z}$ and $\alpha\in \Phi$. Then for any $u\in \mathfrak{o}^{\times}$
    \begin{equation*}
        \chi(\overline{h}_{\alpha}(u)^{n_{\alpha}})=1.
    \end{equation*}
\end{corollary}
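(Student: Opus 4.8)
The plan is to combine Lemma~\ref{SimpTorChar} with Lemma~\ref{BQtoCP} and then unwind the definition of $n_\alpha$. First I would apply Lemma~\ref{SimpTorChar} to the given $\chi \in \mathscr{Z}$ to obtain a coweight $y_\chi \in Y^{sc}\otimes\mathbb{Q}$ such that $\chi(\overline{h}_\alpha(u)) = (\varpi,u)_n^{B_Q(y_\chi,\alpha^\vee)}$ for all $\alpha \in \Phi$; since $u \in \mathfrak{o}^\times$ the Hilbert symbol $(\varpi,u)_n$ depends only on the image of $u$ in the residue field and is a primitive root of unity of some order dividing $n$, but the key point is simply that its $n$-th power is trivial.

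Next I would compute $\chi(\overline{h}_\alpha(u)^{n_\alpha})$. The natural move is to use that $u \mapsto \overline{h}_\alpha(u)$ is close to multiplicative: more precisely, relation \eqref{Relg} (or the corresponding formula in \cite[Section 2.3]{GGK}) gives $\overline{h}_\alpha(u)^{n_\alpha} = \overline{h}_\alpha(u^{n_\alpha}) \cdot z$ where $z$ is a product of Hilbert symbols of the form $(\pm u, \pm u^{-1})_n^{Q(\alpha^\vee)}$, hence a root of unity whose order divides $n$. Alternatively, and more cleanly, one can avoid the commutator corrections entirely by writing $\chi(\overline{h}_\alpha(u)^{n_\alpha}) = \big((\varpi,u)_n^{B_Q(y_\chi,\alpha^\vee)}\big)^{n_\alpha}$ directly from the displayed formula, treating $\overline{h}_\alpha(u)^{n_\alpha}$ as the $n_\alpha$-fold product of the group element $\overline{h}_\alpha(u)$ and applying that $\chi$ is a character. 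Then by Lemma~\ref{BQtoCP}, $B_Q(y_\chi,\alpha^\vee) = Q(\alpha^\vee)\langle\alpha,y_\chi\rangle$, so $n_\alpha B_Q(y_\chi,\alpha^\vee) = n_\alpha Q(\alpha^\vee)\langle\alpha,y_\chi\rangle$. Since $n_\alpha = n/\mathrm{GCD}(n,Q(\alpha^\vee))$, we have $n_\alpha Q(\alpha^\vee) = n \cdot \frac{Q(\alpha^\vee)}{\mathrm{GCD}(n,Q(\alpha^\vee))} \in n\mathbb{Z}$, because $\frac{Q(\alpha^\vee)}{\mathrm{GCD}(n,Q(\alpha^\vee))}$ is an integer. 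Hence $n_\alpha B_Q(y_\chi,\alpha^\vee) \in n\mathbb{Z}$, and since the Hilbert symbol $(\varpi,u)_n$ has order dividing $n$, we get $(\varpi,u)_n^{n_\alpha B_Q(y_\chi,\alpha^\vee)} = 1$, which is the desired equality.

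I do not anticipate a serious obstacle here: the statement is essentially an arithmetic bookkeeping consequence of the two preceding lemmas together with the divisibility built into the definition of $n_\alpha$. The one point requiring a little care is the passage from $\chi(\overline{h}_\alpha(u)^{n_\alpha})$ to $\chi(\overline{h}_\alpha(u))^{n_\alpha}$ — this is immediate since $\chi$ is a group homomorphism, so $\chi$ of the $n_\alpha$-th power is the $n_\alpha$-th power of $\chi$, and no commutator relations among the $\overline{h}_\alpha(u)$ for varying $u$ are needed. If one instead wanted to expand $\overline{h}_\alpha(u)^{n_\alpha}$ as $\overline{h}_\alpha(u^{n_\alpha})$ times a cocycle, the cocycle contributions would be powers of $n$-th Hilbert symbols and hence killed by $\chi$ composed with nothing — but this route is unnecessary. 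So the proof is short: invoke Lemma~\ref{SimpTorChar}, raise to the $n_\alpha$-th power, rewrite the exponent via Lemma~\ref{BQtoCP}, and observe $n \mid n_\alpha Q(\alpha^\vee)$.
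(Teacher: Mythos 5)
Your proposal is correct and is exactly the argument the paper intends: the corollary is stated as an immediate consequence of Lemmas \ref{SimpTorChar} and \ref{BQtoCP}, namely $\chi(\overline{h}_{\alpha}(u)^{n_{\alpha}})=(\varpi,u)_{n}^{n_{\alpha}Q(\alpha^{\vee})\langle\alpha,y_{\chi}\rangle}$ with $n\mid n_{\alpha}Q(\alpha^{\vee})$. The only point left implicit in your write-up is that $\langle\alpha,y_{\chi}\rangle\in\mathbb{Z}$, which is guaranteed because Lemma \ref{SimpTorChar} asserts $y_{\chi}$ is a coweight; with that noted, the proof is complete and no cocycle corrections are needed, as you say.
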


\subsection{Parahoric subgroups}\label{ParaSubs}

Let $\mathcal{B}(G)=\mathcal{B}(\bf{G},F)$ be the enlarged Bruhat-Tits building of $G$. For any $x\in\mathcal{B}(G)$ there is a parahoric subgroup $P_{x}$ \cite{MP94} with maximal pro-$p$ subgroup $P_{x}^{+}$. The quotient $L=L_{x}:=P_{x}/P_{x}^{+}$ is isomorphic to the $\mathfrak{f}$-points of a connected reductive group $\mathbf{L}=\mathbf{L}_{x}$. Let $W_{x}$ be the Weyl group of $\mathbf{L}_{x}$. If we write $P=P_{x}$, then we may write $W_{P}=W_{x}$. 

For the rest of this subsection we assume $x$ is an element of the apartment of $T$, which we identify with $Y\otimes \mathbb{R}$ via the pinning. With this identification let $\mathscr{A}_{0}$ be the unique alcove contained in the positive Weyl chamber with respect to $\Phi^{+}$ and containing $0$ in its closure. 

The Weyl group $W_{P}=W_{x}$ can be naturally identified with the stabilizer of $x$ in $W_{\mathrm{af}}$. The root system of $W_{x}$ can be realized as
\begin{equation*}
    \Phi_{x}:=\{\alpha\in \Phi|\alpha(x)\in \mathbb{Z}\}.
\end{equation*}

If $x\in Y\otimes \mathbb{R}$ is an element in an alcove, then $P_{x}$ is an Iwahori subgroup. If $x\in \mathscr{A}_{0}$ we write $I:=P_{x}$.

Next we recall the theory of parahoric subgroups of the covering group $\overline{G}$ \cite{HW09,W11}. In particular, we want to understand the structure of $\overline{L}:=\overline{P}/P^{+}$. Note that $\overline{L}$ is well defined since $\mathrm{GCD}(n,p)=1$ implies $P^{+}$ has a unique splitting \cite[Proposition 2.3]{HW09}.

While $L$ is the points of an algebraic group, this may not be true for $\overline{L}$; though $\overline{L}$ is the central quotient of a reductive group of points. This is where the group $\overline{G}^{u}$ comes in. Now we make this precise.

We use the Hilbert symbol to identify $\mathfrak{f}^{\times}$ and $\mu_{q-1}\subset F^{\times}$. Specifically we have the isomorphism $\mathfrak{f}^{\times}\rightarrow \mu_{q-1}$ defined by $u\mapsto (\varpi,u^{\prime})_{q-1}$, where $u^{\prime}\in \mathfrak{o}^{\times}$ is any lift of $u$.

Brylinski-Deligne \cite[Construction 12.11]{BD01} show that the central extension
\begin{equation}\label{ParaCExt}
\mathfrak{f}^{\times}\hookrightarrow \overline{P}^{u}/P^{+}\twoheadrightarrow P/P^{+}
\end{equation}
arises as the $\mathfrak{f}$-points of a central extension of connected reductive groups
\begin{equation*}
\mathbf{G}_m\hookrightarrow \overline{\mathbf{L}}^{u}\twoheadrightarrow \mathbf{L}.
\end{equation*}
The root datum of the group $\overline{\mathbf{L}}^{u}$ is determined in \cite{W11} and it plays an important role in the definition of $\Phi_{\chi,\mathrm{af}}$ on line \eqref{DefPhiChiAf}.

The group $\overline{L}$ arises as the push out of the central extension \eqref{ParaCExt} via the $\frac{q-1}{n}$-th power map $\mu_{q-1}\rightarrow \mu_{n}$. In particular, $\overline{L}$ is a quotient of $\overline{L}^{u}:=\overline{\mathbf{L}}^{u}(\mathfrak{f})$ by the central subgroup $\mu_{\frac{q-1}{n}}$. We abuse notation and write this quotient map as $\overline{\wp}:\overline{L}^{u}\twoheadrightarrow \overline{L}$.

%%%%%%%%%%%%%%%%%%%%%%%%%%%%%%%%%%%%%%%%%%%%%%%
%%%%%%%%%%%%%%%%%%%%%%%%%%%%%%%%%%%%%%%%%%%%%%%
%Hecke Algebras
%%%%%%%%%%%%%%%%%%%%%%%%%%%%%%%%%%%%%%%%%%%%%%%
%%%%%%%%%%%%%%%%%%%%%%%%%%%%%%%%%%%%%%%%%%%%%%%

\subsection{Hecke Algebras}

In this subsection we intoduce several Hecke algebras. 

We begin with abstract Hecke algebras. See \cite[Section 7.1]{H90} for more details.
\begin{definition}
    Let $(W^{\prime},S^{\prime})$ be a Coxeter group and let $\ell^{\prime}$ be its length function. Let $\mathscr{H}(W^{\prime},S^{\prime})$ be a $\mathbb{C}$-vector space with standard basis elements $t_{w}$ indexed by $W^{\prime}$. The algebra structure is determined by the following relations.
    
    \begin{enumerate}
    \item $t_{1}$ is the unit element.
    \item $t_{s}t_{w}=t_{sw}$, if $s\in S^{\prime}$, $w\in W^{\prime}$ and $\ell^{\prime}(sw)=\ell^{\prime}(w)+1$.
    \item $t_{s}^{2}=(q-1)t_{s}+qt_{1}$, where $s\in S^{\prime}$.
    \end{enumerate}
\end{definition}

Recall that $\mathscr{H}(W^{\prime},S^{\prime})$ always has the following two characters. The index character is the unique $\mathbb{C}$-algebra homomorphism $\mathrm{ind}:\mathscr{H}(W^{\prime},S^{\prime})\rightarrow \mathbb{C}$ such that for $s\in S$, $\mathrm{ind}(t_{s})=q$. The sign character is the unique $\mathbb{C}$-algebra homomorphism $\mathrm{sgn}:\mathscr{H}(W^{\prime},S^{\prime})\rightarrow \mathbb{C}$ such that for $s\in S$, $\mathrm{sgn}(t_{s})=-1$.

Next we introduce the Hecke algebras coming from $\overline{G}$. 
\begin{definition}
    Let
    \begin{equation*}
    \mathcal{H}:=C^{\infty}_{c,\varepsilon}(I^{+}\backslash \overline{G}/I^{+})
    \end{equation*}
    be the $\mathbb{C}$-vector space of $\varepsilon^{-1}$-genuine locally constant compactly supported functions $f:\overline{G}\rightarrow \mathbb{C}$ that are $I^{+}$-biinvariant. For $g\in \overline{G}$ define $T_{g}$ to be the unique element in $\mathcal{H}$ such that $\mathrm{Supp}(T_{g})=\mu_{n}I^{+}gI^{+}$ and $T_{g}(g)=1$. There is a convolution product $*:\mathcal{H}\otimes \mathcal{H}\rightarrow \mathcal{H}$ defined by 
    \begin{equation*}
    f_{1}*f_{2}(g):=\int_{\overline{G}}f_{1}(h)f_{2}(h^{-1}g)dh,
    \end{equation*}
    where $dh$ is a Haar measure on $\overline{G}$ normalized so that the measure of $I^{+}$ is equal to $1$. With this product $\mathcal{H}$ is an associative $\mathbb{C}$-algebra with identity element $T_{1}$. The structure of $\mathcal{H}$ is studied in \cite{GGK}.
    
    Let $P\subset G$ be a parahoric subgroup containing $I$ and let $P^{+}$ denote its pro-unipotent radical. Define 
    $$\mathcal{H}_{P}:=C^{\infty}_{c,\varepsilon}(I^{+}\backslash \overline{P}/I^{+}).$$
\end{definition}

The inclusion $\overline{T}_{\mathfrak{o}}\subset \overline{I}$ induces an isomorphism $\overline{T}_{\mathfrak{o}}/T_{\mathfrak{o}}^{+}\simeq\overline{I}/I^{+}$. This induces an isomorphsim of $\mathbb{C}$-algebras $C^{\infty}_{\varepsilon}(I^{+}\backslash \overline{I}/I^{+})=C^{\infty}_{\varepsilon}(\overline{I}/I^{+})\simeq C^{\infty}_{\varepsilon}(\overline{T}_{\mathfrak{o}}/T_{\mathfrak{o}}^{+})$.

\begin{definition}
    Let $\chi\in \mathrm{Hom}(\overline{T}_{\mathfrak{o}}/T_{\mathfrak{o}}^{+},\mathbb{C}^{\times})$ be an $\varepsilon$-genuine character. Then we write $e_{\chi}\in C^{\infty}_{\varepsilon}(I^{+}\backslash \overline{I}/I^{+})$ for the element corresponding to $\chi^{-1}\in C^{\infty}_{\varepsilon}(\overline{T}_{\mathfrak{o}}/T_{\mathfrak{o}}^{+})$ normalized so that $e_{\chi}$ is an idempotent. 
    Define
    $$\mathcal{H}_{\chi}:=e_{\chi}\mathcal{H}e_{\chi}.$$
    Let  $W_{\chi}:=\mathrm{Stab}_{W_{\mathrm{ea}}}(\chi)$,
    \begin{equation}\label{DefPhiChiAf}
        \Phi_{\chi,\mathrm{af}}:=\{\alpha+k\in \Phi_{\mathrm{af}}|\chi((\varpi,u)^{kQ(\alpha^{\vee})}\overline{h}_{\alpha}(u))=1\text{ for all }u\in \mathfrak{o}^{\times}\},
    \end{equation}
    and let $W_{\chi}^{0}$ be the subgroup of $W_{\mathrm{af}}$ generated by the reflections $s_{a}$, where $a\in \Phi_{\chi,\mathrm{af}}$. 
    
    Let $\mathscr{T}:N(\overline{T})\twoheadrightarrow W_{\mathrm{ea}}$ to be the composition of the group homomorphisms $\wp:N(\overline{T})\rightarrow N(T)$ and the Tits homomorphism $N(T)\rightarrow W_{\mathrm{ea}}$ \cite[\S1.2]{T79}. Let $N^{0}_{\chi}:=\mathscr{T}^{-1}(W_{\chi}^{0})$.
    Define 
    $$\mathscr{H}_{\chi}\subset \mathcal{H}_{\chi}$$
    to be the subspace spanned by elements of the form $T_{n}$, where $n\in N_{\chi}^{0}$. 
\end{definition}

Most of the results we need about $\mathcal{H}$ and $\mathcal{H}_{\chi}$ are in \cite{GGK,Wang24}. We note a notational difference in these two references. Specifically, \cite{GGK} uses $\varepsilon$-genuine Hecke algebras, while \cite{Wang24} uses $\varepsilon^{-1}$-genuine Hecke algebras. In this paper we use the later notation.

The next theorem is an equivalent restatement of \cite[Theorem 4.6]{Wang24}, altered to be more suitable for our computations.

Define $\mathscr{A}_{\chi,0}$ to be the unique connected component of $Y\otimes \mathbb{R}-\cup_{a\in \Phi_{\chi,af}}H_{a}$ containing $\mathscr{A}_{0}$. 
Let $\Delta_{\chi}$ be the set of simple affine roots with respect to $\mathscr{A}_{\chi,0}$. 
Let $S_{\chi}:=\{s_{a}|a\in \Delta_{\chi}\}$. % and let $\Omega_{\chi}=\{w\in W_{\chi}|w \mathscr{A}_{\chi,0}=\mathscr{A}_{\chi,0}\}$. 
%Then $(W_{\chi}^{0},S_{\chi})$ is an affine Weyl group. Define $\mathscr{H}(W_{\chi}^{0},S_{\chi})$.
Let $\ell_{\chi}$ be the length function of the Coxeter group $(W_{\chi}^{0},S_{\chi})$. For $\chi\in\mathscr{Z}$, let $\widehat{\chi}:N_{\chi}^{0}\rightarrow \mathbb{C}^{\times}$ be the character from \cite[Lemma 5.2]{Wang24} restricted to $N_{\chi}^{0}$.

\begin{theorem}{\cite[Theorem 4.6]{Wang24}}\label{IMPres}
    For every $w\in W_{\chi}^{0}$ fix a representative $\dot{w}\in N_{\chi}^{0}$ such that $\mathscr{T}(\dot{w})=w$.
    
    The linear map 
    $$\mathscr{H}(W_\chi^0,S_\chi)\rightarrow \mathscr{H}_{\chi}, \quad t_{w}\mapsto q^{\frac{\ell_{\chi}(w)-\ell(w)}{2}}\widehat{\chi}(\dot{w})e_{\chi}T_{\dot{w}}e_{\chi}$$
    is well defined and defines a $\mathbb{C}$-algebra isomorphism. 
\end{theorem}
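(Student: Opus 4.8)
The statement to prove is Theorem~\ref{IMPres}, which is explicitly attributed to \cite[Theorem 4.6]{Wang24} and described as ``an equivalent restatement'' of that result, ``altered to be more suitable for our computations.'' So my plan is not to reprove the Iwahori--Matsumoto-type presentation from scratch, but to deduce the present formulation from the cited theorem by tracking the normalizations.

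The plan is to proceed as follows. First I would recall the precise statement of \cite[Theorem 4.6]{Wang24}: it gives an algebra isomorphism between an abstract Hecke algebra $\mathscr{H}(W_\chi^0, S_\chi)$ attached to the Coxeter system $(W_\chi^0, S_\chi)$ and the subalgebra $\mathscr{H}_\chi \subset \mathcal{H}_\chi$, but (presumably) with a different choice of generators on the image side -- most likely sending $t_s$ to a normalized double-coset element $e_\chi T_{\dot s} e_\chi$ up to some power of $q$ and some root of unity, and without the twist by $\widehat\chi$, or with a differently-phrased twist. The content of the restatement is then: (i) the exponent $\tfrac{\ell_\chi(w)-\ell(w)}{2}$ of $q$ is the correct normalization so that the quadratic relation $t_s^2 = (q-1)t_s + q t_1$ holds with the residue cardinality $q$ of $F$ (as opposed to a possibly different parameter $q_s$ attached to each reflection $s$ in the cover), and (ii) the scalar $\widehat\chi(\dot w)$ makes the assignment $w \mapsto q^{(\ell_\chi(w)-\ell(w))/2}\widehat\chi(\dot w) e_\chi T_{\dot w} e_\chi$ multiplicative and independent of the choice of representative $\dot w$. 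So the steps are: check well-definedness (independence of $\dot w$); check the braid relations $t_s t_w = t_{sw}$ when $\ell_\chi(sw) = \ell_\chi(w)+1$; and check the quadratic relation.

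In more detail: for well-definedness, two representatives $\dot w, \dot w'$ of the same $w \in W_\chi^0$ differ by an element of $\overline T_\mathfrak{o} \cap N_\chi^0$; since $e_\chi$ is the idempotent cutting out the $\chi^{-1}$-isotypic part of $C^\infty_\varepsilon(\overline T_\mathfrak{o}/T_\mathfrak{o}^+)$, the elements $e_\chi T_{\dot w} e_\chi$ and $e_\chi T_{\dot w'} e_\chi$ differ exactly by $\widehat\chi(\dot w')\widehat\chi(\dot w)^{-1}$, using that $\widehat\chi$ restricted to $\overline T_\mathfrak{o}\cap N_\chi^0$ agrees with the character $\chi$ defining $e_\chi$; this is where \cite[Lemma 5.2, Lemma 5.3]{Wang24} (or the analogue) gets invoked. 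For the braid relations, one uses the standard fact that $I^+ \dot w I^+ \cdot I^+ \dot s I^+ = I^+ \dot w\dot s I^+$ when lengths add -- here length with respect to the refined root system $\Phi_{\chi,\mathrm{af}}$ -- together with the multiplicativity $\widehat\chi(\dot w \dot s) = \widehat\chi(\dot w)\widehat\chi(\dot s)$ and additivity of $\ell_\chi$ and of $\ell$ along reduced words (note $\ell$ here is the ambient length in $W_\mathrm{af}$, which is also additive along $\Phi_{\chi,\mathrm{af}}$-reduced words by a sub-root-system argument), so the $q$-powers multiply correctly. For the quadratic relation, one computes $e_\chi T_{\dot s} e_\chi * e_\chi T_{\dot s} e_\chi$ directly from the structure of the rank-one Hecke algebra $\mathcal{H}_{P_s}$ for the parahoric $P_s$ attached to $s$; the point is that when $\alpha$ is the affine root with $s = s_\alpha$ and $\alpha \in \Phi_{\chi,\mathrm{af}}$, the relevant index $[\,]$ equals $q^{\ell_\chi(s)}$, and after the normalization by $q^{(\ell_\chi(s)-\ell(s))/2}$ and the factor $\widehat\chi(\dot s)$ the relation collapses to $t_s^2 = (q-1)t_s + q$, matching item (3) of the abstract Hecke algebra definition. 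Here one needs $\ell(s) = 1$ for $s \in S_\chi$? Actually $s \in S_\chi$ need not have ambient length $1$, so the factor $q^{(\ell_\chi(s)-\ell(s))/2}$ genuinely matters; I would be careful to extract from \cite{Wang24} the statement that the self-convolution $e_\chi T_{\dot s} e_\chi * e_\chi T_{\dot s} e_\chi$ produces the index $q^{\ell(s)}$ on the identity term, which is precisely what rescales to $q$.

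The main obstacle I anticipate is the bookkeeping around the character $\widehat\chi$ and the two competing length functions $\ell_\chi$ (from the Coxeter structure on $W_\chi^0$ with generators $S_\chi$) and $\ell$ (the ambient affine length), and making sure the restatement is literally equivalent to \cite[Theorem 4.6]{Wang24} rather than subtly stronger. Concretely, the cited theorem is presumably stated with generators normalized to have the ``standard'' quadratic relation but with possibly non-reduced representatives or a different cocycle, and the work is to show that the explicit map $t_w \mapsto q^{(\ell_\chi(w)-\ell(w))/2}\widehat\chi(\dot w) e_\chi T_{\dot w} e_\chi$ is the same isomorphism after unwinding \cite{Wang24}'s normalizations; this is essentially a change-of-variables verification, but it requires care because the power of $q$ and the root of unity both enter and must be matched term by term. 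Once that dictionary is set up, the proof is a short computation, so the real content is locating and citing the right lemmas from \cite{Wang24} (the structure of $\mathscr{H}_\chi$, the genuine character $\widehat\chi$, and the length/index compatibility) and assembling them.
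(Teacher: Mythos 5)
Your proposal is essentially the paper's approach: the paper offers no independent argument for Theorem \ref{IMPres}, presenting it purely as an equivalent restatement of \cite[Theorem 4.6]{Wang24} (with $\widehat{\chi}$ taken from \cite[Lemma 5.2]{Wang24}), which is exactly the reduction you propose. Your sketch of the normalization bookkeeping --- independence of the representative $\dot{w}$ via $\widehat{\chi}$, multiplicativity via the compatibility of $\ell$ and $\ell_{\chi}$ for the generators $S_{\chi}$ adapted to $\mathscr{A}_{\chi,0}$, and the rescaling of the $q^{\ell(s)}$ index term in the quadratic relation --- correctly identifies the ingredients already established in \cite{Wang24}, so nothing further is needed.
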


%We use the injection in the previous theorem to identify $\mathscr{H}_{\chi}$ with its image in $\mathcal{H}_{\chi}$.

We end this subsection with a lemma for later use. If $w\in W$ and $\chi$ is a character of $\overline{T}_{\mathfrak{o}}$ define $^{w}\chi(t):=\chi(\dot{w}^{-1}t\dot{w})$, where $\dot{w}\in N_{\overline{G}}(\overline{T})$ is any representative of $w$.% that will be used in the proof of Proposition \ref{ThetaPmod}.

\begin{lemma}\label{chiHPorbit}
    For each $w\in W_{P}$ we fix an element $\dot{w}\in \overline{P}$ representing $w$.
    
    Let $\chi\in \mathrm{Hom}(\overline{T}_{\mathfrak{o}}/T_{\mathfrak{o}}^{+},\mathbb{C}^{\times})$ be an $\varepsilon$-genuine character. Then 
    \begin{equation*}
        \{e_{^{w}\chi}*T_{\dot{w}}*e_{\chi}|w\in W_{P}\}
    \end{equation*}
    is a basis for $\mathcal{H}_{P}e_{\chi}$. Thus $\mathrm{dim}\,\mathcal{H}_{P}e_{\chi}=|W_{P}|$.
\end{lemma}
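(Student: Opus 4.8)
The plan is to reduce everything to the Bruhat decomposition of the finite covered reductive quotient $\overline L=\overline P/P^+$ together with the elementary fact that right convolution by $e_\chi$ is projection onto a $\chi$-isotypic subspace.

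\emph{Counting double cosets.} Since $P^+\subseteq I^+\subseteq\overline I\subseteq\overline P$ and $P^+$ is normal in $\overline P$, we may pass to $\overline L=\overline P/P^+$, a $\mu_n$-cover of the finite reductive group $L=P/P^+$, with Borel subgroup $\overline B=\overline I/P^+$, maximal torus $\overline B/\overline U\cong\overline T_{\mathfrak o}/T_{\mathfrak o}^+$, and unipotent radical $\overline U=I^+/P^+$. The Bruhat decomposition $\overline L=\bigsqcup_{w\in W_P}\overline B\dot w\overline B$ lifts to $\overline P=\bigsqcup_{w\in W_P}\overline I\dot w\overline I$. Each Schubert cell satisfies $\overline B\dot w\overline B/\overline B=\overline U\dot w\overline B/\overline B$ and so is a single $\overline U$-orbit under left translation; hence $I^+\backslash\overline P/\overline I=\overline U\backslash\overline L/\overline B=\bigsqcup_{w\in W_P}\overline U\backslash\overline B\dot w\overline B/\overline B$ has exactly $|W_P|$ elements, represented by the $\dot w$.

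\emph{Upper bound.} An element $f\in\mathcal H_P$ is $I^+$-biinvariant and $\varepsilon^{-1}$-genuine, and $f\in\mathcal H_P e_\chi$ iff $f*e_\chi=f$. By the defining property of the idempotent $e_\chi\in C^\infty_\varepsilon(\overline I/I^+)$ this is equivalent to $f$ being $\chi$-equivariant for right translation by $\overline T_{\mathfrak o}$; combined with left $I^+$-invariance, such an $f$ is determined by its values on a set of representatives of $I^+\backslash\overline P/\overline I$. Therefore $\dim\mathcal H_P e_\chi\le|W_P|$.

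\emph{Exhibiting the basis.} Each element $e_{{}^w\chi}*T_{\dot w}*e_\chi$ lies in $\mathcal H_P e_\chi$ (because $e_\chi*e_\chi=e_\chi$), and since $e_{{}^w\chi}$ and $e_\chi$ are supported on $\overline I$ it is supported inside the Bruhat cell $\mu_n\overline I\dot w\overline I$. As these cells are pairwise disjoint, it remains only to prove $e_{{}^w\chi}*T_{\dot w}*e_\chi\neq0$, which I would do by evaluating at $\dot w$. Writing the double convolution as a sum over $h_1,h_2\in\overline T_{\mathfrak o}/T_{\mathfrak o}^+$, the $(h_1,h_2)$-term involves $T_{\dot w}(s\dot w)$ for $s:=h_1^{-1}\cdot\dot w h_2^{-1}\dot w^{-1}\in\overline T_{\mathfrak o}$, so it contributes only when $s\dot w\in\mu_n I^+\dot w I^+$. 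The key point is the identity $\{s\in\overline T_{\mathfrak o}\mid s\dot w\in\mu_n I^+\dot w I^+\}=\mu_n T_{\mathfrak o}^+$, equivalently $\overline T_{\mathfrak o}\cap(I^+\cdot\dot w I^+\dot w^{-1})=T_{\mathfrak o}^+$; reducing modulo $P^+$ this follows from the Bruhat decomposition of $\overline L$, since $\overline U\cdot\dot w\overline U\dot w^{-1}\subseteq\overline U\,\overline U^-$ meets $\overline T$ trivially (in $\overline U\,\overline U^-$ an element of the torus forces both unipotent factors to be trivial). On the surviving terms the character ${}^w\chi$ coming from the left idempotent, the genuine character of $T_{\dot w}$, and the character $\chi$ coming from the right idempotent all cancel, so every contribution carries the same phase and the sum is a nonzero multiple of the normalization constant of $e_\chi$. (This matching is precisely why the left idempotent must be $e_{{}^w\chi}$.) Hence the $|W_P|$ elements $e_{{}^w\chi}*T_{\dot w}*e_\chi$ are linearly independent, and with the upper bound they form a basis of $\mathcal H_P e_\chi$, so $\dim\mathcal H_P e_\chi=|W_P|$.

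The main obstacle is the identity $\overline T_{\mathfrak o}\cap(I^+\cdot\dot w I^+\dot w^{-1})=T_{\mathfrak o}^+$ together with the careful bookkeeping of the $\varepsilon$- and $\chi$-normalizations needed to see that the surviving terms do not cancel; everything else is routine manipulation with the Bruhat and Iwahori factorizations.
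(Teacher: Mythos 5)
Your proof is correct and takes essentially the same route as the paper: the paper's own proof consists of the Bruhat decomposition of $\overline{P}$ into $\overline{I}$-double cosets indexed by $W_{P}$ together with the identity $T_{\dot{w}}*e_{\chi}=e_{{}^{w}\chi}*T_{\dot{w}}*e_{\chi}$ cited from Gao--Gurevich--Karasiewicz, and your argument uses the same decomposition while reproving the content of that citation by hand (the support statement, the nonvanishing at $\dot{w}$ via $\overline{T}_{\mathfrak{o}}\cap I^{+}\dot{w}I^{+}\dot{w}^{-1}=T_{\mathfrak{o}}^{+}$, and the phase matching that forces the left idempotent to be $e_{{}^{w}\chi}$). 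The only difference is packaging: you get the dimension from a count of $(I^{+},\overline{I})$-double cosets plus disjoint supports, where the paper implicitly argues that every $T_{g}*e_{\chi}$ collapses onto one of these elements; both rest on the same two ingredients.
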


\begin{proof}
    This follows from the Bruhat decomposition and the identity $T_{\dot{w}}*e_{\chi}=e_{^{w}\chi}*T_{\dot{w}}*e_{\chi}$ \cite[Lemma 4.2, (ii)]{GGK}. (In the notation of \cite{GGK}, $e_{\chi}$ is $c(\chi)$.)
\end{proof}

\subsection{Moy-Prasad filtration and the exponential map}\label{sec:moyprasad}
%Let $\mathcal B(G)$ denote the building for $G$.

In this section we define an exponential map to the covering group.

For $x\in \mathcal B(G)$ and $r\ge 0$, let $G_{x,r}$ and $\mathfrak g_{x,r}$ be the Moy-Prasad filtration subgroups as defined in \cite{moyprasad}.
Let 
$$G_{x,r^+} = \bigcup_{s>r}G_{x,s}, \quad \mathfrak g_{x,r^+} = \bigcup_{s>r}\mathfrak g_{x,s}$$
and
$$G_{r^+} = \bigcup_{x\in \mathcal B(G)}G_{x,r^+}, \quad \mathfrak g_{r^+} = \bigcup_{x\in\mathcal B(G)}\mathfrak g_{x,r^+}.$$

We require the following hypothesis on the residue characteristic of $F$ to define our exponential map.

\begin{hypothesis}\label{hyp:exp}
    Either
    \begin{enumerate}
        \item the exponential function $\exp$ converges on $\mathfrak g_{0^+}$, or 
        \item there is a suitable mock-exponential function defined on $\mathfrak g_{0^+}\to G_{0^+}$.
    \end{enumerate}
    We refer to \cite[Lemma 3.2]{barmoy} for a sufficient condition to guarantee the convergence of the exponential map. 
    We refer to \cite[Remark 3.2.2]{debacker} for a discussion on mock exponential maps.
    In either case we write
    $$\exp:\mathfrak g_{0^+}\to G_{0^+}$$
    for the chosen function.
    %The exponential function $\exp$ converges on $\mathfrak g_{0^+}$.
    %We refer to \cite[Lemma 3.2]{barmoy} for sufficient conditions for this to hold.
\end{hypothesis}

\begin{definition}
    Fix $r\ge 0$.
    Since $G_{x,r^+}$ is a pro-p group and $\mathrm{GCD}(n,p)=1$ it admits a unique splitting
    $$s_{x,r}:G_{x,r^+}\to \overline G.$$
    Define $\tilde G_{x,r^+}$ to be the image of $s_{x,r}$.
    Define 
    $$\tilde G_{r^+} = \cup_x \tilde G_{x,r^+}.$$
    By uniqueness of the splittings, the $s_{x,r}$ agree on overlaps, and so we can define an exponential map
    $$\exp:\mathfrak g_{r^+}\to \tilde G_{r^+}$$
    by patching the maps
    $$s_{x,r}\circ\exp:\mathfrak g_{x,r^+}\to \tilde G_{x,r^+}.$$

\end{definition}

\begin{definition}
    Let $(\pi,V)$ be an irreducible smooth representation of $\overline G$. 
    We say $\pi$ has depth $r$, if $r$ is minimal such that there exists an $x\in \mathcal B(G)$ such that $\pi^{\tilde G_{x,r^+}}\ne 0$.
\end{definition}

\subsection{DeBacker's lifting map and stable orbits}
\begin{definition}
    Let $\mathcal N$ denote the set of nilpotent elements of $\mathfrak g$.
    The group $\overline G$ acts by the adjoint action on $\mathcal N$ and we write $\mathcal N_o$ for the set of orbits under this action.
    \begin{enumerate}
        \item The action of $\overline G$ on $\mathfrak g$ factors surjectively through $G$ and so the set of nilpotent orbits of $\overline G$ and $G$ coincide.
        \item There are finitely many $G$-orbits of nilpotent elements.
        \item The set of nilpotent orbits have a partial order called the closure order defined by $\mathcal O_1\le \mathcal O_2$ if $\mathcal O_1\subseteq \overline{\mathcal O_2}$ where the closure is taken with respect to the Hausdorff topology.
    \end{enumerate}
    We say two nilpotent elements are stably conjugate if they are conjugate by an element in $\overline{\mathbf G}(\overline F)$.
    We write $\mathcal N_o^s$ for the set of stable equivalence classes of nilpotent orbits.
\end{definition}

We will require the following hypothesis on the residue characteristic of $F$ to hold.
\begin{hypothesis}\label{hyp:lift}
    The hypotheses in \cite[\S4.2]{debacker-nilpotent} hold.
    In particular that the residuce characteristic is good for $G_{x,0}/G_{x,0^+}$ for all $x$.
\end{hypothesis}

The following theorem of DeBacker allows us to relate nilpotent orbits of the reductive quotient of parahoric subgroups to nilpotent orbits of $G$.
\begin{theorem}
    \cite{debacker-nilpotent}
    Suppose hypothesis \ref{hyp:lift} holds.
    
    Let $x\in \mathcal B(G)$ and let
    $$\pi_x:\mathfrak g_{x,0} \to \mathfrak g_{x,0}/\mathfrak g_{x,0^+} = \mathrm{Lie}(G_{x,0}/G_{x,0^+})$$
    denote the quotient map.
    
    For any nilpotent orbit $\underline{\mathcal O}\subset \mathfrak g_{x,0}/\mathfrak g_{x,0^+}$ there is a unique minimal nilpotent orbit $\mathscr L_x(\underline{\mathcal O})$ of $\mathfrak g$ intersecting $\pi_x^{-1}(\underline{\mathcal O})$.
    
    Moreover, every nilpotent orbit of $\mathfrak g$ can be realised in this fashion for some choice of vertex $x$ in a fixed apartment.
\end{theorem}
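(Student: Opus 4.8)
The plan is to reproduce DeBacker's argument from \cite{debacker-nilpotent}, which combines Moy--Prasad filtration theory with the good-characteristic structure theory of nilpotent orbits (Jacobson--Morozov triples and associated/optimal cocharacters). There are three assertions to establish: (a) $\pi_x^{-1}(\underline{\mathcal O})$ actually meets $\mathcal N$, so that the set of nilpotent $G$-orbits intersecting it is nonempty; (b) among those finitely many orbits there is a \emph{unique} minimal one in the closure order, which we call $\mathscr L_x(\underline{\mathcal O})$; and (c) every nilpotent orbit of $\mathfrak g$ occurs as $\mathscr L_x(\underline{\mathcal O})$ for a vertex $x$ in a fixed apartment.

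For (a) the key device is an \emph{adapted cocharacter}. Write $L = G_{x,0}/G_{x,0^+} = \mathscr G_x(\mathfrak f)$ with $\mathscr G_x$ the smooth parahoric $\mathfrak o$-group scheme; since hypothesis \ref{hyp:lift} guarantees good characteristic, a nilpotent representative $\bar X$ of $\underline{\mathcal O}$ sits in the degree-$2$ piece of a Jacobson--Morozov cocharacter $\bar\lambda\colon \mathbb G_m\to L$, so $\bar\lambda(t)$ contracts $\bar X$ as $t\to 0$. By smoothness of $\mathscr G_x$ and reductivity of its special fibre, $\bar\lambda$ lifts to a cocharacter $\lambda$ of $G$ normalising the parahoric filtration at $x$. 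Grading $\mathfrak g_{x,0}$ by $\lambda$-weights and running a successive-approximation argument along the Moy--Prasad filtration (at each stage moving an arbitrary lift of $\bar X$ into $\bigoplus_{i\ge 1}\mathfrak g(\lambda)_i$ by an element of $\mathfrak g_{x,0^+}$, using convergence of $\exp$ or a mock-exponential as in Hypothesis \ref{hyp:exp}; alternatively, one can try to extract the nilpotent part of the Jordan decomposition of a lift), one produces $X'\in\pi_x^{-1}(\underline{\mathcal O})$ with $X'\in\bigoplus_{i\ge1}\mathfrak g(\lambda)_i$, and contraction by $\lambda$ then forces $X'$ nilpotent.

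Part (b) is the technical core. One would characterise the minimal orbits meeting $\pi_x^{-1}(\underline{\mathcal O})$ by an optimality condition: $X'\in\pi_x^{-1}(\underline{\mathcal O})\cap\mathcal N$ generates a minimal such orbit precisely when it has depth zero relative to $x$ in DeBacker's sense, equivalently when, after conjugation by $G_{x,0^+}$, it lies in $\mathfrak g(\lambda)_2$ for an adapted $\lambda$ whose reduction $\bar\lambda$ is Jacobson--Morozov for $\bar X$. Given two minimal-type lifts $X_1',X_2'$, they have the same image in $\mathfrak g_{x,0}/\mathfrak g_{x,0^+}$, and one shows they are $G_{x,0^+}$-conjugate, hence $G$-conjugate; this is an infinitesimal, Bruhat--Tits-theoretic incarnation of the fact that in good characteristic two associated cocharacters for a fixed nilpotent are conjugate under its centraliser (the Kempf--Rousseau optimal-parabolic picture). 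Uniqueness of the minimal orbit follows. I expect this conjugacy statement, together with the bookkeeping of depths across the Moy--Prasad filtration needed to prove it, to be the main obstacle, and it is exactly where hypothesis \ref{hyp:lift} is used in an essential way.

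Finally, for (c), I would start from a nilpotent orbit $\mathcal O$, pick $X\in\mathcal O$ with an associated cocharacter, and conjugate so that this cocharacter lies in $Y = X_*(\mathbf T)$, i.e.\ in the fixed apartment $\mathscr A$ of $T$. Choosing $x\in\mathscr A$ appropriately --- a suitable point, then a vertex, on the ray defined by the cocharacter, selected so as not to cross a wall that would shrink $\mathfrak g_{x,0}\cap\bigoplus_{i\ge1}\mathfrak g(\lambda)_i$ --- one obtains $X\in\mathfrak g_{x,0}$ with nonzero nilpotent image $\bar X$, and by the characterisation from (b) one checks $\mathcal O=\mathscr L_x(\underline{\mathcal O})$ for $\underline{\mathcal O}$ the $L$-orbit of $\bar X$. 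That the vertex lies in the originally fixed apartment is then immediate from the construction and $G$-conjugacy.
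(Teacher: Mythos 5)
The paper does not prove this statement: it is imported verbatim, with a citation, from DeBacker \cite{debacker-nilpotent}, so the only possible comparison is with DeBacker's own argument. Your outline does track that argument in broad strokes: lift an associated (Jacobson--Morozov) cocharacter of the reductive quotient to a cocharacter of $G$ adapted to $x$, choose a lift of $\bar X$ inside the $\lambda$-weight-$2$ part of $\mathfrak g_{x,0}$ (which surjects onto the weight-$2$ part of the quotient), so that nilpotence is automatic; prove uniqueness via ``adapted'' representatives and their conjugacy under $G_{x,0^+}$; and obtain surjectivity by conjugating an associated cocharacter of a given orbit into $X_*(\mathbf T)$ and choosing a suitable point of the apartment (passing to a vertex in the closure of its facet is harmless because $\mathfrak g_{y,0^+}\subseteq \mathfrak g_{x,0^+}$ there, so the same orbit stays minimal for the coarser coset).

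As a standalone proof, however, the proposal has a genuine gap exactly at its core. In (b) you only argue that adapted lifts form a single $G_{x,0^+}$-orbit; that does not yet yield a \emph{unique minimal} orbit among all orbits meeting $\pi_x^{-1}(\underline{\mathcal O})$. What is needed in addition is the degeneration statement that every nilpotent element of the coset contains the adapted orbit in its closure (equivalently, has dimension at least that of the adapted element), and your asserted equivalence ``minimal $\iff$ depth zero/adapted'' presupposes precisely this; it is the content of DeBacker's main lemma and you explicitly defer it, so the uniqueness claim is not established. Two smaller points: the aside about extracting the nilpotent part of a Jordan decomposition fails --- in $\mathfrak{sl}_2$ the lift $\left(\begin{smallmatrix}0&1\\ \varpi&0\end{smallmatrix}\right)$ of a nonzero nilpotent $\bar X$ is semisimple, so its nilpotent part is $0$ and no longer maps to $\underline{\mathcal O}$; and Hypothesis \ref{hyp:exp} (the exponential or mock-exponential) is irrelevant to this theorem --- it enters only in the local character expansion --- while no successive approximation is needed for existence once the lift is taken inside the weight-$2$ space; the approximation arguments in \cite{debacker-nilpotent} are needed for the deferred minimality/uniqueness step, not for (a).
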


The next lemma explains how DeBacker's lifting map works on stable orbits of split groups.
\begin{lemma}\label{lem:lifting}
    Suppose hypothesis \ref{hyp:lift} holds.
    
    Let $\mathbf G$ be a Chevalley group defined over $\mathbb Z$ with maximal torus $\mathbf T$.
    Let $F$ be a non-archimedean local field with residue field $\mathfrak f$.
    Let $G = \mathbf G(F)$, $T = \mathbf T(F)$.
    \begin{enumerate}
        \item The stable conjugacy classes of nilpotent orbits of $G$ are in bijection (via comparing weighted Dynkin diagrams) with the nilpotent orbits of $\mathbf G(\mathbb C)$.
        We denote the bijection by 
        $$\mathcal O\mapsto \mathcal O(\mathbb C).$$
        \item The bijection preserves the closure ordering with respect to the Zariski topology on both sides.
        \item Let $x$ be a point in the apartment for $T$. 
        There is a point $s\in \mathbf T(\mathbb C)$ such that the groups $G_{x,0}/G_{x,0^+}$ and $Z_{\mathbf G}(s)$ have the same root datum with respect to $\mathbf T(\mathfrak f)$ and $\mathbf T(\mathbb C)$ respectively.
        We write 
        $$\mathbb G := \mathbf G(\mathbb C), \quad \mathbb G_x:=Z_{\mathbf G(\mathbb C)}(s).$$
        \begin{enumerate}
            \item There is a bijection between the stable conjugacy classes of nilpotent orbits of $G_{x,0}/G_{x,0^+}$ (recall that $G_{x,0}/G_{x,0^+}$ is the $\mathfrak f$-points of split connected reductive group over $\mathfrak f$, so stable conjugacy means conjugate by an element of the $\overline {\mathfrak f}$-points of this group) and the nilpotent orbits of $Z_{\mathbf G(\mathbb C)}(s)$ which we denote by
            $$\underline{\mathcal O} \mapsto \underline{\mathcal O}(\mathbb C).$$
            The bijection preserves the (Zariski) closure ordering on both sides.
            \item If $\underline{\mathcal O}$ is a nilpotent orbit of $G_{x,0}/G_{x,0^+}$ then
            $$\mathscr L_x(\underline{\mathcal O})(\mathbb C) = \mathbb G.\underline {\mathcal O}(\mathbb C).$$
            To emphasize the $x$-dependence we will write 
            $$\mathrm{Sat}_{\mathbb G_x}^{\mathbb G}(\underline {\mathcal O}(\mathbb C)):=\mathbb G. \underline{\mathcal O}(\mathbb C).$$
        \end{enumerate}
    \end{enumerate}
\end{lemma}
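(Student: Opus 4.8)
The plan is to reduce everything to the well-known dictionary between split reductive groups over different fields and their complex counterparts, via weighted Dynkin diagrams, and then verify that DeBacker's lifting map is compatible with this dictionary. For part (1), the key point is that the classification of nilpotent orbits in a Chevalley Lie algebra over a field of good (or zero) characteristic is uniform: by the Bala--Carter--Pommerening theory, nilpotent orbits over $\mathfrak f$ (up to $\mathbf G(\overline{\mathfrak f})$-conjugacy, i.e. stable conjugacy) are parametrized by $\mathbf G$-conjugacy classes of pairs (Levi subgroup, distinguished parabolic), exactly as over $\mathbb C$. Since $\mathbf G$ is defined over $\mathbb Z$, these combinatorial data match, and attaching to each orbit its weighted Dynkin diagram (which is a $\mathbb Z$-combinatorial invariant of the Chevalley datum, computable from an $\mathfrak{sl}_2$-triple, using that good characteristic allows Jacobson--Morozov) gives the asserted bijection $\mathcal O\mapsto \mathcal O(\mathbb C)$. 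First I would cite the relevant literature (e.g.\ Premet, or McNinch, on Bala--Carter over good characteristic, together with the stability statement) rather than reproving it.

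For part (2), the closure order on nilpotent orbits is again encoded combinatorially: over $\mathbb C$ it is the dominance order on weighted Dynkin diagrams / the standard partial order, and over $\mathfrak f$ in good characteristic the same description holds (e.g.\ by Spaltenstein, or by the fact that orbit closures are cut out by the same equations). So (2) follows from (1) by transport of structure. The only subtlety is that one wants the Zariski closure ordering on the $\overline{\mathfrak f}$-variety of nilpotent elements to agree with the combinatorial order, which is standard.

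For part (3), one first recalls the structure theory of Moy--Prasad: for $x$ in the apartment of $T$, the reductive quotient $G_{x,0}/G_{x,0^+}$ is the $\mathfrak f$-points of a split reductive group whose root datum is $\{\alpha\in\Phi\mid \alpha(x)\in\mathbb Z\}$ together with the full cocharacter lattice $Y$; on the complex side, choosing $s=\exp(2\pi i \bar x)$ for a suitable lift $\bar x$ of $x$ (i.e.\ $s\in\mathbf T(\mathbb C)$ with $\alpha(s)=1$ iff $\alpha(x)\in\mathbb Z$), the centralizer $Z_{\mathbf G(\mathbb C)}(s)$ has exactly this root datum. This gives the group $\mathbb G_x$ and, applying parts (1)--(2) to the Chevalley group $\mathbb G_x$ in place of $\mathbf G$ (after checking $\mathbb G_x$ is again defined over $\mathbb Z$ with a compatible Chevalley structure — this is where a little care is needed, since $\mathbb G_x$ is a pseudo-Levi, not a Levi), yields the bijection $\underline{\mathcal O}\mapsto\underline{\mathcal O}(\mathbb C)$ of (3a) respecting closure order. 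For (3b), the statement $\mathscr L_x(\underline{\mathcal O})(\mathbb C)=\mathbb G.\underline{\mathcal O}(\mathbb C)$ is the assertion that DeBacker's minimal lift, read through weighted Dynkin diagrams, is just the complex saturation $\mathrm{Sat}_{\mathbb G_x}^{\mathbb G}$. I would prove this by comparing weighted Dynkin diagrams on both sides: DeBacker's construction lifts a nilpotent $\underline X\in\mathrm{Lie}(G_{x,0}/G_{x,0^+})$ to a nilpotent $X\in\mathfrak g_{x,0}$, and the associated cocharacter (grading element) of the lift, reduced, recovers that of $\underline X$; on the complex side, saturating $\underline{\mathcal O}(\mathbb C)$ from $\mathbb G_x$ to $\mathbb G$ is computed by taking the same $\mathfrak{sl}_2$-triple and reading its weighted Dynkin diagram in $\mathbb G$. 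Since the combinatorics of $\mathfrak{sl}_2$-triples and gradings is characteristic-independent in good characteristic, the two weighted Dynkin diagrams agree, hence the orbits agree under the bijection of part (1).

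The main obstacle I anticipate is not any single deep theorem but the bookkeeping in part (3): making precise the claim that $\mathbb G_x=Z_{\mathbf G(\mathbb C)}(s)$ is the ``same'' group as $G_{x,0}/G_{x,0^+}$ in a way that is functorial enough to carry the Bala--Carter/weighted-Dynkin-diagram machinery across, and verifying that DeBacker's lift (which is characterized by a minimality/genericity property of the orbit meeting $\pi_x^{-1}(\underline{\mathcal O})$) translates into the clean operation of complex saturation. In particular one must check that the cocharacter attached to the lift can be chosen to lie in $Y$ and to match, after reduction mod $\mathfrak p$, the Dynkin cocharacter of $\underline{\mathcal O}$ inside $G_{x,0}/G_{x,0^+}$; this is essentially contained in DeBacker's paper \cite{debacker-nilpotent} but needs to be extracted and phrased in the present language. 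Once that compatibility is in hand, parts (1), (2), (3a) are formal consequences of Bala--Carter theory in good characteristic and (3b) is immediate.
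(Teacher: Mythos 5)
Your proposal is correct and follows essentially the same route as the paper: uniform (Dynkin/Bala--Carter--Premet) classification of nilpotent orbits via weighted Dynkin diagrams for (1), (2) and (3a), closure order by Spaltenstein-type comparison, the element $s$ as the standard torsion point of $\mathbf T(\mathbb C)$ with $\alpha(s)=1$ iff $\alpha(x)\in\mathbb Z$ (the paper cites \cite[Theorem 4.1]{reeder-yu} for this), and (3b) from the fact that DeBacker's lift comes from lifting $\mathfrak{sl}_2$-triples, so the middle elements have matching pairings and hence equal weighted Dynkin diagrams. One small bookkeeping point: part (1) concerns nilpotent orbits over the characteristic-zero field $F$, so only Dynkin's classification is needed there; the good-characteristic Bala--Carter/Premet input you invoke is exactly what the paper reserves for the residue-field statement (3a).
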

\begin{proof}
    Parts $(1)$ and $(2)$ hold since $\overline F$ has characteristic zero and Dynkin's classification of nilpotent orbits, and the computations of the closure order is uniform across algebraically closed fields of characteristic 0.

    For part $(3)$ note that there is always a rational point $x'\in \mathcal B(G)$ such that $G_{x,0}=G_{x',0}$ and $G_{x,0^+}=G_{x',0^+}$.
    Thus the existence of $s$ follows from \cite[Theorem 4.1]{reeder-yu}.

    For $(3),(a)$ note that hypothesis \ref{hyp:lift} implies that the residue characteristic of $F$ is good for $G_{x,0}/G_{x,0^+}$ and so the existence of the bijection comes from \cite[Theorem 2.7]{premet}.
    The fact that the closure orderings agree follows from the tables in \cite{spaltenstein}.

    Part $(b)$ follows from the fact that the lifting map comes from a lifting map of $\mathfrak {sl}_2$-triples.
    In particular the middle elements of the triples have the same pairings with the character lattice of the split torus in which it is chosen to lie in and so they must have equal weighted Dynkin diagrams.
\end{proof}

\subsection{Character distributions and the local character expansion}
In this section we show that DeBacker's result about the existence and domain of validity of the local character expansion holds for the tame extensions under consideration in this paper.

\begin{hypothesis}\label{hyp:bilin}
    The Lie algebra $\mathfrak g$ admits a non-degenerate $G$-invariant symmetric bilinear form $B$.
    We refer to \cite[Proposition 4.1]{adler-roche} for the precise conditions on the residue characteristic for this to hold.
\end{hypothesis}

\begin{definition}
    For every nilpotent orbit $\mathcal O\in \mathcal N_o$ let $I_{\mathcal O}:C_c^\infty(\mathfrak g)\to \mathbb C$ denote the associated orbital integral. 
    It converges by \cite{rao} since $F$ has characteristic $0$.

    Suppose hypothesis \ref{hyp:bilin} holds.

    Let $\psi:F\to \mathbb C^\times$ be an additive character of $F$ trivial on $\mathfrak p$ and non-trivial on $\mathfrak o$.
    For a function $f\in C_c^\infty(\mathfrak g)$ we define the Fourier transform of $f$ to be
    $$\hat f(x):=\int_{\mathfrak g}\psi(B(x,y))f(y)dY.$$

    We write $\hat I_{\mathcal O}:C_c^\infty(\mathfrak g)\to \mathbb C$ for the Fourier transform of $I_{\mathcal O}$.
\end{definition}

\begin{theorem}
    \label{thm:local-char-expansion}
    Suppose hypotheses \ref{hyp:exp}, \ref{hyp:lift} and \ref{hyp:bilin} hold.
    
    Let $(\pi,V)$ be a smooth irreducible representation of $\overline G$ of depth-$r$.
    Then for all $f\in C_c^\infty(\tilde G_{r^+})$ we have
    $$\Theta_\pi(f) = \sum_{\mathcal O\in \mathcal N_o}c_{\mathcal O}(\pi)\hat I_{\mathcal O}(f\circ\exp).$$
\end{theorem}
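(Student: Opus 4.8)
The plan is to reduce the statement for the cover $\overline G$ to DeBacker's local character expansion on the linear group $G$, \cite{debacker}, exploiting the fact that on a suitable small neighbourhood the relevant analytic objects (the trace distribution restricted to $C_c^\infty(\tilde G_{r^+})$, the exponential map, and the Fourier transform of orbital integrals) all factor through $G$. First I would observe that, since $\mathrm{GCD}(n,p)=1$, each Moy--Prasad subgroup $G_{x,r^+}$ is a pro-$p$ group and hence lifts uniquely and functorially to $\tilde G_{x,r^+}\subset\overline G$ (this is already recorded in Subsection \ref{sec:moyprasad}), and moreover the lift is normalized by $\overline G$ acting through $G$. Consequently the splitting $\exp:\mathfrak g_{r^+}\to\tilde G_{r^+}$ is $\overline G$-equivariant, $\tilde G_{r^+}$ is open in $\overline G$, and $C_c^\infty(\tilde G_{r^+})$ can be viewed inside $C_c^\infty(\overline G)$. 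The key point is that the restriction of the genuine character $\Theta_\pi$ to $C_c^\infty(\tilde G_{r^+})$ only sees the restriction of $\pi$ to the subgroup $\tilde G_{r^+}$, and on this subgroup $\mu_n$ acts trivially (the splitting kills $\mu_n$), so the data is honestly that of a representation-theoretic distribution pulled back from $G$.

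The main step is then to invoke DeBacker's theorem \cite{debacker} in the form that for a depth-$r$ representation the character is, on $C_c^\infty(G_{r^+})$, a finite linear combination $\sum_{\mathcal O}c_{\mathcal O}\hat I_{\mathcal O}(\,\cdot\circ\exp)$ of Fourier transforms of nilpotent orbital integrals, and to check that all three hypotheses \ref{hyp:exp}, \ref{hyp:lift}, \ref{hyp:bilin} are exactly the hypotheses under which DeBacker's argument runs: Hypothesis \ref{hyp:exp} supplies the exponential (or mock-exponential) map, Hypothesis \ref{hyp:lift} is DeBacker's running hypothesis from \cite[\S4.2]{debacker-nilpotent} guaranteeing good residue characteristic for all the reductive quotients $G_{x,0}/G_{x,0^+}$ (needed to parametrize and lift nilpotent orbits), and Hypothesis \ref{hyp:bilin} provides the non-degenerate invariant bilinear form $B$ needed to define the Fourier transform and identify $\mathfrak g\simeq\mathfrak g^*$. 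I would recall that the set of nilpotent orbits of $\overline G$ and of $G$ coincide (noted in the DeBacker-lifting subsection), so the index set $\mathcal N_o$ on the right-hand side is unambiguous. The coefficients $c_{\mathcal O}(\pi)$ are then \emph{defined} to be the coefficients produced by this expansion applied to the pushforward of $\Theta_\pi$ to $G$.

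For the actual argument I would proceed: (i) show $\tilde G_{r^+}$ is an open subgroup of $\overline G$ and that restriction of distributions to $C_c^\infty(\tilde G_{r^+})$ is well-defined on genuine characters; (ii) identify, via the unique splitting, $C_c^\infty(\tilde G_{r^+})\cong C_c^\infty(G_{r^+})$ as convolution-compatible spaces and note $\Theta_\pi|_{\tilde G_{r^+}}$ corresponds to a distribution on $G_{r^+}$ that is the restriction of the trace of the depth-$r$ representation $\pi$ viewed via the action of $\tilde G_{r^+}$; (iii) verify that this distribution is $G$-invariant (conjugation-invariant) in the appropriate sense, which is automatic since conjugation in $\overline G$ acts on $\tilde G_{r^+}$ through $G$; (iv) apply DeBacker's theorem on $G$ verbatim. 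The one genuinely non-formal point — the part I expect to be the main obstacle — is (ii)--(iii): one must be careful that ``depth $r$'' as defined for $\overline G$ (minimality of $r$ with $\pi^{\tilde G_{x,r^+}}\ne 0$) is the notion of depth for which DeBacker's domain-of-validity statement holds, i.e. that the local expansion is valid precisely on $C_c^\infty(\tilde G_{r^+})$ and not merely on some deeper congruence neighbourhood; this requires knowing that the fixed-vector/depth theory of Moy--Prasad transports through the splitting, which it does because the splittings $s_{x,r}$ are canonical and compatible, but it should be spelled out. The remaining points are formal consequences of the unique-splitting property and the functoriality of the Moy--Prasad filtration.
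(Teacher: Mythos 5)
There is a genuine gap, and it sits exactly at your step (iv). You propose to transport $\Theta_\pi|_{\tilde G_{r^+}}$ through the canonical splitting to a conjugation-invariant distribution on $G_{r^+}$ and then ``apply DeBacker's theorem on $G$ verbatim,'' defining the coefficients $c_{\mathcal O}(\pi)$ as the output of that expansion. But DeBacker's theorem is not a statement about arbitrary invariant distributions on $G_{r^+}$: its input is a smooth irreducible representation of the \emph{linear} group $G$ of depth $r$, and your transported distribution is not the character of any $G$-representation (the splitting exists only over the pro-$p$ Moy--Prasad subgroups, not over $G$, so $\pi$ does not descend). An arbitrary invariant distribution on $\mathfrak g_{r^+}$ need not lie in the span of the $\hat I_{\mathcal O}$ for nilpotent $\mathcal O$; the whole content of the domain-of-validity statement is the representation-theoretic fact that $\hat\Theta_\pi$ restricted appropriately lies in the space $\tilde J_{-r}$ of \cite{debacker}. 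That fact is proved in \cite[Lemma 3.2.2]{debacker} using Moy--Prasad's result \cite[\S7.2]{moyprasad} that for $s>r$ every character of $G_{x,s}/G_{x,s^+}$ occurring in $\pi^{G_{x,s^+}}$ is degenerate, an argument that quantifies over all points $x$ of the building and uses the representation $\pi$ itself, not merely the germ of its character near the identity. You flag the depth/domain issue as ``the main obstacle'' but propose to resolve it by noting that depth transports through the splitting; that identification of depths does not by itself let you quote the linear-group theorem, because there is still no $G$-representation to feed into it.

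The paper's proof does what your reduction cannot: it re-runs DeBacker's argument for the cover. Concretely, it observes that the Moy--Prasad degeneracy argument goes through verbatim for genuine representations of $\overline G$ (using the canonical splittings $s_{x,r}$ over the pro-$p$ subgroups), deduces the analogue of \cite[Remark 3.5.1]{debacker} that $\hat\Theta_\pi\in\tilde J_{-r}$, and only then notes -- as you also correctly observe in your step (iii) -- that once the problem is on the Lie algebra the adjoint action of $\overline G$ factors through $G$, so $G$- and $\overline G$-equivariance coincide and the purely Lie-algebraic homogeneity part of DeBacker's proof applies unchanged. Your matching of hypotheses \ref{hyp:exp}, \ref{hyp:lift}, \ref{hyp:bilin} to the ingredients of DeBacker's argument is fine, but to repair the proof you must replace step (iv) by a verification that the key lemma (\cite[Lemma 3.2.2]{debacker}, via \cite[\S7.2]{moyprasad}) holds for the genuine representation $\pi$ of $\overline G$, rather than citing the theorem for $G$ as a black box.
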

\begin{proof}
    The main ingredient in Debacker's proof of this result for linear groups in \cite{debacker} is \cite[Lemma 3.2.2]{debacker}.
    The proof of this lemma relies on the result in \cite[\S7.2]{moyprasad} which states that if $\chi$ is a character of $\tilde G_{x,s}/\tilde G_{x,s^+}$ lying in $\pi^{\tilde G_{x,s^+}}$ and $s>r$ then $\chi$ is degenerate.
    In fact the proof in the linear case works for covering groups as well, and with this in hand, we can deduce that \cite[Remark 3.5.1]{debacker} stating $\hat \Theta_\pi\in \tilde J_{-r}$ (c.f. \cite{debacker} for the notation) holds.
    
    Once the problem is tranferred to the Lie algebra, since the adjoint action of $\overline G$ on $\mathfrak g$ factors surjectively through $G$, the notions of $G$ and $\overline G$-equivariance coincide and so the rest of the proof follows verbatim.
\end{proof}

\subsection{The wave front set and test functions}
\begin{definition}
    Let $(\pi,V)$ be a smooth admissible representation of $G$ of depth $r$.
    By theorem \ref{thm:local-char-expansion} there exists unique constants $c_{\mathcal O}(\pi)\in \mathbb C$ for all $\mathcal O\in \mathcal N_o$ such that for all $f\in C_c^\infty(\tilde G_{r^+})$ we have
    $$\Theta_\pi(f) = \sum_{\mathcal O\in \mathcal N_o}c_{\mathcal O}(\pi)\hat I_{\mathcal O}(f\circ\exp).$$
    The wave front set of $\pi$ is defined to be 
    $$\WF(\pi) = \max\{\mathcal O:c_{\mathcal O}(\pi)\ne 0\}$$
    where the maximum is with respect to the closure ordering. 
    
    The \emph{stable wave front set} $\WF^s(\pi)$ is defined to be the largest stable orbits (with respect to the \emph{Zariski} closure) that contain a nilpotent orbit $\mathcal O$ such that $c_{\mathcal O}(\pi)\ne 0$.
\end{definition}

In order to compute the wave front set, we wish to apply the ideas of \cite{barmoy} .
However there is an obstruction to applying their ideas and that is that in general $\overline G_{x,0}/\tilde G_{x,0^+}$ may be a non-linear cover of a reductive group over $\mathbb F_q$.
However, as noted in section \ref{ParaSubs}, for $\overline G^{u}$ the universal tame extension of $G$, it is a \emph{reductive} central $\mathbb G_m$-extension of $G_{x,0}/G_{x,0^+}$.
Since it is reductive we can define the same test functions used in \cite{barmoy}.
In the same way as in \cite{okada} we then obtain the following definitions and results.
\begin{definition}\label{def:kwf}
    Let $\overline G^{u}$ denote the universal tame extension of $G$.
    
    For a nilpotent orbit $\underline{\mathcal O}$ of $\overline G_{x,0}^{u}/G_{x,0^+}$ let $\Gamma_{x,\underline{\mathcal O}}$ denote the generalised Gelfand-Graev representation of $\overline G_{x,0}^{u}/G_{x,0^+}$ attached to $\underline{\mathcal O}$.

    Recall that the Kawanaka wave front set of a representation $\pi$ of $\overline G_{x,0}^{u}/G_{x,0^+}$ is defined to be the maximal stable orbit containing a nilpotent orbit $\underline{\mathcal O}$ such that $\langle \Gamma_{x,\underline{\mathcal O}},\pi\rangle \ne 0$.
    We denote the Kawanaka wave front set of $\pi$ by 
    $$\WF^s(\pi).$$
\end{definition}
\begin{theorem}\label{StableWFS}
    Suppose hypotheses \ref{hyp:exp}, \ref{hyp:lift} and \ref{hyp:bilin} hold.
    
    Let $\mathbf G$ be a Chevalley group defined over $\mathbb Z$.
    Let $G = \mathbf G(F)$ and let $\overline G^{u}$ be the universal tame extension of $G$.
    
    Let $\mathscr C$ denote a set of points in the building of $G$ so that every nilpotent orbit is in the image of $\mathscr L_x$ for some $x\in \mathscr C$.
    
    Let $(\pi,V)$ be a depth-$0$ smooth admissible representation of $\overline G^{u}$.
    Let $\underline \pi_{x}$ denote the $\overline{G}_{x,0}^{u}/G_{x,0^+}$-representation $\pi^{G_{x,0^+}}$.
    
    Then
    $$\WF(\pi) = \max\cup_{x\in \mathscr C}\{\mathscr L_{x}(\underline {\mathcal O}): \langle \Gamma_{x,\underline {\mathcal O}}, \underline{\pi}_x\rangle \ne 0\}.$$
    In particular $(-)(\mathbb C)$ of the stable wave front set can be calculated as
    $$\WF^s(\pi)(\mathbb C) = \max \{\mathrm{Sat}_{\mathbb G_x}^{\mathbb G}(\WF^s(\sigma)(\mathbb C)):\sigma\subset \underline{\pi}_x \text{ irreducible}, x\in \mathscr C\}.$$
\end{theorem}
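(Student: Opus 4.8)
The plan is to deduce this theorem from DeBacker's homogeneity result (Theorem \ref{thm:local-char-expansion}), the adaptation of the Barbasch–Moy test-function argument to the reductive extension $\overline G^u$, and DeBacker's lifting map, essentially following the template of \cite{barmoy} and \cite{okada} but tracking the cover throughout. First I would fix a depth-$0$ representation $(\pi,V)$ of $\overline G^u$ and recall that, by Theorem \ref{thm:local-char-expansion}, its character near the identity is governed by the local character expansion $\Theta_\pi(f)=\sum_{\mathcal O\in\mathcal N_o}c_{\mathcal O}(\pi)\,\hat I_{\mathcal O}(f\circ\exp)$, valid for $f\in C_c^\infty(\tilde G_{0^+})$; since the adjoint action of $\overline G^u$ on $\mathfrak g$ factors through $G$, the orbit set and all the equivariance used here is unchanged from the linear case. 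The key point is then to test this expansion against functions supported on the parahoric filtration quotients: for each $x$ in the building, the test functions of \cite{barmoy} are defined on $G_{x,0}/G_{x,0^+}$, and because $\overline G^u_{x,0}/G_{x,0^+}$ is a genuinely \emph{reductive} central $\mathbb G_m$-extension of this quotient (as recalled in Section \ref{ParaSubs}), the same functions — inflated to the cover via the idempotent projecting onto the genuine part — make sense on $\overline G^u_{x,0}/G_{x,0^+}$. This is exactly the obstruction flagged just before the statement, and its resolution is the use of $\overline G^u$ rather than $\overline G$.

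Next I would carry out the comparison. On the finite-group side, pairing the inflated generalized Gelfand–Graev representation $\Gamma_{x,\underline{\mathcal O}}$ with $\underline\pi_x=\pi^{G_{x,0^+}}$ computes, by definition, the Kawanaka wave front set of $\underline\pi_x$. On the $p$-adic side, the Barbasch–Moy computation identifies the pairing of $\Theta_\pi$ with the lift of the GGGR test function in terms of the coefficients $c_{\mathcal O}(\pi)$: one gets that $\langle\Gamma_{x,\underline{\mathcal O}},\underline\pi_x\rangle\neq0$ forces some $c_{\mathcal O}(\pi)\neq 0$ with $\mathcal O\leq\mathscr L_x(\underline{\mathcal O})$, and conversely that every $\mathcal O$ with $c_{\mathcal O}(\pi)\neq 0$ is dominated by $\mathscr L_x(\underline{\mathcal O})$ for some $x\in\mathscr C$ and some $\underline{\mathcal O}$ with nonvanishing pairing — here one uses that $\mathscr C$ is chosen so that every nilpotent orbit lies in the image of some $\mathscr L_x$, together with the partial-order compatibility between $\mathscr L_x$ and the closure order. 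Taking maxima over the closure order yields the first displayed formula $\WF(\pi)=\max\bigcup_{x\in\mathscr C}\{\mathscr L_x(\underline{\mathcal O}):\langle\Gamma_{x,\underline{\mathcal O}},\underline\pi_x\rangle\neq 0\}$. Feeding in that $\Gamma_{x,\underline{\mathcal O}}$ decomposes over its GGGR-constituents and that $\langle\Gamma_{x,\underline{\mathcal O}},\underline\pi_x\rangle\neq0$ iff some irreducible $\sigma\subset\underline\pi_x$ has $\underline{\mathcal O}$ below its Kawanaka wave front set, one rewrites the right-hand side as a max over irreducible constituents $\sigma\subset\underline\pi_x$.

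Finally I would translate to the stable/Zariski picture using Lemma \ref{lem:lifting}. Applying $(-)(\mathbb C)$ to stable orbits, part (3)(b) of that lemma says $\mathscr L_x(\underline{\mathcal O})(\mathbb C)=\mathrm{Sat}_{\mathbb G_x}^{\mathbb G}(\underline{\mathcal O}(\mathbb C))$, and parts (1)–(2) and (3)(a) guarantee that the relevant bijections are order-preserving for the Zariski closure, so that maxima are preserved under passing between the two sides. Combining this with the constituent-wise reformulation of the previous paragraph gives
$$\WF^s(\pi)(\mathbb C)=\max\{\mathrm{Sat}_{\mathbb G_x}^{\mathbb G}(\WF^s(\sigma)(\mathbb C)):\sigma\subset\underline\pi_x\text{ irreducible},\ x\in\mathscr C\},$$
which is the second display. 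The main obstacle I anticipate is not any single step in isolation but verifying that the Barbasch–Moy test-function machinery — in particular the degeneracy statement from \cite[\S7.2]{moyprasad} and the containment $\hat\Theta_\pi\in\tilde J_{-r}$ — genuinely transfers to the reductive cover $\overline G^u$ in the form needed to match the finite-group GGGR pairings; once that is in place, the rest is bookkeeping with closure orders and the lifting bijections of Lemma \ref{lem:lifting}.
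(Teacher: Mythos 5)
Your proposal matches the paper's approach: the paper gives no separate argument for Theorem \ref{StableWFS}, obtaining it exactly as you do from the local character expansion of Theorem \ref{thm:local-char-expansion}, the Barbasch--Moy/Okada test-function machinery made available on the cover because $\overline G^{u}_{x,0}/G_{x,0^+}$ is a genuinely reductive central extension of $G_{x,0}/G_{x,0^+}$, and the order-preserving lifting/saturation statements of Lemma \ref{lem:lifting} for the passage to the stable ($\mathbb C$-orbit) formulation. The only small imprecision is your ``iff'' between $\langle\Gamma_{x,\underline{\mathcal O}},\underline{\pi}_x\rangle\ne 0$ and $\underline{\mathcal O}$ lying below the Kawanaka wave front set of some irreducible constituent --- only one implication holds in general --- but since GGGR multiplicities are non-negative (so there is no cancellation over constituents) and the Kawanaka wave front set is itself attained with nonzero pairing, the two maxima coincide and your argument is unaffected.
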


The following lemma reduces the computation of the wave front set to the case of the universal tame extension.
\begin{lemma}\label{WFPullBack}
    Let $\overline G^u$ denote the universal tame extension of $G$ and $(\pi,V)$ be a representation of $\overline G$.
    Let $\pi^u$ denote the representation of $\overline G^u$ obtained by pulling $\pi$ back along $\overline G^u\to \overline G$.
    Then 
    $$\WF(\pi) = \WF(\pi^u).$$
\end{lemma}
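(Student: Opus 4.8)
The plan is to trace through the definition of the wave front set on both sides and observe that every ingredient is insensitive to the distinction between $\overline G$ and $\overline G^u$, because the difference between the two is a central subgroup $\mu_{\frac{q-1}{n}}$ that is \emph{prime to $p$}. First I would recall from the diagram \eqref{CExtsAll} that $\overline G^u$ is a central extension of $\overline G$ by $\mu_{\frac{q-1}{n}}$, so the covering map $\overline G^u\to \overline G$ has kernel a finite abelian group of order prime to $p$; in particular it induces an isomorphism on the maximal pro-$p$ subgroups $\tilde G^u_{x,r^+}\xrightarrow{\sim}\tilde G_{x,r^+}$ for every $x$ and every $r\ge 0$ (this is exactly the unique-splitting phenomenon already used to define the $s_{x,r}$ in Section \ref{sec:moyprasad}). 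Consequently $\pi$ and $\pi^u$ have the same depth $r$, since $\pi^{u,\tilde G^u_{x,r^+}}=\pi^{\tilde G_{x,r^+}}$ canonically.

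Next I would compare the character distributions. The pullback $\pi^u$ factors through $\overline G$ by construction, so for any $f\in C_c^\infty(\tilde G^u_{r^+})$, identifying $\tilde G^u_{r^+}$ with $\tilde G_{r^+}$ via the isomorphism above, one has $\Theta_{\pi^u}(f)=\Theta_\pi(\bar f)$ where $\bar f$ is the corresponding function on $\tilde G_{r^+}\subset\overline G$; more precisely the pushforward of Haar measure under $\overline G^u\to\overline G$ matches up the two integrals because the kernel is finite. Since the adjoint action of $\overline G^u$ on $\mathfrak g$ factors through $G$ just as for $\overline G$, the Fourier transforms $\hat I_{\mathcal O}$ and the exponential maps $\exp:\mathfrak g_{r^+}\to \tilde G_{r^+}$ agree on both sides. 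Applying Theorem \ref{thm:local-char-expansion} to $\pi$ and to $\pi^u$ and using the uniqueness of the constants $c_{\mathcal O}$, we conclude $c_{\mathcal O}(\pi^u)=c_{\mathcal O}(\pi)$ for every $\mathcal O\in\mathcal N_o$. Taking the maximum over the support of these constants gives $\WF(\pi)=\WF(\pi^u)$.

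I do not expect any serious obstacle here; this is essentially a bookkeeping argument. The one point that needs a little care is the measure-matching in the identification $\Theta_{\pi^u}(f)=\Theta_\pi(\bar f)$ — one must be sure the normalizations of Haar measure (measure of $I^+$, or of $\tilde G_{x,r^+}$, equal to $1$) are compatible under $\overline G^u\to\overline G$, which holds precisely because the kernel $\mu_{\frac{q-1}{n}}$ meets $\tilde G^u_{x,r^+}$ trivially (again by $\mathrm{GCD}(\tfrac{q-1}{n},p)=1$). With that observation the rest is immediate, and in fact the same argument shows $c_{\mathcal O}(\pi^u)=c_{\mathcal O}(\pi)$ orbit-by-orbit, which is a slightly stronger statement than what is asserted.
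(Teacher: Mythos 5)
Your argument is correct and follows essentially the same route as the paper's proof: identify $\tilde G^u_{r^+}$ with $\tilde G_{r^+}$ (possible since the kernel $\mu_{\frac{q-1}{n}}$ is prime to $p$), observe that $\Theta_{\pi^u}$ and $\Theta_\pi$ agree under the induced isomorphism of test-function spaces, transfer to $C_c^\infty(\mathfrak g_{r^+})$, and invoke uniqueness in the local character expansion of Theorem \ref{thm:local-char-expansion} to match the coefficients $c_{\mathcal O}$ orbit by orbit. The extra care you take with depths and measure normalizations is exactly the bookkeeping the paper leaves implicit, so nothing further is needed.
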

\begin{proof}
    For the proof of this lemma we adopt the notation in Section \ref{sec:moyprasad}.
    
    Since $\pi^u$ is obtained from $\pi$ by inflating along $p:\overline G^u\to \overline G$, the distributions 
    $$\Theta_{\pi}:C_c^\infty(\tilde G_{r^+})\to \mathbb C$$
    and
    $$\Theta_{\pi^u}:C_c^\infty(\tilde G^u_{r^+})\to \mathbb C$$
    agree under the isomorphism
    $$C_c^\infty(\tilde G_{r^+})\xrightarrow{\sim}C_c^\infty(\tilde G^u_{r^+}), \quad f\mapsto f\circ p.$$
    It follows that the associated distributions on $C_c^\infty(\mathfrak g_{r^+})$ coincide. Hence they must have the same local character expansions, and so the same wave front sets.
\end{proof}

%%%%%%%%%%%%%%%%%%%%%%%%%%%%%%%%%%%%%%%%%%%%%%%
%%%%%%%%%%%%%%%%%%%%%%%%%%%%%%%%%%%%%%%%%%%%%%%
%Theta Representation
%%%%%%%%%%%%%%%%%%%%%%%%%%%%%%%%%%%%%%%%%%%%%%%
%%%%%%%%%%%%%%%%%%%%%%%%%%%%%%%%%%%%%%%%%%%%%%%

\section{Theta Representations}\label{Theta}

In this section we define the theta representations of $\overline{G}$ and reduce the computation of its wave front set to a combinatorial problem, which is solved in Section \ref{WFSComp}.

We assume hypotheses \ref{hyp:exp}, \ref{hyp:lift} and \ref{hyp:bilin} throughout, so we can apply Theorem \ref{StableWFS}.

\subsection{Basic properties of theta representations}\label{ThetaPropSS}

For $\nu\in X\otimes \mathbb{R}$ there is a unique character $\delta_{\nu}:T\rightarrow\mathbb{R}_{>0}$ such that for $y\in Y$ and $a\in F^{\times}$
\begin{equation*}
\delta_{\nu}(y\otimes a)=|a|^{\nu(y)}.
\end{equation*}

\begin{definition}
    \label{def:exceptional}
    A character $\nu\in X\otimes \mathbb{R}$ is called exceptional if $\nu(\alpha_{Q,n}^{\vee})=1$ for all $\alpha\in \Delta$. 
\end{definition}

Let $\nu\in X\otimes \mathbb{R}$ be an exceptional character. The theta representation $\Theta(\pi^{\dagger},\nu)$ of $\overline{G}$ attached to $\pi^{\dagger}$ and exceptional character $\nu$ is the unique Langlands quotient of the (normalized) principal series $\mathrm{Ind}_{\overline{B}}^{\overline{G}}(\pi^{\dagger}\otimes \delta_{\nu})$. (The Langlands quotient theorem for covers was proved in \cite{BJ13} and is applicable because $\nu$ is exceptional.)
When there is no possibility for confusion we may write $\Theta$ for $\Theta(\pi^{\dagger},\nu)$. We may also write $\Theta(\chi^{\dagger},\nu)$, where $\chi^{\dagger}:Z(\overline{T})\rightarrow \mathbb{C}^{\times}$ is the distinguished genuine central character of $\pi^{\dagger}$.

The next proposition collects some properties of $\Theta$; item \eqref{TProp6} is crucial for the computation of the wave front set.

\begin{proposition}\label{ThetaBasics} $ $ Let $\chi\in \mathscr{Z}$. We also write $\chi$ for the inflation of $\chi$ to $\overline{I}$, which is trivial on $I^{+}$.
\begin{enumerate}
\item\label{TProp1} As $\overline{T}$-representations, $\Theta_{U}\cong \pi^{\dagger}\otimes \delta_{B}^{1/2}\cdot \delta_{w_{\ell}\cdot \nu}$, where $\Theta_{U}$ is the Jacquet module of $\Theta$ with respect to $U$.
\item\label{TProp2} The natural quotient map $\Theta\twoheadrightarrow \Theta_{U}$ induces an isomorphism of vector spaces, $\Theta^{I^{+}}\cong (\Theta_{U})^{T_{\mathfrak{o}}^{+}}=\Theta_{U}$. In particular, $\mathrm{dim}(\Theta^{I^{+}})=|\mathscr{X}|=|\mathscr{Z}|$.
\item\label{TProp3} Similarly, as vector spaces $\Theta^{(\overline{I},\chi)}\cong (\Theta_{U})^{(\overline{T}_{\mathfrak{o}},\chi)}$. In particular, $\mathrm{dim}(\Theta^{(\overline{I},\chi)})=1$.
\item\label{TProp4} As $\overline{I}$-modules, $\Theta^{(\overline{I},\chi)}\cong \chi$.
\item\label{TProp5} As $\overline{I}$-modules, $\Theta^{I^{+}}\cong \oplus\chi^{\prime}$, where the sum is over $\mathscr{Z}$.
\item\label{TProp6} As an $\mathscr{H}_{\chi}\subset \mathcal{H}_{\chi}$ module, $\Theta^{(\overline{I},\chi)}$ is isomorphic to the index character.
\end{enumerate}
\end{proposition}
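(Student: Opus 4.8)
The goal is to show that, under the isomorphism $\mathscr{H}(W_\chi^0,S_\chi)\xrightarrow{\sim}\mathscr{H}_\chi$ of Theorem \ref{IMPres}, the one-dimensional $\mathscr{H}_\chi$-module $\Theta^{(\overline{I},\chi)}$ (one-dimensional by items \eqref{TProp3} and \eqref{TProp4}) is the index character. Write $\lambda:\mathscr{H}(W_\chi^0,S_\chi)\to\mathbb{C}$ for the resulting character. The quadratic relation forces $\lambda(t_s)\in\{q,-1\}$ for every $s\in S_\chi$, and $\lambda(t_s)=\lambda(t_{s'})$ whenever $s,s'$ are conjugate in $W_\chi^0$; hence it is enough to prove $\lambda(t_{s_a})=q$ for one representative $a$ of each $W_\chi^0$-conjugacy class in $\Delta_\chi$.

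The conceptual input is that, since $\nu$ is exceptional, $\mathrm{Ind}_{\overline{B}}^{\overline{G}}(\pi^{\dagger}\otimes\delta_{\nu})$ sits at a reducibility point with $\Theta$ as its Langlands quotient. As $(-)^{I^+}$ and then $e_\chi(-)$ are exact on smooth $\overline{G}$-modules (recall $p\nmid n$), $\Theta^{(\overline{I},\chi)}$ is the unique irreducible quotient, as $\mathcal{H}_\chi$-module, of the standard module $M:=\mathrm{Ind}_{\overline{B}}^{\overline{G}}(\pi^{\dagger}\otimes\delta_{\nu})^{(\overline{I},\chi)}$. By items \eqref{TProp1}--\eqref{TProp3} --- that is, by Casselman theory, available for tame covers through \cite{GGK,Wang24} --- this quotient is realised as the line $(\Theta_U)^{(\overline{T}_{\mathfrak{o}},\chi)}$ inside the bottom-layer Jacquet module $\Theta_U\cong\pi^{\dagger}\otimes\delta_B^{1/2}\cdot\delta_{w_{\ell}\nu}$, and in the geometric basis of $M$ it is spanned by the image of the identity-cell vector (the sign-character constituent, when it occurs, being a submodule rather than the quotient). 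This is precisely the mechanism by which the trivial representation of a split group carries the index character of its Iwahori--Hecke algebra; in particular it recovers the claim when $n=1$, where $\Theta=\mathrm{triv}$ and $\chi$ is trivial.

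To make this into a computation of $\lambda(t_{s_a})$, I would, for each $a\in\Delta_\chi$, pass to the rank-one parahoric $\overline{P}_a$ associated to $a$. By Moy--Prasad theory and Proposition \ref{ThetaPmod}, the module $\Theta^{P_a^+}$ over the reductive quotient $\overline{P}_a/P_a^+$ --- a cover of $\mathrm{SL}_2(\mathfrak{f})$ modulo its centre --- is a sum of constituents of principal series, the distinguished vector of $\Theta^{(\overline{I},\chi)}$ sitting in the ``trivial-like'' constituent. A direct $\mathrm{SL}_2$-level calculation, using $T_{\dot{s}_a}^2=(q-1)T_{\dot{s}_a}+q$ and the explicit $c$-function at the antidominant parameter $\delta_{w_{\ell}\nu}$, then shows $T_{\dot{s}_a}$ acts on this vector by a scalar which, after the normalisation $q^{(\ell_\chi(s_a)-\ell(s_a))/2}\widehat{\chi}(\dot{s}_a)$ of Theorem \ref{IMPres}, equals $q$. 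Hence $\lambda(t_{s_a})=q$ for all $a\in\Delta_\chi$, so $\lambda=\mathrm{ind}$.

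The step I expect to be the main obstacle is the last one: reconciling the normalisations built into Theorem \ref{IMPres} (the powers of $q^{1/2}$ and the twist by $\widehat{\chi}$) with the explicit $c$-functions and parahoric computations so that the index character, and not the sign character, is the one that survives, and carrying this out uniformly over all of $\Delta_\chi$, including the genuinely affine simple reflections. This is exactly where the fine structure of $\mathcal{H}$, $\mathcal{H}_\chi$ and $\mathscr{H}_\chi$ from \cite{GGK,Wang24} enters, beyond the abstract Borel--Casselman equivalence.
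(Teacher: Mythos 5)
Your treatment of items (1)--(5) matches the paper (they are quoted from \cite{Gao20}, \cite{GGK} and Borel--Casselman/Bushnell--Kutzko theory), so the only real content is item (6), and there your proposal has a genuine gap: the step that decides between $q$ and $-1$ is exactly the step you defer, and the route you sketch for it does not work as stated. You propose to attach to each $a\in\Delta_\chi$ a rank-one parahoric $\overline P_a\supset \overline I$ and settle $\lambda(t_{s_a})$ by an $\mathrm{SL}_2$-level $c$-function computation inside $\Theta^{P_a^+}$. But $\Delta_\chi$ consists of the simple affine roots of the \emph{enlarged} alcove $\mathscr A_{\chi,0}$, and its walls $H_a$ need not meet $\overline{\mathscr A}_0$: already in rank one the walls of $\mathscr A_{\chi,0}$ sit at $\langle\alpha,\cdot\rangle\in c+n\mathbb Z$ for an integer $c$ determined by $\chi$, and for $c\not\equiv 0,1 \bmod n$ neither wall touches $\overline{\mathscr A}_0$. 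Since $s_a$ lies in $W_P$ for a parahoric $P\supset I$ precisely when $H_a$ meets $\overline{\mathscr A}_0$, for such $a$ there is no rank-one parahoric containing $I$ realizing $s_a$, and Proposition \ref{ThetaPmod} says nothing about $t_{s_a}$ directly; one must first translate by elements of $Y$, which is exactly where Bernstein-type elements and the normalisations $q^{(\ell_\chi-\ell)/2}\widehat\chi$ enter --- the part you flag as the ``main obstacle'' and do not carry out. In addition, the assertion that the distinguished vector sits in the ``trivial-like'' constituent of $\Theta^{P_a^+}$ cannot be taken as an input: in the paper that identification (Proposition \ref{ThetaParaHecke}, Corollary \ref{ParaKWFS}) is \emph{deduced from} item (6), and the Langlands-quotient/identity-cell heuristic alone does not determine which one-dimensional character of the subalgebra $\mathscr H_\chi\subset\mathcal H_\chi$ the quotient carries.

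For comparison, the paper avoids rank-one analysis entirely. Using the isomorphism $\Theta^{(\overline I,\chi)}\cong(\Theta_U)^{(\overline T_{\mathfrak o},\chi)}$ from (3) and the explicit Jacquet module from (1), it shows that for $y\in Y^{sc}_{Q,n}\cap W^0_\chi$ dominant the Bernstein-type element satisfies $\chi^\dagger(\mathbf s_y)^{-1}\theta_{\mathbf s_y}e_\chi\cdot v=q^{\langle\rho_{Q,n},y\rangle}v$, identifies $q^{\langle\rho_{Q,n},y\rangle}\chi^\dagger(\mathbf s_y)^{-1}\theta_{\mathbf s_y}e_\chi$ with $t_y$ in the presentation of Theorem \ref{IMPres}, and then invokes the length formula $\ell_\chi(y)=\langle 2\rho_{Q,n},y\rangle$ (Iwahori--Matsumoto, \cite[Proposition 4.2]{Wang24}, Corollary \ref{TrivOnModCoRoots}): since $t_y$ is a product of $\ell_\chi(y)$ generators each acting by $q$ or $-1$ on the line, and $t_y$ acts by exactly $q^{\ell_\chi(y)}$, every generator acts by $q$. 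This is where the exceptionality of $\nu$ is used quantitatively (through $\rho_{Q,n}$), and it is precisely the computation your sketch leaves unproved; to complete your approach you would in effect have to reproduce it.
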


\begin{proof}
(\ref{TProp1}) follows from \cite{Gao20}, Theorem 3.6, (i).

(\ref{TProp2}) follows from \cite{GGK}, Corollary 3.5, and Bushnell-Kutzko type theory \cite{BK98}, which is based on an argument that goes back to Borel-Casselman.

(\ref{TProp3}) is the same as (\ref{TProp2}).

(\ref{TProp4}) and (\ref{TProp5}) follow from (\ref{TProp1}) along with (\ref{TProp3}) and (\ref{TProp2}) respectively. 

(\ref{TProp6}) follows by combining (\ref{TProp1}) and (\ref{TProp3}). We add a few more details.

Let $f:\Theta^{(\overline{I},\chi)} \rightarrow \Theta_{U}^{(\overline{T}_{\mathfrak{o}},\chi)}$ be the vector space isomorphism of (\ref{TProp3}). Let $y\in Y_{Q,n}^{sc}$. Define $\theta_{\mathbf{s}_{y}}\in\mathcal{H}$ to be the element denoted by $\Theta_{\mathbf{s}_{y}}$ in \cite[Section 4.1]{GGK}.

The map $f$ intertwines the action of the element $\mathbf{s}_{y}$ on $\Theta_{U}^{(\overline{T}_{\mathfrak{o}},\chi)}$ with the action of the element $\theta_{\mathbf{s}_{y}}$ on $\Theta^{(\overline{I},\chi)}$. Specifically, for any $v\in\Theta^{(\overline{I},\chi)}$ a direct calculation shows

\begin{equation*}
    f(\theta_{\mathbf{s}_{y}}e_{\chi}\cdot v)=q^{\langle\rho,y\rangle}\mathbf{s}_{y}\cdot f(v).
\end{equation*}

By \eqref{TProp1}, $\mathbf{s}_{y}\cdot f(v)=q^{\langle\rho_{Q,n}-\rho,y\rangle}\chi^{\dagger}(\mathbf{s}_{y})f(v)$, thus
\begin{equation*}
    \chi^{\dagger}(\mathbf{s}_{y})^{-1}\theta_{\mathbf{s}_{y}}e_{\chi}\cdot v=q^{\langle\rho_{Q,n},y\rangle}v.
\end{equation*}

%By \cite[Lemma 4.14]{GGK}, we see that the $\mathrm{span}\{\chi^{\dagger}(\mathbf{s}_{y})^{-1}\theta_{\mathbf{s}_{y}}e_{\chi}|y\in Y_{Q,n}\}$ is a subalgebra of $\mathcal{H}_{\chi}$ isomorphic to $\mathbb{C}[Y_{Q,n}]$, and in fact this is Bernstein's subalgebra of $\mathcal{H}_{\chi}$.

The subalgebra $\mathscr{H}_{\chi}$ has an Iwahori-Matsumoto presentation, described in Theorem \ref{IMPres}. We restrict the action of $\mathcal{H}_{\chi}$ to $\mathscr{H}_{\chi}$. Each generator $t_{s_{a}}$, where $s_{a}\in \Delta_{\chi}$, can only act by the value $-1$ or $q$.

Let $y\in Y_{Q,n}^{sc}\cap W_{\chi}^{0}$ be dominant with respect to $\Delta$. Then $q^{\langle\rho_{Q,n},y\rangle}\chi^{\dagger}(\mathbf{s}_{y})^{-1}\theta_{\mathbf{s}_{y}}e_{\chi}=t_{y}$. Thus $t_{y}$ acts by $q^{\langle2\rho_{Q,n},y\rangle}$. 

Now $\ell_{\chi}(y)=\langle2\rho_{Q,n},y\rangle$ by \cite[Proposition 1.23]{IM}, \cite[Proposition 4.2]{Wang24}, and Corollary \ref{TrivOnModCoRoots}. Thus each $t_{s_{a}}$, where $s_{a}\in \Delta_{\chi}$, must act by $q$, as desired.
\end{proof}

\begin{comment}
\begin{remark}
    We note that if \cite[Theorem 1.1]{BM89} generalizes to covering groups, then Proposition \ref{ThetaBasics}, (\ref{TProp6}) and implies that $\Theta$ is a unitary representation.
\end{remark}
\end{comment}

Let $P\subset G$ be a parahoric subgroup containing $I$. Our next objective is to describe $\Theta^{P^{+}}$ as a $\overline{P}/P^{+}$-module.

\begin{proposition}\label{ThetaPmod}
    As $\overline{P}/P^{+}$-modules
    \begin{equation*}
        \Theta^{P^{+}}\simeq \oplus_{\mathcal{O}} \sigma_{\mathcal{O}},
    \end{equation*}
    where we have the following.
    \begin{itemize}
        \item The sum is over the $W_{P}$-orbits $\mathcal{O}\subset\mathscr{Z}$.
        \item Each $\sigma_{\mathcal{O}}$ is an irreducible $\overline{P}/P^{+}$-module.
        \item As a $\overline{T}_{\mathfrak{o}}$-module, $\sigma_{\mathcal{O}}^{I^{+}}\simeq \oplus_{\chi^{\prime}\in \mathcal{O}}\chi^{\prime}$.
    \end{itemize}

    In particular, we note that $\Theta^{P^{+}}$ is multiplicity free as a $\overline{P}/P^{+}$-module.
\end{proposition}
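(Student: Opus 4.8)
The plan is to combine Proposition~\ref{ThetaBasics}, parts \eqref{TProp2} and \eqref{TProp5}, with Lemma~\ref{chiHPorbit} and the standard theory of Bushnell--Kutzko type decompositions. First I would observe that by \eqref{TProp2} the space $\Theta^{P^{+}}$ equals $\Theta^{I^{+}}$ as a vector space (since $P^{+}\subset I^{+}$ is wrong --- rather $I^{+}\subset P^{+}$, so $\Theta^{P^{+}}\subset\Theta^{I^{+}}$; one must be careful here, as $\Theta^{P^{+}}$ is a proper subspace in general), and it carries a module structure over $\mathcal{H}_{P}=C^{\infty}_{c,\varepsilon}(I^{+}\backslash\overline{P}/I^{+})$. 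The key structural input is that, via the Morita-type equivalence between $\overline{P}/P^{+}$-modules generated by their $\overline{I}/I^{+}$-isotypic vectors and $\mathcal{H}_{P}$-modules, the $\overline{P}/P^{+}$-module $\Theta^{P^{+}}$ corresponds to the $\mathcal{H}_{P}$-module $(\Theta^{P^{+}})^{I^{+}}$-with-its-$e_{\chi}$-decomposition; concretely $\Theta^{P^{+}}$ is the $\overline{P}$-module generated by $\Theta^{I^{+}}$, and every irreducible constituent has nonzero $I^{+}$-fixed vectors.

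The main steps, in order, would be: (1) Decompose $\Theta^{I^{+}}=\bigoplus_{\chi\in\mathscr{Z}}\Theta^{(\overline{I},\chi)}$ as an $\overline{T}_{\mathfrak{o}}$-module using \eqref{TProp5}, with each summand one-dimensional by \eqref{TProp3}. (2) For a fixed $\chi\in\mathscr{Z}$, consider the $\overline{P}$-submodule $M_{\chi}\subset\Theta^{P^{+}}$ generated by $\Theta^{(\overline{I},\chi)}$. Using Lemma~\ref{chiHPorbit}, the space $\mathcal{H}_{P}*e_{\chi}$ has basis $\{e_{{}^{w}\chi}*T_{\dot{w}}*e_{\chi}: w\in W_{P}\}$, and applying these operators to the line $\Theta^{(\overline{I},\chi)}$ shows that $M_{\chi}^{I^{+}}$ has $\overline{T}_{\mathfrak{o}}$-decomposition contained in $\bigoplus_{\chi'\in W_{P}\cdot\chi}\Theta^{(\overline{I},\chi')}$, and one checks it is all of it --- so $M_{\chi}^{I^{+}}=\bigoplus_{\chi'\in\mathcal{O}}\Theta^{(\overline{I},\chi')}$ where $\mathcal{O}=W_{P}\cdot\chi$. (3) Since distinct $W_{P}$-orbits give disjoint sets of $\overline{T}_{\mathfrak{o}}$-characters, the sum $\sum_{\mathcal{O}}M_{\chi_{\mathcal{O}}}$ is direct and exhausts $\Theta^{P^{+}}$ (because its $I^{+}$-invariants exhaust $\Theta^{I^{+}}=(\Theta^{P^{+}})^{I^{+}}$, and a $\overline{P}$-module with depth zero is determined by its $I^{+}$-invariants as an $\mathcal{H}_{P}$-module). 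Set $\sigma_{\mathcal{O}}:=M_{\chi_{\mathcal{O}}}$. (4) Prove each $\sigma_{\mathcal{O}}$ is irreducible: any nonzero $\overline{P}$-submodule has nonzero $I^{+}$-invariants (depth zero), hence contains some $\Theta^{(\overline{I},\chi')}$ with $\chi'\in\mathcal{O}$, and then the $\mathcal{H}_{P}$-action (again via Lemma~\ref{chiHPorbit}) sweeps out all of $\sigma_{\mathcal{O}}^{I^{+}}$, forcing the submodule to be everything. (5) Multiplicity-freeness is then immediate: the $\overline{T}_{\mathfrak{o}}$-characters appearing in $\sigma_{\mathcal{O}}^{I^{+}}$ pin down $\mathcal{O}$, so no two summands are isomorphic.

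The main obstacle I anticipate is step (2)/(4): making rigorous the passage between the $\overline{P}/P^{+}$-module $\Theta^{P^{+}}$ and its $\mathcal{H}_{P}$-module of $I^{+}$-invariants, i.e. the precise form of the category equivalence (an analogue of Borel--Casselman / Bushnell--Kutzko in the covering, parahoric-level setting) that guarantees (a) a depth-zero $\overline{P}$-module is generated by its $I^{+}$-fixed vectors and is irreducible iff its $\mathcal{H}_{P}$-module of $I^{+}$-invariants is, and (b) $\dim\sigma_{\mathcal{O}}^{I^{+}}=|\mathcal{O}|$ rather than something larger (this uses that each $\Theta^{(\overline{I},\chi')}$ is one-dimensional, from \eqref{TProp3}). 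I would either cite the relevant statement from \cite{GGK} or \cite{BK98} directly, or reduce to it by noting $\overline{P}/P^{+}=\overline{L}$ is (a central quotient of) a finite reductive group and $\mathcal{H}_{P}e_{\chi}$ is the corresponding Hecke algebra, for which the decomposition of a principal-series-type representation into $W_{P}$-orbit pieces is classical (Howlett--Lehrer). Everything else --- the computation with $e_{{}^{w}\chi}*T_{\dot{w}}*e_{\chi}$, disjointness of orbits, multiplicity-freeness --- is routine bookkeeping.
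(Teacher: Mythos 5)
Your proposal runs along essentially the same lines as the paper's proof: the inputs are Proposition \ref{ThetaBasics} \eqref{TProp2}--\eqref{TProp5}, the depth-zero fact that every irreducible $\overline{P}/P^{+}$-constituent of $\Theta^{P^{+}}$ contains one of the types $(\overline{I},\chi)$ with $\chi\in\mathscr{Z}$, and Lemma \ref{chiHPorbit}. The only real difference is organizational: the paper argues top-down (take an arbitrary irreducible constituent $\sigma$, use the surjection $\mathcal{H}_{P}e_{\chi}\twoheadrightarrow\sigma^{I^{+}}$ and Lemma \ref{chiHPorbit} to identify $\sigma^{I^{+}}$ with a single $W_{P}$-orbit, then count orbits against the multiplicity-free $\Theta^{I^{+}}$), while you build the summands bottom-up as the $\overline{P}$-submodules generated by the lines $\Theta^{(\overline{I},\chi)}$ and then prove irreducibility and exhaustion. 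Both versions use the same facts in a different order.

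Two concrete points need fixing. First, your containment is backwards: since $I\subset P$ we have $P^{+}\subset I^{+}$, hence $\Theta^{I^{+}}\subseteq\Theta^{P^{+}}$, and $\Theta^{P^{+}}$ is in general strictly \emph{larger} (the $\sigma_{\mathcal{O}}$ are typically not one-dimensional); your later steps implicitly use the correct direction, but as written the opening sentence is false and would make the proposition vacuous. Second, the ingredient you flag as the main obstacle --- that every irreducible constituent of $\Theta^{P^{+}}$ has nonzero $(\overline{I},\chi)$-isotypic vectors with $\chi\in\mathscr{Z}$, equivalently that $\Theta^{P^{+}}$ is generated by $\Theta^{I^{+}}$ --- is not a purely finite-group statement, so the Howlett--Lehrer fallback cannot supply it by itself: a priori $\Theta^{P^{+}}$ could contain constituents lying outside these principal series, and it is exactly depth-zero Moy--Prasad theory for covers that excludes this; the paper cites \cite{HW09} (Propositions 3.5 and 3.6) for precisely this step, not \cite{BK98} or a finite-group argument. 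Relatedly, where you write ``one checks it is all of it'' and ``sweeps out'', you need a reason why $e_{{}^{w}\chi}*T_{\dot{w}}*e_{\chi}$ acts nonzero on $v_{\chi}$ --- for instance invertibility of $T_{\dot{w}}$ in $\mathcal{H}_{P}$ (from the quadratic relations, since $q\neq0$), or the multiplicity-one counting against $\Theta^{I^{+}}$ that the paper uses in its final step. With these repairs your argument closes and matches the paper's.
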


\begin{proof}
    Proposition \ref{ThetaBasics}, \eqref{TProp5} implies the following two properties of $\Theta$. First, $0\neq \Theta^{I^{+}}\subset \Theta^{P^{+}}$. Second, the representations $(\overline{I},\chi)$, where $\chi\in \mathscr{Z}$ are depth $0$ types of $\Theta$.
    
    Let $\sigma$ be an irreducible constituent of $\Theta^{P^{+}}$. By depth $0$ Moy-Prasad theory for covering groups \cite[Propositions 3.5,3.6]{HW09}, there exists $\chi\in\mathscr{Z}$ such that $\sigma^{(\overline{I},\chi)}$ is a one dimensional space. Let $v_{\chi}\in \sigma^{(\overline{I},\chi)}$ be nonzero.
    
    Let $\mathcal{O}\subset \mathscr{Z}$ be the $W_{P}$-orbit of $\chi$. Now since $\sigma^{I^{+}}$ is an irreducible $\mathcal{H}_{P}$-module we have a surjective map of $\mathcal{H}_{P}$-modules $\mathcal{H}_{P}e_{\chi}\twoheadrightarrow \sigma^{I^{+}}$, defined by $he_{\chi}\mapsto he_{\chi}\cdot v_{\chi}$. By Lemma \ref{chiHPorbit}, $\sigma^{I^{+}}\simeq \oplus_{\chi^{\prime}\in\mathcal{O}}\chi^{\prime}$ as $\overline{T}_{\mathfrak{o}}$-modules .
    
    Finally every $W_{P}$-orbit $\mathcal{O}\subseteq \mathscr{Z}$ must appear exactly once by Proposition \ref{ThetaBasics}, (\ref{TProp5}) and the inclusion $\Theta^{I^{+}}\subseteq \Theta^{P^{+}}$. Thus the result follows.
\end{proof}

%%%%%%%%%%%%%%%%%%%%%%%%%%%%%%%%%%%%%%%%%%%%%%%
%%%%%%%%%%%%%%%%%%%%%%%%%%%%%%%%%%%%%%%%%%%%%%%
%Kawanaka Wave Front Set
%%%%%%%%%%%%%%%%%%%%%%%%%%%%%%%%%%%%%%%%%%%%%%%
%%%%%%%%%%%%%%%%%%%%%%%%%%%%%%%%%%%%%%%%%%%%%%%

\subsection{The Kawanaka wave front set of semisimple principal series representations}\label{KWFSGen}
In this subsection only we abandon out previous notation. Let $\mathbb G$ denote a connected reductive group defined over $\mathbb F_q$ and write $G=\mathbb G(\mathbb F_q)$ for its $\mathbb F_q$-points.
\begin{hypothesis}
    \label{hyp:wf}
    Suppose the characteristic of $\mathbb F_q$ is good for $\mathbb G$ (c.f. \cite[\S2.1]{premet} for the definition of a good prime).
\end{hypothesis}

In this section we will determine an explicit formula for the Kawanaka wave front set (see definition \ref{def:kwf}) for semisimple principal series representations.
We refer to \cite[Definition 2.6.9]{Geck_Malle_2020} for the definition of a semisimple character.

\begin{proposition}\cite[Theorem 3.1.28]{Geck_Malle_2020}
    Let $B$ be a Borel subgroup of $G$ and $T$ be a maximal torus in $B$ with Weyl group $W$.
    Let $\chi$ be a character of $T$ and $\mathrm{Stab}_W(\chi)$ denote the stabiliser of $\chi$ in $W$.
    There is a natural identification
    \begin{equation}
        \label{eq:hecke}
        \End(\mathrm{Ind}_{B}^{G}(\chi))\cong C((B,\chi)\backslash G / (B,\chi)) =: \mathcal H
    \end{equation}
    where the latter is a convolution algebra after fixing a choice of Haar measure.
    \begin{enumerate}
        \item Any irreducible constituent $\sigma$ of $\mathrm{Ind}_B^G(\chi)$ gives rise to an $\mathcal H$-module $\sigma^{(B,\chi)}$ via
        \begin{equation}
            \label{eq:h-module}
            f\star v = \int_Gf(g)\sigma(g)vdg, \quad f\in \mathcal H,v\in \sigma^{(B,\chi)}.
        \end{equation}
        \item Any function in $\mathcal H$ is supported on $B\mathrm{Stab}_W(\chi)B$ and the algebra decomposes as a direct sum of vector spaces
        $$\bigoplus_{w\in \mathrm{Stab}_W(\chi)} \mathcal H_w$$
        where $\mathcal H_w$ consists of functions in $\mathcal H$ supported on $BwB$.
        \item Let $\Phi_\chi$ denote the subroot system of $\Phi$ consisting of roots $\alpha$ such that $\chi\circ \alpha^\vee:\mathbb F_q^\times\to \mathbb F_q^\times$ is trivial and let $W_\chi$ denote the associated Weyl group.
        Let 
        $$\mathscr H = \bigoplus_{w\in W_\chi}\mathcal H_w.$$
        Then $\mathscr H$ is a subalgebra and admits a unique basis $\{T_w\in \mathcal H_w:w\in W_\chi\}$ with respect to which it admits the presentation of a Hecke algebra $\mathcal H(W_\chi,q)$.
    \end{enumerate}
\end{proposition}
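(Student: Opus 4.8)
The final statement is the cited Proposition \cite[Theorem 3.1.28]{Geck\_Malle\_2020} about the Hecke algebra structure on $\End(\mathrm{Ind}_B^G(\chi))$. Since it is quoted verbatim from Geck–Malle, my proof proposal is to reconstruct their argument, which is a finite-group-of-Lie-type analogue of the Iwahori–Matsumoto and Howlett–Lehrer theory.

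\textbf{Proof proposal.}

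The plan is to build the algebra isomorphism in \eqref{eq:hecke} by hand from the double-coset description, then analyze the support and the quadratic relations coset by coset. First I would set up \eqref{eq:hecke}: by Frobenius reciprocity and Mackey theory, $\End_G(\mathrm{Ind}_B^G(\chi))$ has a basis indexed by those double cosets $BwB$ ($w\in W$, via the Bruhat decomposition $G = \bigsqcup_{w\in W} BwB$) for which the character $\chi$ and its $w$-conjugate $^w\chi$ agree on $B\cap {}^wB$; since $T$ is a torus and $\chi$ is a character of $T$, this condition is exactly $w\in \mathrm{Stab}_W(\chi)$. Transporting endomorphisms to functions on $G$ gives the convolution algebra $\mathcal H = C((B,\chi)\backslash G/(B,\chi))$ with basis $\{T_w : w\in \mathrm{Stab}_W(\chi)\}$ where $T_w$ is supported on $BwB$; this proves (2) and the support statement. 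The module structure \eqref{eq:h-module} in (1) is then formal: $\sigma^{(B,\chi)}$ is the $(B,\chi)$-isotypic subspace, and convolution by $f\in\mathcal H$ preserves it because $f$ is $(B,\chi)$-bi-equivariant, so any constituent $\sigma$ of $\mathrm{Ind}_B^G(\chi)$ contributes the $\mathcal H$-module $\sigma^{(B,\chi)}$ (nonzero exactly when $\sigma$ is a genuine constituent, by Frobenius reciprocity).

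The substantive part is (3): identifying the subspace $\mathscr H = \bigoplus_{w\in W_\chi}\mathcal H_w$ as a subalgebra with a normalized basis satisfying the Hecke-algebra presentation of $\mathcal H(W_\chi,q)$. I would proceed as follows. First, show $W_\chi \le \mathrm{Stab}_W(\chi)$ is a reflection (indeed Coxeter) subgroup — this is standard: $W_\chi$ is the Weyl group of the subroot system $\Phi_\chi = \{\alpha\in\Phi : \chi\circ\alpha^\vee = 1\}$, with its own positive system inherited from $\Phi^+$ and simple reflections $S_\chi$. Second, establish the key multiplicativity relation $T_s T_w = T_{sw}$ whenever $s\in S_\chi$, $w\in W_\chi$ and $\ell(sw) = \ell(w)+1$; this is a Bruhat-order computation using $BsB\cdot BwB = BswB$ for such $s,w$, exactly as in the Iwahori–Hecke case, and it requires checking that the $\chi$-twists line up, which is where membership in $W_\chi$ (not merely $\mathrm{Stab}_W(\chi)$) is used — the character $\chi$ is trivial on the relevant one-parameter subgroups attached to roots in $\Phi_\chi$. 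Third, for $s\in S_\chi$ compute the quadratic relation $T_s^2 = (q-1)T_s + q\,T_1$: this is the rank-one computation inside the minimal parabolic attached to $s$, counting the $\mathbb F_q$-points of the relevant affine line, and again triviality of $\chi$ on the root subgroup for $s$ is what makes the coefficients come out to the split values $(q-1)$ and $q$ rather than twisted Gauss-sum-like quantities. Together these show $\mathscr H$ is closed under multiplication and that $w\mapsto T_w$ furnishes generators-and-relations matching $\mathcal H(W_\chi,q)$; uniqueness of the basis follows since the $T_w$ are forced by their support and normalization.

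The main obstacle I anticipate is the bookkeeping in the second and third steps — verifying that the $\chi$-equivariance twists are compatible along products of double cosets and that the structure constants for $s\in S_\chi$ are genuinely untwisted. Concretely, for a general $w\in\mathrm{Stab}_W(\chi)$ the product $T_sT_w$ can pick up a nontrivial scalar from $\chi$ evaluated on a root subgroup, and the content of restricting to $W_\chi$ is precisely that these scalars vanish; one must track the reduced-word combinatorics (via the subword property in $W_\chi$ with its length function $\ell_\chi$, which may differ from the restriction of $\ell$) to see this uniformly. Everything else — the Mackey/Frobenius setup, the Bruhat decomposition, the closure under convolution — is routine once this compatibility is in place, and I would cite \cite{Geck_Malle_2020} for the detailed verification rather than reproduce it.
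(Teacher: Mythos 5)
Your outline is correct: it is the standard Mackey/Bruhat-decomposition argument (basis $T_w$ indexed by $\mathrm{Stab}_W(\chi)$, length-additive multiplicativity, and the rank-one computation for $s\in S_\chi$, where the relevant triviality is that of $\chi\circ\alpha^\vee$ on the coroot image in $T$ — not of $\chi$ on the root subgroup, which is automatic — giving the untwisted coefficients $q-1$ and $q$), i.e.\ precisely the Howlett--Lehrer-style proof underlying the cited result. The paper gives no proof of this proposition at all, simply invoking \cite[Theorem 3.1.28]{Geck_Malle_2020}, so your sketch plus deferral to the same reference for the detailed bookkeeping matches the paper's treatment.
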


\begin{proposition}\label{prop:semisimple}
    An irreducible subrepresentation $\sigma$ of $\mathrm{Ind}_B^G(\chi)$ is a semisimple character if the restriction of $\sigma^{(B,\chi)}$ to $\mathscr H$ is the index character of $\mathscr H$.
\end{proposition}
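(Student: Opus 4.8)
The plan is to identify semisimple characters via Deligne--Lusztig theory and then match that characterization with the Hecke-algebra condition. First I would recall the standard fact (e.g.\ \cite[\S2.6, \S3.1]{Geck_Malle_2020}) that the semisimple characters in the Lusztig series $\mathcal E(G,[s])$ attached to a semisimple element $s$ of the dual group $G^*$ are precisely the irreducible constituents of $R_{T_w}^{G}(\hat s)$ that occur, up to sign, with the ``generic degree'' behaviour characterizing the trivial character of the relative Weyl group $W_\chi \cong C_{G^*}(s)^F/\ldots$; equivalently, under the Howlett--Lehrer/Lusztig parametrization of the constituents of $\mathrm{Ind}_B^G(\chi)$ by $\mathrm{Irr}(\mathcal H)$, the semisimple character is the one whose $\mathcal H$-module, restricted to the sub-Hecke-algebra $\mathscr H \cong \mathcal H(W_\chi,q)$, is the index (i.e.\ ``trivial'', $T_w \mapsto q^{\ell(w)}$) character. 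This is essentially a restatement of the known description of semisimple characters, so the bulk of the work is in making the dictionary precise.

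The key steps, in order, would be: (1) Fix the semisimple element $s\in G^*$ corresponding to $\chi$ under $T \cong (T^*)^F$ and $\widehat{(T^*)^F}\cong \mathrm{Irr}(T)$, so that $\mathrm{Ind}_B^G(\chi)$ is (a sum over constituents of) $R_{T}^G(\hat s)$ and $\Phi_\chi$ is the root system of $C_{G^*}(s)^\circ$. (2) Invoke the Howlett--Lehrer theory of the endomorphism algebra $\mathcal H = \End(\mathrm{Ind}_B^G(\chi))$ together with Lusztig's result that the relevant parameters are all equal to $q$ on the ``Weyl part'' $\mathscr H$, so that $\mathrm{Irr}(\mathcal H)$ restricted to $\mathscr H$ is governed by $\mathrm{Irr}(W_\chi)$. (3) Recall the explicit formula for $R_T^G(\hat s)$ expanded in the constituents, matching each constituent $\sigma$ with its $\mathscr H$-type; the semisimple character is singled out because its multiplicity/sign pattern in the various $R_{T_w}^G(\hat s)$ is exactly that of the trivial character of $W_\chi$, which corresponds to the index character $T_w\mapsto q^{\ell(w)}$ of $\mathscr H$. (4) Conclude the ``if'' direction as stated: if $\sigma^{(B,\chi)}|_{\mathscr H}$ is the index character, then $\sigma$ matches the trivial character of $W_\chi$ and hence is the (unique) semisimple character in $\mathcal E(G,[s])$.

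I expect the main obstacle to be step (3): cleanly extracting, from the literature, the statement that ``semisimple character $\leftrightarrow$ trivial character of the relative Weyl group $\leftrightarrow$ index character of $\mathscr H$'' in a form that applies at the level of the full (possibly non-abelian endomorphism) algebra $\mathcal H$ and not just its Hecke subalgebra $\mathscr H$ --- the component group $C_{G^*}(s)^F/C_{G^*}(s)^{\circ F}$ can make $\mathcal H$ strictly larger than $\mathscr H$, and one must be careful that restricting to $\mathscr H$ still isolates a unique constituent. This should be handled by citing the Geck--Malle account (semisimple characters have multiplicity one in $R_T^G(\hat s)$ and are characterized by their degree $|G^*|_{p'}/|C_{G^*}(s)^F|_{p'}$), and noting that the index character of $\mathscr H$ forces the corresponding $W_\chi$-type to be trivial, which pins down $\sigma$ uniquely. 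The remaining directions and bookkeeping (compatibility of the identifications \eqref{eq:hecke}--\eqref{eq:h-module} with the Deligne--Lusztig normalization) are routine.
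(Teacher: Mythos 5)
Your route works, at best, only when $\mathcal H=\mathscr H$, i.e.\ when $\mathrm{Stab}_W(\chi)=W_\chi$ (e.g.\ connected center); even there it differs from the paper, which argues via Alvis--Curtis duality (by \cite[Theorem 7.5]{HL83} the index character is carried to the sign character of $\mathscr H$, the sign constituent is regular by the argument at the end of \cite[\S 7.2]{reeder}, and semisimple $=$ dual of regular, \cite[Proposition 8.3.7]{Carter}). The genuine gap is in how you treat $\mathscr H\subsetneq\mathcal H$: your assertion that the index character of $\mathscr H$ ``pins down $\sigma$ uniquely'' is false exactly when the component group $\Omega_\chi:=\mathrm{Stab}_W(\chi)/W_\chi$ is nontrivial. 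In that case several constituents of $\mathrm{Ind}_B^G(\chi)$ (those whose $\mathcal H$-modules have trivial $W_\chi$-part but differ by irreducible representations of $\Omega_\chi$) restrict on $\mathscr H$ to multiples of the index character, and the corresponding Lusztig series also contains several semisimple characters, so there is no ``the'' semisimple character to compare with. Nor does the degree criterion repair this as stated: the degree of $\sigma$ is governed by its full $\mathcal H$-module (the $\Omega_\chi$-part enters the Howlett--Lehrer degree formula), so knowledge of the restriction to $\mathscr H$ alone does not yield the degree $|G^*|_{p'}/|C_{G^*}(s)^F|_{p'}$. What your step (3) really requires is the compatibility of the Howlett--Lehrer parametrization with Jordan decomposition for disconnected centralizers, and this is precisely the nontrivial content you defer to ``citing the literature'' without a concrete statement that applies to $\mathcal H$ rather than just $\mathscr H$.

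The paper does its real work at exactly this point, by a mechanism your proposal has no substitute for: it fixes a regular embedding $G\hookrightarrow\tilde G$ with $\tilde G$ of connected center, shows that restriction of functions identifies the endomorphism algebra of $\mathrm{Ind}_{\tilde B}^{\tilde G}(\tilde\chi)$ with the subalgebra $\mathscr H\subset\mathcal H$, and proves a Clifford-theoretic claim that for every irreducible constituent $\sigma$ of $\tilde\sigma|_G$ there is a nonzero $\mathscr H$-equivariant map $\tilde\sigma^{(\tilde B,\tilde\chi)}\to\sigma^{(B,\chi)}$ (the key point being that $\tilde T$ permutes the constituents transitively and the projections are all nonzero on a $(\tilde B,\tilde\chi)$-eigenvector). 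Hence the index hypothesis on $\sigma$ forces $\tilde\sigma^{(\tilde B,\tilde\chi)}$ to be the index character, the connected-center case makes $\tilde\sigma$ semisimple, and then $\sigma$ is semisimple by \cite[Corollary 2.6.18]{Geck_Malle_2020}. Without this reduction (or a genuinely worked-out Jordan-decomposition compatibility for the full algebra $\mathcal H$), your argument proves the proposition only in the connected-center case.
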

\begin{proof}
    Suppose first that $G$ has connected center. 
    Then $\mathscr H = \mathcal H$ and there is a unique semisimple character in the Lusztig series containing $\mathrm{Ind}_B^G(\chi)$. 
    By \cite[Proposition 8.3.7]{Carter}, $\sigma$ is semisimple if and only if $D_G(\sigma)$ is regular, where $D_G$ denotes Alvis-Curtis duality.
    But by \cite[Theorem 7.5]{HL83}, if $\sigma^{(B,\chi)}$ corresponds to the index character then $D_G(\sigma)^{(B,\chi)}$ corresponds to the sign character of $\mathscr H$. 
    By the argument at the end of \cite[\S7.2]{reeder}, it follows that $D_G(\sigma)$ must be regular and so we can indeed conclude that $\sigma$ is semisimple.

    Let us now drop the assumption that $G$ has connected center.
    Let $\phi:G\to \tilde G$ be a regular embedding \cite[\S1.7]{Geck_Malle_2020} and view $G$ as a subgroup of $\tilde G$ by identifying it with its image.
    In particular $\tilde G$ has connected center.
    Let $\tilde B$ be a Borel of $\tilde G$, and $\tilde T$ be a maximal torus of $\tilde G$ in $\tilde B$, all chosen so that $B\subset \tilde B,T\subset \tilde T$.
    Let $\tilde\chi$ be a character of $\tilde T$ such that $\chi = \tilde\chi|_G$.
    Let $\mu_G$ denote the Haar measure on $G$ and normalise the Haar measure $\mu_{\tilde G}$ on $\tilde G$ so that
    $$|\tilde G:G|\mu_{\tilde G}|_G = \mu_G.$$
    With this normalisation of Haar measures the map
    $$C((\tilde B,\tilde \chi)\backslash \tilde G/(\tilde B,\tilde \chi)) \to \mathcal H, \quad f\mapsto f|_G$$
    is an isomorphism onto $\mathscr H$.
    We will prove the following claim: if $\tilde \sigma$ is an irreducible constituent of $\mathrm{Ind}_{\tilde B}^{\tilde G}(\tilde\chi)$, and $\sigma$ is an irreducible constituent of $\tilde\sigma|_G$ then
    $$\Hom_{\mathscr H}(\tilde\sigma^{(\tilde B,\tilde\chi)},\sigma^{(B,\chi)})\ne0$$
    where the $\mathscr H$-module structure of $\tilde\sigma^{(\tilde B,\tilde\chi)}$ comes from $\mathscr H\simeq C((\tilde B,\tilde \chi)\backslash \tilde G/(\tilde B,\tilde \chi))$ and the $\mathscr H$-module structure on $\sigma^{(B,\chi)}$ is given by the restriction of the $\mathcal H$ action to $\mathscr H$.
    Once we have established this claim we are done because every $\chi$-principal series representation $\sigma$ lies in the restriction of a $\tilde\chi$-principal series representation $\tilde\sigma$.
    Thus if $\sigma^{(B,\chi)}$ restricted to $\mathscr H$ is the index character of $\mathscr H$, then by the claim, $\tilde\sigma^{(\tilde B,\tilde\chi)}$ must also be the index character of $\mathscr H$.
    Since $\tilde G$ has connected center, $\tilde\sigma$ is semisimple, and so by \cite[Corollary 2.6.18]{Geck_Malle_2020}, $\sigma$ is semisimple.

    It remains to verify the claim.
    Let $\tilde \sigma$ be a $\tilde\chi$-pricipal series representation of $\tilde G$ and let
    $$\tilde\sigma|_G = \sigma_1\oplus\cdots \oplus \sigma_k.$$
    By \cite[Theorem 1.7.15]{Geck_Malle_2020}, this decomposition is multiplicity free. 
    By Cliffords theorem, the $\sigma_{j}$ are all conjugate under the action of $\tilde G$ and hence are all $\chi$-principal series representations.
    Since $\tilde G=G\tilde T$, we can moreover deduce that the conjugation action on the $\sigma_i$ by $\tilde T$ is transitive.
    
    Let $\star_{\tilde G}$ denote the action of $\mathscr H$ on $\tilde \sigma^{(\tilde B,\tilde\chi)}$ coming from $\mathscr H\simeq C((\tilde B,\tilde \chi)\backslash \tilde G/(\tilde B,\tilde \chi))$, and let $\star_G$ denote the action of $\mathcal H$ on $\tilde\sigma^{(B,\chi)}$.
    Since every coset $\tilde G/G$ has a representative in $\tilde T$, we have that for $f\in C((\tilde B,\tilde \chi)\backslash \tilde G/(\tilde B,\tilde \chi))$ and $v\in \tilde\sigma^{(\tilde B,\tilde\chi)}$
    \begin{align*}
        f\star_{\tilde G} v &= \int_{\tilde G}f(\tilde g)\tilde\sigma(g)v d\mu_{\tilde G}(\tilde g) = \sum_{g_0\in \tilde G/G}\int_{G}f(gg_0)\tilde\sigma(gg_0)vd\mu_{\tilde G}(g) \\
        &= \sum_{g_0\in \tilde G/G}\int_{G}f(g)\tilde\chi(g_0)^{-1}\tilde\sigma(g)\tilde\chi(g_0)vd\mu_{\tilde G}(g) = |\tilde G:G|\int_{G}f(g)\tilde\sigma(g)vd\mu_{\tilde G}(g) \\
        &= \int_{G}f(g) \tilde\sigma(g)d\mu_G(g) = f\star_G v.
    \end{align*}
    Thus the inclusion map $\iota:\tilde\sigma^{(\tilde B,\tilde\chi)}\to \tilde\sigma^{(B,\chi)}$ is $\mathscr H$-equivariant.
    
    Let $\pi_i:\tilde\sigma\to \sigma_i$ denote the $G$-equivariant projection onto the $\sigma_i$ factor.
    Since it is $G$-equivariant it induces a $\mathscr H$-equivariant map $\pi_i:\tilde\sigma^{(B,\chi)}\to \sigma_i^{(B,\chi)}$.
    Thus 
    $$\pi_i\circ \iota\in \Hom_{\mathscr H}(\tilde\sigma^{(\tilde B,\tilde\chi)},\sigma_i^{(B,\chi)}).$$
    It remains to show that $\pi_i\circ \iota\ne 0$ for all $1\le i\le k$.
    
    For $\tilde t\in \tilde T$ let $m_{\tilde t}:\tilde\sigma\to \tilde\sigma$ denote the multiplication by $\tilde t$ map, and let $\tau_{\tilde t}\in \mathrm{Sym}(\{1,\dots,k\})$ denote the permutation satisfying 
    $$\sigma_{\tau_{\tilde t}(i)}(g)\simeq \hphantom{ }^{\tilde t}\sigma_i(g):=\sigma_i(\tilde t^{-1}g\tilde t).$$
    The map $m_{\tilde t}$ induces a $\tilde G$-equivariant isomorphism $m_{\tilde t}:\hphantom{ }^{\tilde t}\tilde \sigma\to \tilde \sigma$ and, since $\tilde\sigma|_G$ is multiplicity free, restricts to $G$-equivariant isomorphisms $m_{\tilde t,i}:\sigma_i\to \sigma_{\tau_{\tilde t}(i)}$ so that 
    $$m_{\tilde t} = \bigoplus_im_{\tilde t,i}.$$
    Since the space of $G$-homomorphisms from $\tilde \sigma\to \sigma_i$ is one dimensional, there is a constant $c(\tilde t,i)\in \mathbb C^\times$ such that
    $$\pi_{\tau_{\tilde t}(i)}\circ m_{\tilde t,i} = c(\tilde t,i)\pi_{i}.$$
    Now, let $v\in \tilde\sigma^{(\tilde B,\tilde\chi)}$ be non-zero and write $v=v_1+\cdots+v_k$ where $v_i\in \sigma_i$.
    Since $v\ne0$ there is an $i$ such that $\pi_i(v)=v_i\ne0$.
    But since $v\in\tilde\sigma^{(\tilde B,\tilde \chi)}$, we have $\tilde t.v = \tilde\chi(\tilde t).v$ and so
    $$\tilde\chi(\tilde t)v_{\tau_{\tilde t}(i)} = \tilde\chi(\tilde t)\pi_{\tau_{\tilde t}(i)}(v) = \pi_{\tau_{\tilde t}(i)}(\tilde t v) = c(\tilde t,i)v_{i}.$$
    Since $\tilde T$ acts transitively on the $\sigma_i$ we can deduce that $v_j\ne0$ for all $j\ne0$ and so $\pi_i\circ \iota$ is injective for all $i$, and in particular it is non-zero.
\end{proof}

\begin{definition}
    \cite[\S11.2, j-induction]{Carter} 
    \begin{enumerate}
        \item Let $V$ be a Euclidean space and $W\subset GL(V)$ be a finite subgroup generated by reflections.
        Let $U$ denote the orthogonal complement of $V^{W}$ in $V$.
        We say an irreducible representation $E$ of $W$ has property $(P)$ if there exists an $i$ such that $E$ occurs with multiplicity $1$ in $S^i(U^*)$ and multiplicity $0$ in $S^j(U^*)$ for all $0\le j\le i$.
        We call $i$ the fake degree of $E$.
        \item Let $W'\subset W$ be a finite subgroup generated by reflections.
        Let $U'$ denote the orthogonal complement of $V^{W'}$ in $V$.
        Let $E$ be an irreducible representation of $W'$ with property $(P)$ and fake degree $i$.
        Viewing $S^i(U'^*)$ as a subspace of $S^i(U^*)$ we write 
        $$j_{W'}^W(E)$$
        for the $W$-submodule of $S^i(U^*)$ generated by $E$. 
        It is irreducible and also satisfies property $(P)$ with fake degree $i$.
    \end{enumerate}
\end{definition}

\begin{lemma}\cite{macdonald}
    Let $V$ be a Euclidean space and $W$ be the Weyl group of a root system $\Phi\subset V$. 
    The sign representation $w\mapsto \det(w)$ of $W$ satisfies property $(P)$ and has fake degree $|\Phi|/2$.
\end{lemma}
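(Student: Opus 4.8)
The plan is to produce an explicit anti-invariant polynomial of degree $|\Phi|/2$ and then to show that every anti-invariant polynomial is a scalar multiple of it; property $(P)$ with the asserted fake degree follows at once. As a preliminary reduction, observe that the common fixed space $V^{W}$ is the intersection of all the reflecting hyperplanes $H_{\alpha}$, so its orthogonal complement $U$ is exactly $\operatorname{span}_{\mathbb R}(\Phi)$, and restricting the $W$-action to $U$ identifies $W$ with the Weyl group of the root system $\Phi\subset U$ without changing the determinant character. I will identify $S(U^{*})$ with the algebra of polynomial functions on $U$, and for each root $\alpha$ write $\ell_{\alpha}\in U^{*}$ for the functional $v\mapsto\langle\alpha,v\rangle$; since $W$ acts orthogonally, the contragredient action satisfies $w\cdot\ell_{\alpha}=\ell_{w\alpha}$.

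First I would set $N=|\Phi^{+}|=|\Phi|/2$ and $\pi:=\prod_{\alpha\in\Phi^{+}}\ell_{\alpha}\in S^{N}(U^{*})$. Each $w\in W$ permutes $\Phi$, hence permutes $\Phi^{+}$ up to signs, so $w\cdot\pi=\prod_{\alpha\in\Phi^{+}}\ell_{w\alpha}=\varepsilon(w)\,\pi$ for some $\varepsilon(w)\in\{\pm1\}$, and $w\mapsto\varepsilon(w)$ is a homomorphism $W\to\{\pm1\}$. It sends every reflection $s_{\alpha}$ to $\det(s_{\alpha})=-1$; since $W$ is generated by reflections and $\det$ also sends them to $-1$, we get $\varepsilon=\det$. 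Thus $\mathbb C\pi\subset S^{N}(U^{*})$ is a copy of $\sgn$, so $\sgn$ occurs in $S^{N}(U^{*})$ with multiplicity at least $1$.

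The heart of the argument is the converse bound. Let $j\ge0$ and let $f\in S^{j}(U^{*})$ satisfy $w\cdot f=\det(w)f$ for all $w$. For each positive root $\alpha$ we have $s_{\alpha}\cdot f=-f$, and evaluating at a point $v\in\ker\ell_{\alpha}$ (where $s_{\alpha}v=v$) yields $f(v)=-f(v)$, so $f$ vanishes on $\ker\ell_{\alpha}$ and is therefore divisible by the irreducible linear form $\ell_{\alpha}$ in the unique factorization domain $S(U^{*})$. Distinct positive roots give pairwise non-proportional $\ell_{\alpha}$, so $\pi=\prod_{\alpha}\ell_{\alpha}$ divides $f$; hence $j=\deg f\ge N$, and if $j=N$ then $f\in\mathbb C\pi$. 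Combining with the previous paragraph, $\sgn$ occurs with multiplicity $0$ in $S^{j}(U^{*})$ for $0\le j<N$ and with multiplicity exactly $1$ in $S^{N}(U^{*})$, which is precisely property $(P)$ with fake degree $N=|\Phi|/2$.

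I do not expect a genuine obstacle: the argument is classical, and the only steps needing care are bookkeeping ones — checking $U^{\perp}=V^{W}$, fixing the contragredient action so that $w\cdot\ell_{\alpha}=\ell_{w\alpha}$, and the unique-factorization divisibility — with the one substantive observation being that an anti-invariant polynomial must vanish on every reflecting hyperplane and hence be divisible by the product of all positive roots.
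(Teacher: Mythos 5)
Your proof is correct. The paper offers no argument of its own here — the lemma is simply cited to Macdonald — and your self-contained proof is the classical one: the product $\pi=\prod_{\alpha\in\Phi^{+}}\ell_{\alpha}$ spans a copy of $\sgn$ in degree $N=|\Phi|/2$, while any $\det$-anti-invariant polynomial vanishes on every reflecting hyperplane and is therefore divisible by $\pi$, forcing multiplicity $0$ in degrees below $N$ and exactly $1$ in degree $N$. Two small polish points: the identity $\varepsilon=\det$ is cleanest if you verify $\varepsilon(s_{\alpha})=-1$ only for \emph{simple} reflections (which generate $W$), using the standard fact that $s_{\alpha}$ permutes $\Phi^{+}\setminus\{\alpha\}$ and negates $\alpha$, rather than asserting it for arbitrary reflections; and the pairwise non-proportionality of the $\ell_{\alpha}$ for distinct positive roots uses that $\Phi$ is reduced, which holds in the paper's setting (root systems of reductive groups).
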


\begin{definition}
    [Springer support] Let $l$ be a prime not equal to $p$. 
    
    Let $E$ be an irreducible representation of $W$, the Weyl group of $G$.
    
    The Springer support $\mathcal O_{\mathrm{Spr}}(E)$ of $E$ is the largest nilpotent orbit $\mathcal O$ of $\mathbb G(\overline{\mathbb F_q})$ such that $E$ is a constituent of $H^i(\mathcal B_n,\mathbb Q_l)$ for some $n\in \mathcal O$ and some $i\ge 0$, where $\mathcal B_n$ denotes the variety of Borels containing $n$. 
    
    There is a unique such nilpotent orbit, and it is precisely the nilpotent orbit attached to $E$ via the Springer correspondence.
\end{definition}

\begin{corollary}\label{KWFS}
    Suppose hypothesis \ref{hyp:wf} is in effect.
    
    Let $\sigma$ be an irreducible constituent of the principal series $\mathrm{Ind}_{B}^{G}(\chi)$ such that as an $\mathscr{H}$-module $\sigma^{(B,\chi)}$ is isomorphic to the index character. Then
    $$\WF^s(\sigma) = \mathcal {O}_{\mathrm{Spr}}(j_{W_{\chi}}^{W}(\mathrm{sgn})).$$
\end{corollary}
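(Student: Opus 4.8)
The plan is to combine the identification of $\sigma$ as a semisimple character (Proposition \ref{prop:semisimple}) with Lusztig's description of the Kawanaka wave front set of characters of finite reductive groups \cite{L92}, and then to match the $W$-representation that this description produces with $j_{W_\chi}^W(\mathrm{sgn})$ by a Springer-theoretic computation.

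First I would use Proposition \ref{prop:semisimple} (valid under Hypothesis \ref{hyp:wf}) to conclude that $\sigma$ is a semisimple character, and reduce to the case of connected centre exactly as in the proof of that proposition: a regular embedding $\mathbb G\hookrightarrow\tilde{\mathbb G}$ changes neither $W$ nor the subsystem $\Phi_\chi$ and its Weyl group $W_\chi$, and — since generalised Gelfand--Graev representations of $\tilde G$ restrict to sums of GGGRs of $G$ attached to the same geometric nilpotent orbits — it changes neither side of the asserted equality. So I may assume $\mathbb G$ has connected centre, $\mathscr H=\mathcal H$, and $\sigma$ is the unique semisimple character in its Lusztig series $\mathcal E(G,s)$.

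Next I would transport the problem to the dual group. Under the duality between $(\mathbb G,\mathbb T)$ and $(\mathbb G^*,\mathbb T^*)$, the subsystem $\Phi_\chi=\{\alpha:\chi\circ\alpha^\vee=1\}$ is the root system of the connected centraliser $C:=C_{\mathbb G^*}(s)^\circ$, so $W_\chi\cong W_C$ canonically; and under Jordan decomposition $\sigma$ corresponds to the trivial unipotent character $\mathbf 1_C$ of $C$ (equivalently, $D_{\mathbb G}(\sigma)$ is, up to sign, the regular character of $\mathcal E(G,s)$ and hence a constituent of a Gelfand--Graev representation; cf.\ \cite{Carter}). Then I would invoke \cite{L92}: the Kawanaka wave front set of an irreducible character equals $\mathcal O_{\mathrm{Spr}}$ of an explicit $W$-representation attached to it; this construction is compatible with Jordan decomposition, in that the $W$-representation attached to a character of $\mathcal E(G,s)$ is $j$-induced from $W_C\cong W_\chi$ to $W$ of the $W_C$-representation attached to the corresponding unipotent character of $C$; and the $W_C$-representation attached, in the wave front set normalisation, to $\mathbf 1_C$ is $\mathrm{sgn}_{W_C}$ — the sign representation being the fake-degree-maximal constituent of the coinvariant algebra (the Lemma cited to \cite{macdonald}), which is the normalisation forced by $D_{\mathbb G}(\sigma)$ lying in a Gelfand--Graev representation. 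By transitivity of $j$-induction and the fake-degree facts recalled before the corollary, this $W$-representation is $j_{W_\chi}^W(\mathrm{sgn})$, whence $\WF^s(\sigma)=\mathcal O_{\mathrm{Spr}}(j_{W_\chi}^W(\mathrm{sgn}))$, the geometric (Zariski) normalisation of $\mathcal O_{\mathrm{Spr}}$ matching the stable normalisation in Definition \ref{def:kwf}.

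The main obstacle is this last step: making sure Lusztig's wave front set formula returns \emph{exactly} $j_{W_\chi}^W(\mathrm{sgn})$ and not a sign-twisted or Alvis--Curtis-dualised variant. The two delicate points are (a) the distinction between Lusztig's \emph{unipotent support} and Kawanaka's \emph{wave front set}, which on the level of Springer data is precisely the replacement of $\mathrm{triv}_{W_\chi}$ by $\mathrm{sgn}_{W_\chi}$ (equivalently, Spaltenstein duality between the two orbits), and (b) the compatibility of $j$-induction with Jordan decomposition for the semisimple character. Both can be anchored by checking the extreme cases: $\chi$ trivial, where $W_\chi=W$, $\sigma=\mathbf 1_G$, and both sides are the zero orbit; and $\chi$ in general position, where $W_\chi=1$, $\sigma=\mathrm{Ind}_B^G(\chi)$, and both sides are the regular orbit.
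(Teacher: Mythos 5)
Your outline follows the same strategy as the paper: both first invoke Proposition \ref{prop:semisimple} to see that $\sigma$ is a semisimple character, and then feed this into the Lusztig-type formula for the Kawanaka wave front set. The difference is in how that second step is executed. The paper's proof is essentially a citation: semisimplicity places $\sigma$ in the trivial two-sided cell of $W_\chi$, and then \cite[Theorem 14.10]{TAYLOR_2016} directly returns $\WF^s(\sigma)=\mathcal O_{\mathrm{Spr}}(j_{W_\chi}^W(\mathrm{sgn}))$; Taylor's results already package the regular-embedding reduction, the Jordan-decomposition compatibility, and the $\mathrm{sgn}$ (as opposed to $\mathrm{triv}$) normalisation that you verify by hand via the dual group, the identification $W_\chi\cong W_{C_{\mathbb G^*}(s)^\circ}$, transitivity of $j$-induction, and the extreme-case sanity checks. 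Your route is correct in substance and your worry (a)/(b) about unipotent support versus wave front set is exactly the right thing to be careful about, but there is one point you should repair: citing \cite{L92} alone is not enough under hypothesis \ref{hyp:wf}, since Lusztig's original results on unipotent supports and wave front sets are proved under the assumption that $p$ (or $q$) is large, whereas the corollary is asserted for all good characteristics. This is precisely why the paper routes the argument through \cite{TAYLOR_2016}, which extends Lusztig's formula to good characteristic; with that substitution your argument becomes a more explicit, self-contained version of the paper's proof, at the cost of redoing bookkeeping (restriction of generalised Gelfand--Graev representations along a regular embedding, compatibility of $j$-induction with Jordan decomposition) that the citation already covers.
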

\begin{proof}
    Since hypothesis \ref{hyp:wf} is in effect we may apply the results of \cite{TAYLOR_2016}.
    
    By Lemma \ref{prop:semisimple} we know that $\sigma$ is semisimple.
    In particular it belongs to the trivial two-sided cell of $W_\chi$ (c.f. the last displayed equation of \cite[\S14.5]{TAYLOR_2016}).
    The result then follows from \cite[Theorem 14.10]{TAYLOR_2016}.
\end{proof}

\subsection{The Kawanaka wave front set of $\sigma_{\mathcal O}$}\label{KWFSComp}

We now reinstate all of our previous notation in use before the previous subsection.

In this subsection, we compute the Kawanaka wave front set of the representations $\sigma_{\mathcal{O}}$ appearing in Proposition \ref{ThetaPmod}, by applying Corollary \ref{KWFS}. To begin we must translate from the setting of parahoric subgroups to finite groups of Lie type.

Let $\mathcal{O}\subset \mathscr{Z}$ be a $W_{P}$-orbit. Let $\sigma_{\mathcal{O}}\subset \Theta^{P^{+}}$ be an irreducible representation of $\overline{L}=\overline{P}/P^{+}$ as in Proposition \ref{ThetaPmod} and let $\chi\in\mathcal{O}$. 
Using $\overline{\wp}:\overline{L}^{u}\rightarrow \overline{L}$ we inflate $\sigma_{\mathcal{O}}$ to a representation of $\overline{L}^{u}$, which we still call $\sigma_{\mathcal{O}}$. 
Recall from Subsection \ref{ParaSubs} that $\overline{L}^{u}$ is the $\mathfrak{f}$-points of a connected reductive group $\overline{\mathbf{L}}^{u}$.

Let $\underline{B}$ be the Borel subgroup of $\overline{L}^{u}$ such that $\overline{\wp}(\underline{B})=\overline{I}/P^{+}$. The character $\chi\in\mathcal{O}$ defines a character of $\overline{I}$ which is trivial on $I^{+}\supset P^{+}$, thus defines a character of $\underline{B}$. We also call this character $\chi$.

\begin{lemma}
    The $\overline{L}^{u}$-representation $\sigma_{\mathcal{O}}$ is a constituent of the principal series $\mathrm{Ind}_{\underline{B}}^{\overline{L}^{u}}(\chi)$.
\end{lemma}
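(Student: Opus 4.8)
The plan is to show that $\sigma_{\mathcal O}$, which a priori is only known to be an irreducible $\overline L^u$-module, is in fact a subquotient of $\mathrm{Ind}_{\underline B}^{\overline L^u}(\chi)$ by producing a nonzero $(\underline B, \chi)$-eigenvector in it. By Frobenius reciprocity (for finite groups), $\mathrm{Hom}_{\overline L^u}(\sigma_{\mathcal O}, \mathrm{Ind}_{\underline B}^{\overline L^u}(\chi)) \cong \mathrm{Hom}_{\underline B}(\sigma_{\mathcal O}, \chi)$, and since all representations of finite groups are semisimple, $\sigma_{\mathcal O}$ being a constituent of the induced module is equivalent to $\sigma_{\mathcal O}^{(\underline B, \chi)} \neq 0$. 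So the entire content is to exhibit a nonzero vector in $\sigma_{\mathcal O}$ on which $\underline B$ acts by $\chi$.

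First I would unwind the relationship between the finite-group data and the parahoric data. By Proposition \ref{ThetaPmod}, $\sigma_{\mathcal O}^{I^+} \simeq \oplus_{\chi' \in \mathcal O}\chi'$ as $\overline T_{\mathfrak o}$-modules; in particular, since $\chi \in \mathcal O$, the $\chi$-isotypic subspace $\sigma_{\mathcal O}^{(\overline I, \chi)}$ is nonzero (in fact one-dimensional). Now I would translate this to the group $\overline L^u = \overline{\mathbf L}^u(\mathfrak f)$: the pro-$p$ radical $P^+$ acts trivially on $\sigma_{\mathcal O}^{P^+} \supseteq \sigma_{\mathcal O}^{I^+}$ (indeed $\sigma_{\mathcal O}$ was defined on $\overline L = \overline P/P^+$ and then inflated along $\overline\wp$), and the Borel $\underline B$ of $\overline L^u$ is characterized by $\overline\wp(\underline B) = \overline I/P^+$. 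Therefore the $\overline I$-action on $\sigma_{\mathcal O}^{I^+}$, which factors through $\overline I/P^+$, corresponds under inflation exactly to the $\underline B$-action on $\sigma_{\mathcal O}$. The character $\chi$ of $\overline I/I^+$ was, by construction in the paragraph preceding the lemma, precisely the character of $\underline B$ we are using.

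Putting this together: the space $\sigma_{\mathcal O}^{(\overline I, \chi)}$, viewed inside $\sigma_{\mathcal O}$ as a $\overline L^u$-module, is a nonzero space on which $\underline B$ acts through $\chi$ — that is, $\sigma_{\mathcal O}^{(\underline B, \chi)} \neq 0$. Hence $\mathrm{Hom}_{\underline B}(\sigma_{\mathcal O}, \chi) \neq 0$, and by Frobenius reciprocity $\mathrm{Hom}_{\overline L^u}(\sigma_{\mathcal O}, \mathrm{Ind}_{\underline B}^{\overline L^u}(\chi)) \neq 0$. Since $\sigma_{\mathcal O}$ is irreducible, this Hom-space being nonzero means $\sigma_{\mathcal O}$ embeds in $\mathrm{Ind}_{\underline B}^{\overline L^u}(\chi)$, so it is a constituent, as claimed.

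The argument is essentially a bookkeeping exercise, so I do not expect a serious obstacle; the one point requiring care is the compatibility of the three avatars of $\chi$ — as a character of $\overline T_{\mathfrak o}/T_{\mathfrak o}^+$ (where it lives in $\mathscr Z$), as a character of $\overline I/I^+$ (inflation, trivial on $I^+$), and as a character of $\underline B$ (via $\overline\wp(\underline B) = \overline I/P^+$ and triviality on the unipotent radical) — together with the fact that the $\overline T_{\mathfrak o}$-weight space structure of $\sigma_{\mathcal O}^{I^+}$ in Proposition \ref{ThetaPmod} genuinely detects a $\underline B$-eigenvector and not merely a $\underline T$-eigenvector. This last point is where one uses that $\underline B = \underline T \ltimes \underline U$ with $\underline U = $ image of $I^+/P^+$ acting trivially (as $I^+ \supset P^+$ acts trivially on $\sigma_{\mathcal O}^{P^+} \supseteq \sigma_{\mathcal O}^{I^+}$, while more precisely $I^+/P^+$ is the unipotent radical of $\underline B$), so that a $\underline T$-eigenvector in $\sigma_{\mathcal O}^{I^+}$ is automatically a $\underline B$-eigenvector.
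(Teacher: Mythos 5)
Your argument is correct and is exactly the paper's proof (the paper simply cites Proposition \ref{ThetaPmod} and Frobenius reciprocity), with the useful extra care of checking that the $\overline{T}_{\mathfrak o}$-eigenvector in $\sigma_{\mathcal O}^{I^{+}}$ is genuinely a $(\underline{B},\chi)$-eigenvector because $I^{+}/P^{+}$ acts trivially. No gaps.
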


\begin{proof}
    This follow from Proposition \ref{ThetaPmod} and Frobenius reciprocity.
\end{proof}

Next we need to determine the structure of the one dimensional space $\sigma_{\mathcal{O}}^{(\underline{B},\chi)}=\sigma_{\mathcal{O}}^{(\overline{I},\chi)}$ as a module of a Hecke algebra. 

Let $\mathcal{H}_{P,\chi}:=e_{\chi}\mathcal{H}_{P}e_{\chi}\subset\mathcal{H}_{\chi}$. 
This Hecke algebra acts on $\sigma_{\mathcal{O}}^{\overline{I},\chi}$, and so does the subalgebra $\mathscr{H}_{P,\chi}:=\mathscr{H}_{\chi}\cap \mathcal{H}_{P,\chi}$. 

\begin{proposition}\label{ThetaParaHecke}
    Let $\mathcal{O}\subset \mathscr{Z}$ be a $W_{P}$-orbit and let $\chi\in \mathcal{O}$. 
    
    Then the $\mathscr{H}_{P,\chi}$-module $\sigma_{\mathcal{O}}^{(\overline{I},\chi)}$ is isomorphic to the index character.
\end{proposition}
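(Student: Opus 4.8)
The plan is to bootstrap from the global statement Proposition \ref{ThetaBasics}, \eqref{TProp6}, which says that $\Theta^{(\overline I,\chi)}$ carries the index character of $\mathscr H_\chi$, and to show that the index character of $\mathscr H_\chi$ restricts to the index character of the finite Hecke subalgebra $\mathscr H_{P,\chi}$. First I would set up the comparison of presentations. By Theorem \ref{IMPres}, $\mathscr H_\chi \cong \mathscr H(W_\chi^0,S_\chi)$, the Iwahori--Hecke algebra of the affine (or finite) Coxeter system $(W_\chi^0,S_\chi)$ attached to the simple affine roots $\Delta_\chi$. The point is that $\mathscr H_{P,\chi}$ should be the parabolic (finite-type) Hecke subalgebra corresponding to those $s_a\in S_\chi$ with $a$ a \emph{finite} root, i.e. to $W_{P,\chi}:=W_\chi^0\cap W_P$. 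Concretely, under the isomorphism of Theorem \ref{IMPres} the element $e_\chi T_{\dot w}e_\chi$ (for $w\in W_{P,\chi}$, with $\dot w$ chosen in $\overline P$) corresponds up to the scalar $q^{(\ell_\chi(w)-\ell(w))/2}\widehat\chi(\dot w)$ to $t_w$; since such $w$ lie in $W_P$, the corresponding double cosets lie in $\overline P$, so $e_\chi T_{\dot w}e_\chi\in \mathcal H_{P,\chi}$, and hence these $t_w$ span exactly $\mathscr H_{P,\chi}$. One should check that $\{s_a\in S_\chi : a\text{ finite}\}$ generates $W_{P,\chi}$ as a Coxeter group with the induced length function (this is the standard compatibility of $\Delta_\chi$ with the chamber $\mathscr A_{\chi,0}\subseteq \mathscr A_0$, essentially the argument that the simple reflections of a parabolic subgroup of an affine Weyl group, cut out by dropping the affine node(s), are the finite simple reflections).

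Next I would invoke Lemma \ref{chiHPorbit}, which gives $\dim \mathcal H_P e_\chi = |W_P|$ and, after applying $e_\chi$ on the left, $\dim \mathcal H_{P,\chi}=\dim e_\chi\mathcal H_P e_\chi = |\{w\in W_P : {}^w\chi = \chi\}| = |\mathrm{Stab}_{W_P}(\chi)|$ — more precisely one cuts down by the stabilizer condition as in the proof of Proposition \ref{ThetaPmod}. This pins down the dimension, and in particular confirms that $\mathscr H_{P,\chi}$ is the finite Hecke algebra $\mathscr H(W_{P,\chi},q)$ (with $W_{P,\chi}$ the reflection subgroup of $W_P$ generated by the finite reflections in $W_\chi^0$), i.e. the ``semisimple-type'' Hecke algebra relevant to Proposition \ref{prop:semisimple} and Corollary \ref{KWFS}. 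Then, because $\Theta^{(\overline I,\chi)}\hookrightarrow \Theta^{P^+}$ identifies $\Theta^{(\overline I,\chi)}$ with $\sigma_{\mathcal O}^{(\overline I,\chi)}$ (both are the one-dimensional $\chi$-isotypic line, by Proposition \ref{ThetaBasics} \eqref{TProp3}--\eqref{TProp4} and Proposition \ref{ThetaPmod}), the $\mathcal H_{P,\chi}$-action on $\sigma_{\mathcal O}^{(\overline I,\chi)}$ is just the restriction of the $\mathcal H_\chi$-action on $\Theta^{(\overline I,\chi)}$ to the subalgebra $\mathcal H_{P,\chi}\subseteq \mathcal H_\chi$ (this compatibility of the parahoric Hecke action with the Iwahori one follows from the functoriality of the relevant Hecke-module structures, e.g. the fact that $\Theta^{P^+}$ is built from $\Theta^{I^+}$ exactly as in Proposition \ref{ThetaPmod} and Lemma \ref{chiHPorbit}).

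Finally I would combine the two inputs: by Proposition \ref{ThetaBasics} \eqref{TProp6}, each generator $t_{s_a}$ with $s_a\in S_\chi$ acts on $\Theta^{(\overline I,\chi)}$ by $q$; restricting to those $s_a$ with $a$ finite — which generate $\mathscr H_{P,\chi}$ — they still act by $q$ on $\sigma_{\mathcal O}^{(\overline I,\chi)}=\Theta^{(\overline I,\chi)}$; hence the one-dimensional $\mathscr H_{P,\chi}$-module $\sigma_{\mathcal O}^{(\overline I,\chi)}$ sends every standard generator to $q$, which is by definition the index character. This is exactly the hypothesis needed to feed $\sigma_{\mathcal O}$ into Corollary \ref{KWFS}.

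\textbf{Main obstacle.} The delicate point is the identification $\mathscr H_{P,\chi}\cong \mathscr H(W_{P,\chi},q)$ as a \emph{parabolic} subalgebra of $\mathscr H_\chi$ compatibly with the Iwahori--Matsumoto presentation of Theorem \ref{IMPres} — i.e. verifying that the simple reflections of the Coxeter system $(W_\chi^0,S_\chi)$ which happen to lie in $W_P$ are precisely those indexed by finite roots in $\Delta_\chi$, that they generate $W_{P,\chi}$, and that the length functions and the twisting scalars $q^{(\ell_\chi(w)-\ell(w))/2}\widehat\chi(\dot w)$ behave multiplicatively on $W_{P,\chi}$ so that $w\mapsto q^{(\ell_\chi(w)-\ell(w))/2}\widehat\chi(\dot w)\,e_\chi T_{\dot w}e_\chi$ really is a subalgebra isomorphism onto $\mathscr H_{P,\chi}$. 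Granting that structural statement (which is the covering-group analogue of the standard relationship between parahoric and Iwahori Hecke algebras, as in \cite{Wang24} and the finite-group statement recalled in Proposition \cite[Theorem 3.1.28]{Geck_Malle_2020}), the rest is the short bookkeeping above.
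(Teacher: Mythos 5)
Your strategy is the same as the paper's: identify $\mathscr H_{P,\chi}$ inside $\mathscr H_\chi\cong\mathscr H(W_\chi^0,S_\chi)$ as the span of the $t_w$ with $w\in W_\chi^0\cap W_P$, identify the line $\sigma_{\mathcal O}^{(\overline I,\chi)}$ with $\Theta^{(\overline I,\chi)}$ via Propositions \ref{ThetaBasics} and \ref{ThetaPmod}, and then restrict the index character supplied by Proposition \ref{ThetaBasics}\eqref{TProp6}. However, the step you yourself single out as the main point to verify is stated incorrectly, and as stated it is false for exactly the parahorics the paper needs. Writing $P=P_y$ with $y\in\overline{\mathscr A}_0\subseteq\overline{\mathscr A}_{\chi,0}$, the group $W_P$ is the stabilizer of $y$ in $W_{\mathrm{af}}$, so $W_\chi^0\cap W_P$ is the stabilizer of $y$ in $W_\chi^0$; since $y$ lies in the closure of the $\chi$-alcove, this stabilizer is generated by $S_{\chi,y}=\{s_a:a\in\Delta_\chi,\ a(y)=0\}$, which is the paper's formulation. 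This set is \emph{not} ``the simple reflections indexed by finite roots in $\Delta_\chi$'': when $y$ is a non-hyperspecial vertex (the cases driving the non-hyperspecial analysis in Section \ref{WFSComp}), $S_{\chi,y}$ can contain reflections in genuinely affine roots $\alpha+k$, $k\neq0$, and it omits any finite $\alpha\in\Delta_\chi$ with $\alpha(y)\neq0$. Your characterization is only correct for the hyperspecial parahoric at the origin, i.e.\ the single case obtained by ``dropping the affine node''. (Note also that the inclusion goes $\mathscr A_0\subseteq\mathscr A_{\chi,0}$, not the other way as written in your parenthesis.)

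The good news is that the error does not reach the conclusion: Proposition \ref{ThetaBasics}\eqref{TProp6} shows that \emph{every} generator $t_{s_a}$, $s_a\in S_\chi$, acts by $q$ on $\Theta^{(\overline I,\chi)}=\sigma_{\mathcal O}^{(\overline I,\chi)}$, so once $\mathscr H_{P,\chi}$ is identified with the Hecke algebra of the Coxeter system $(W_\chi^0\cap W_P,\,S_{\chi,y})$ --- which is what your support argument (double cosets of $T_{\dot w}$ lying in $\overline P$ precisely for $w\in W_\chi^0\cap W_P$) actually proves, with $S_{\chi,y}$ in place of the ``finite'' reflections --- the restriction of the index character is again the index character, exactly as in the paper's proof. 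One further small caveat: your dimension count $\dim e_\chi\mathcal H_Pe_\chi=|\mathrm{Stab}_{W_P}(\chi)|$ from Lemma \ref{chiHPorbit} concerns $\mathcal H_{P,\chi}$, not the subalgebra $\mathscr H_{P,\chi}$; since $\mathrm{Stab}_{W_P}(\chi)$ can be strictly larger than $W_\chi^0\cap W_P$, this count does not by itself ``confirm'' the presentation of $\mathscr H_{P,\chi}$, which must instead come from the support/length considerations above. With these two corrections your argument coincides with the paper's.
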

\begin{proof}
    %This will be done in two steps. First, we determine the module structure with respect to a Hecke algebra on $\overline{P}$; the essential input is Proposition \ref{ThetaBasics}, (\ref{TProp6}). Second, we translate the result to a Hecke algebra on $\overline{L}^{u}$; this is routine.
    %We begin step one. 
    
    Recall that $\mathscr{H}_{\chi}=\mathscr{H}(W_{\chi}^{0},S_{\chi})$, where $(W_{\chi}^{0},S_{\chi})$ is an affine Weyl group derived from the affine root system $\Phi_{\chi,\mathrm{af}}\subset \Phi_{\mathrm{af}}$. Given $y\in Y\otimes \mathbb{R}$, we can construct a sub-root system
    \begin{equation*}
        \Phi_{\chi,y}:=\{\alpha\in \Phi|\alpha-\alpha(y)\in \Phi_{\chi,af}\}.
    \end{equation*}
    
    We take $y\in \bar{\mathscr{A}}_{0}\subset \bar{\mathscr{A}}_{0,\chi}$. In this case, $\Phi_{\chi,y}$ is the root system of the finite type Weyl group $(W_{\chi,y}^{0},S_{\chi,y})$, where $S_{\chi,y}$ consists of the elements of $S_{\chi}$ that fix $y$. This defines a subalgebra $\mathscr{H}(W_{\chi,y}^{0},S_{\chi,y})\subset \mathscr{H}_{\chi}$. 
    
    Now take $P=P_{y}$. Since $y\in \bar{\mathscr{A}}_{0}$, $I\subset P$. Then $\mathscr{H}_{P,\chi}$ is equal to $\mathscr{H}_{\chi,y}$, viewing both algebras as subalgebras of $\mathscr{H}_{\chi}$. This follows because both algebras are spanned by the elements $t_{w}$, where $w\in W_{\chi,y}^{0}=W_{y}\cap W_{\chi}^{0}$.
    
    The proposition then follows directly from Proposition \ref{ThetaBasics}, (\ref{TProp6}) and Proposition \ref{ThetaPmod}.
    
\end{proof}

%Now we move to step two. 
The next lemma relates the Hecke algebras on $P$ to Hecke algebras on $\overline{L}^{u}$. The proof is routine and omitted.

\begin{lemma}
The surjective map $\overline{\wp}:\overline{L}^{u}\rightarrow \overline{L}$ induces an isomorphism of $\mathbb{C}$-algebras 
$$\overline{\wp}^{*}:\mathcal{H}_{P}\rightarrow \mathcal{H}_{\overline{L}^{u}}:=C^{\infty}_{c}(\overline{L}^{u}/(\mu_{q-1},\varepsilon\circ \overline{\wp}))$$
defined by 
\begin{equation*}
f\mapsto (g\mapsto f(\overline{\wp}(g))).
\end{equation*}
Furthermore, the idempotent $e_{\chi}\in \mathcal{H}_{P}$ is mapped to the idempotent of the character $\chi$ viewed as a character of $\underline{B}$. 
Thus 
$$\overline{\wp}^{*}:\mathcal{H}_{P,\chi}\rightarrow \mathcal{H}_{\overline{L}^{u},\chi}:=C^{\infty}_{c}((\underline{B},\chi)\backslash\overline{L}^{u}/(\underline{B},\chi))$$
is an isomorphism of $\mathbb{C}$-algebras.
\end{lemma}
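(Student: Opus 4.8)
The plan is to split the argument into two elementary pieces: a descent of $\mathcal{H}_{P}$ (and of $\mathcal{H}_{P,\chi}$) to the finite group $\overline{L}$, and then a formal statement about pullback of functions along the central isogeny $\overline{\wp}\colon\overline{L}^{u}\twoheadrightarrow\overline{L}$, followed by the identification of the image of $e_{\chi}$.

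First I would make the descent explicit. Since the Iwahori $I$ is contained in $P$, its pro-$p$ radical $I^{+}$ contains $P^{+}$, which is a normal pro-$p$ subgroup of $\overline{P}$ (recall that $P^{+}$ has a unique splitting into $\overline{G}$ because $\mathrm{GCD}(n,p)=1$, and this lift is normalized by $\overline{P}$, so $\overline{L}=\overline{P}/P^{+}$ makes sense). Hence every $f\in\mathcal{H}_{P}=C^{\infty}_{c,\varepsilon}(I^{+}\backslash\overline{P}/I^{+})$ is in particular $P^{+}$-biinvariant and descends to a genuine function $\bar f$ on $\overline{L}$, biinvariant under $\underline{U}:=I^{+}/P^{+}$, the unipotent radical of the Borel $\overline{I}/P^{+}$ of $\overline{L}$; conversely every such function on $\overline{L}$ inflates back. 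Tracking the normalization (the Haar measure on $\overline{G}$ with $\mathrm{vol}(I^{+})=1$ restricts, on functions supported in $\overline{P}$, to summation over $\overline{P}/I^{+}=\overline{L}/\underline{U}$) shows $f\mapsto\bar f$ is an isomorphism of convolution algebras onto $C_{\varepsilon}(\underline{U}\backslash\overline{L}/\underline{U})$. By its definition $e_{\chi}$ descends to the function on $\overline{L}$ supported on $\overline{I}/P^{+}$ corresponding to $\chi$ there, normalized to be idempotent; so this descent carries $\mathcal{H}_{P,\chi}=e_{\chi}\mathcal{H}_{P}e_{\chi}$ onto the $(\overline{I}/P^{+},\chi)$-compression of $C_{\varepsilon}(\overline{L})$.

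Second, $\overline{\wp}\colon\overline{L}^{u}\twoheadrightarrow\overline{L}$ is a surjection of finite groups with central kernel $\mu_{\frac{q-1}{n}}$, whose order divides $q-1$ and is therefore prime to $p$. The pullback $f\mapsto f\circ\overline{\wp}$ is injective and is an algebra homomorphism for the convolution products: for functions supported (after descent) in the relevant double cosets one computes directly $(f_{1}\circ\overline{\wp})*(f_{2}\circ\overline{\wp})=(f_{1}*f_{2})\circ\overline{\wp}$, using that $\overline{\wp}$ induces a bijection $\overline{L}^{u}/\overline{\wp}^{-1}(\underline{U})\xrightarrow{\sim}\overline{L}/\underline{U}$. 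For the genuine conditions, a $\mu_{n}$-genuine function on $\overline{L}$ pulls back to a function on $\overline{L}^{u}$ on which the central $\mathfrak{f}^{\times}\cong\mu_{q-1}$ acts through $\mu_{q-1}\xrightarrow{(-)^{\frac{q-1}{n}}}\mu_{n}\xrightarrow{\varepsilon}\mathbb{C}^{\times}$, which is the character built into the definition of $\mathcal{H}_{\overline{L}^{u}}$; and since the unipotent radical of $\underline{B}$ maps isomorphically onto $\underline{U}$ with $\overline{\wp}^{-1}(\underline{U})$ equal to its product with $\mu_{\frac{q-1}{n}}$, $\underline{U}$-biinvariance pulls back to $\overline{\wp}^{-1}(\underline{U})$-biinvariance. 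Composing pullback with the descent isomorphism of the previous paragraph therefore gives an algebra isomorphism $\overline{\wp}^{*}\colon\mathcal{H}_{P}\xrightarrow{\sim}\mathcal{H}_{\overline{L}^{u}}$, with inverse given by descent along $\overline{\wp}$ of $\mu_{\frac{q-1}{n}}$-invariant functions.

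Finally I would identify $\overline{\wp}^{*}(e_{\chi})$. Being the image of an idempotent under an algebra isomorphism, it is idempotent; it is supported on $\overline{\wp}^{-1}(\overline{I}/P^{+})=\underline{B}$; and its restriction to $\underline{B}$ is the inflation of $\chi$ (trivial on the unipotent radical and on $\mu_{\frac{q-1}{n}}$, equal to $\chi$ on $\overline{\wp}^{-1}(\overline{T}_{\mathfrak{o}}/T_{\mathfrak{o}}^{+})$). These properties characterize the idempotent of $\mathcal{H}_{\overline{L}^{u}}$ attached to $\chi$ viewed as a character of $\underline{B}$, so $\overline{\wp}^{*}(e_{\chi})$ is exactly that idempotent, and hence $\overline{\wp}^{*}$ restricts to an algebra isomorphism $\mathcal{H}_{P,\chi}=e_{\chi}\mathcal{H}_{P}e_{\chi}\xrightarrow{\sim}e_{\chi}\mathcal{H}_{\overline{L}^{u}}e_{\chi}=\mathcal{H}_{\overline{L}^{u},\chi}$. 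The whole argument is formal; the only point demanding care---the ``hard part,'' such as it is---is keeping straight the three layers of roots of unity $\mu_{\frac{q-1}{n}}\hookrightarrow\mu_{q-1}\twoheadrightarrow\mu_{n}$ together with their characters and the convolution normalizations, so that $\overline{\wp}^{*}$ is multiplicative on the nose rather than only up to a scalar.
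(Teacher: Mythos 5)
Your argument is correct, and since the paper itself gives no proof here (it declares the verification ``routine and omitted''), your two-step check---descending $\mathcal{H}_{P}$ through $P^{+}\subset I^{+}$ to $\varepsilon$-genuine functions on $\overline{L}$ biinvariant under $I^{+}/P^{+}$, then pulling back along the central isogeny $\overline{\wp}$ with kernel $\mu_{\frac{q-1}{n}}$ of order prime to $p$, keeping the Haar-measure normalizations matched so the map is multiplicative on the nose---is exactly the routine argument the authors intend, and your identification of $\overline{\wp}^{*}(e_{\chi})$ by support, idempotence, and restriction to $\underline{B}$ settles the second claim. The only point worth flagging is notational rather than mathematical: the target $\mathcal{H}_{\overline{L}^{u}}=C^{\infty}_{c}(\overline{L}^{u}/(\mu_{q-1},\varepsilon\circ\overline{\wp}))$ must be read as the subspace of such functions that are in addition biinvariant under $\overline{\wp}^{-1}(I^{+}/P^{+})$ (equivalently, under the unipotent radical of $\underline{B}$), which is exactly the image your proof identifies and which the paper's own definition of $\mathcal{H}_{\overline{L}^{u},\chi}$ via $(\underline{B},\chi)$-biinvariance makes explicit; surjectivity onto all centrally equivariant functions would of course fail.
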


Putting everything together and applying Corollary \ref{KWFS} gives the following corollary. Let 
$$\mathscr{H}_{\overline{L}^{u},\chi}:=\overline{\wp}^{*}(\mathscr{H}_{P,\chi}).$$

\begin{corollary}\label{ParaKWFS}
    Let $\mathcal{O}\subset\mathscr{Z}$ be a $W_{P}$-orbit and let $\sigma_{\mathcal{O}}\subset \Theta^{P^{+}}$ be an irreducible $\overline{L}^{u}$-module. 
    Then for any $\chi\in\mathcal{O}$, the $\mathscr{H}_{\overline{L}^{u},\chi}$-module $\sigma_{\mathcal{O}}^{(\underline{B},\chi)}$ is isomorphic to the index character. In particular, 
    $$\mathrm{WF}^{s}(\sigma_{\mathcal{O}})=\mathcal{O}_{\mathrm{Spr}}(j_{W_{\chi,y}^{0}}^{W_{y}}\mathrm{sgn}).$$
\end{corollary}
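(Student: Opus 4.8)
The plan is to assemble Corollary \ref{ParaKWFS} directly from the pieces that have already been put in place, so the proof is essentially a bookkeeping argument. First I would recall, via the lemma immediately preceding the corollary, that $\overline{\wp}^{*}$ furnishes an isomorphism of $\mathbb{C}$-algebras $\mathcal{H}_{P,\chi}\xrightarrow{\sim}\mathcal{H}_{\overline{L}^{u},\chi}$ carrying the subalgebra $\mathscr{H}_{P,\chi}$ onto $\mathscr{H}_{\overline{L}^{u},\chi}$, and that under $\overline{\wp}$ the spaces $\sigma_{\mathcal{O}}^{(\overline{I},\chi)}$ and $\sigma_{\mathcal{O}}^{(\underline{B},\chi)}$ are identified compatibly with these algebra actions (inflation along $\overline{\wp}$ intertwines the two module structures). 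Hence it suffices to transport the statement of Proposition \ref{ThetaParaHecke}, which says the $\mathscr{H}_{P,\chi}$-module $\sigma_{\mathcal{O}}^{(\overline{I},\chi)}$ is the index character, across these identifications to conclude that the $\mathscr{H}_{\overline{L}^{u},\chi}$-module $\sigma_{\mathcal{O}}^{(\underline{B},\chi)}$ is isomorphic to the index character of $\mathscr{H}_{\overline{L}^{u},\chi}$.

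With the Hecke-module identification in hand, the second half is an application of Corollary \ref{KWFS}. I would take $\mathbb{G}=\overline{\mathbf{L}}^{u}$, $G=\overline{L}^{u}=\overline{\mathbf{L}}^{u}(\mathfrak{f})$, with Borel $\underline{B}$ and the character $\chi$ of $\underline{B}$; the lemma at the start of this subsection shows $\sigma_{\mathcal{O}}$ is a constituent of $\mathrm{Ind}_{\underline{B}}^{\overline{L}^{u}}(\chi)$. One must check the bookkeeping that the ``$\mathscr{H}$'' of Corollary \ref{KWFS} — the span of the basis elements indexed by the Weyl group $W_{\chi}$ of the subsystem $\Phi_{\chi}\subset \Phi(\overline{\mathbf{L}}^{u},\underline{T})$ of roots on which $\chi$ is trivial — coincides with $\mathscr{H}_{\overline{L}^{u},\chi}$; this is exactly the content of the analysis in the proof of Proposition \ref{ThetaParaHecke}, where the relevant finite Weyl group is $W_{\chi,y}^{0}=W_{y}\cap W_{\chi}^{0}$ with root system $\Phi_{\chi,y}$, realised as reflections $s_{a}$, $a\in\Delta_{\chi}$, fixing $y$. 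Identifying $W$ in Corollary \ref{KWFS} with $W_{y}$ (the Weyl group of $\overline{\mathbf{L}}^{u}$, equivalently of $\mathbf{L}_{y}$) and $W_{\chi}$ there with $W_{\chi,y}^{0}$, Corollary \ref{KWFS} yields $\mathrm{WF}^{s}(\sigma_{\mathcal{O}})=\mathcal{O}_{\mathrm{Spr}}(j_{W_{\chi,y}^{0}}^{W_{y}}(\mathrm{sgn}))$, as claimed. One should also note that Hypothesis \ref{hyp:wf} for $\overline{\mathbf{L}}^{u}$ follows from the running assumption that the residue characteristic is good for $G_{x,0}/G_{x,0^+}$ (Hypothesis \ref{hyp:lift}), since $\overline{\mathbf{L}}^{u}$ is an isogeny-type variant of $\mathbf{L}_{x}$ with the same root datum up to a central torus.

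The only genuinely delicate point — and the step I would expect to be the main obstacle — is verifying that the index-character property is preserved under \emph{all three} identifications simultaneously: (i) the algebra isomorphism $\overline{\wp}^{*}$; (ii) the restriction of module structures from $\mathcal{H}_{\chi}$ to the subalgebra $\mathscr{H}_{\chi}$ and its intersection with $\mathcal{H}_{P,\chi}$; and (iii) the translation of the ``intrinsic'' Hecke algebra of the finite reductive group $\overline{L}^{u}$ (as in the proposition quoted from \cite{Geck_Malle_2020}) into the language of the $e_{\chi}\mathcal{H}e_{\chi}$ formalism coming from the $p$-adic group. Concretely, one must make sure that the standard basis element $t_{s_a}$ for $s_a\in \Delta_{\chi}$ acting as the scalar $q$ on $\sigma_{\mathcal{O}}^{(\overline I,\chi)}$ (which is what Proposition \ref{ThetaParaHecke} gives, via Proposition \ref{ThetaBasics}(\ref{TProp6})) really does correspond, under $\overline{\wp}^{*}$ and the finite-group Hecke presentation, to the basis element $T_{s_a}$ of $\mathcal{H}(W_{\chi,y}^{0},q)$ acting by the index character's value; the normalisation of Haar measures and of the idempotents $e_\chi$ must be tracked carefully. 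Once that compatibility is pinned down, the corollary is immediate, so I would present the proof as: quote Proposition \ref{ThetaParaHecke}, transport along the lemma's isomorphism, check the Hecke-algebra bookkeeping matches the hypotheses of Corollary \ref{KWFS}, and invoke it.
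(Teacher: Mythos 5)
Your proposal is correct and follows essentially the same route as the paper, which proves the corollary simply by combining the preceding lemma on $\overline{\wp}^{*}$, Proposition \ref{ThetaParaHecke}, and Corollary \ref{KWFS}, together with the observation that hypothesis \ref{hyp:lift} guarantees hypothesis \ref{hyp:wf} for the reductive quotient. The ``delicate point'' you flag --- matching $\mathscr{H}_{\overline{L}^{u},\chi}$ with the intrinsic subalgebra $\mathscr{H}$ of Corollary \ref{KWFS} via the root datum of $\overline{\mathbf{L}}^{u}$ --- is exactly what the paper handles through the definition of $\Phi_{\chi,\mathrm{af}}$ and the reference to \cite{W11}, so your accounting of it is consistent with the paper's argument.
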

Note that hypothesis \ref{hyp:lift} implies that hypothesis \ref{hyp:wf} holds for $P/P^+$ and so we can indeed apply \ref{KWFS}.

\subsection{A combinatorial reduction}\label{CombRed}
We perform one more reduction. 
Specifically, we describe the root system of the Weyl group $W_{\chi,y}^{0}$ purely in terms of root theoretic data.

By the definition of $\Phi_{\chi,af}$,
\begin{equation}\label{ChiyRootSys}
    \Phi_{\chi,y}=\{\alpha\in\Phi_{y}| \chi((\varpi,u)^{-\alpha(y)Q(\alpha^{\vee})}\overline{h}_{\alpha}(u))=1\text{ for all }u\in \mathfrak{o}^{\times}\}.
\end{equation}
\begin{remark}
    The factor $(\varpi,u)^{\alpha(y)Q(\alpha^{\vee})}$ appears because in covering groups the coroots of the reductive quotients of parahoric subgroups can change as you move through different points of the building. This phenomenon is carefully studied in \cite{W11}.
\end{remark}

We can describe $\Phi_{\chi,y}$ in terms of root theoretic data using Lemmas \ref{SimpTorChar} and \ref{BQtoCP}.

\begin{proposition}\label{SimpPhiChi} Let $y\in Y\otimes \mathbb{Q}$  and let $\chi\in \mathscr{Z}$. Then there exists $y_{\chi}\in Y^{sc}\otimes \mathbb{Q}$ a coweight such that 
    \begin{equation*}
        \Phi_{y,\chi}=\{\alpha\in \Phi_{y}| Q(\alpha^{\vee})\alpha(y-y_{\chi})\in n\mathbb{Z}\}.
    \end{equation*}
\end{proposition}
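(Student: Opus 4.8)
The plan is to derive the formula by combining equation \eqref{ChiyRootSys}, which describes $\Phi_{\chi,y}$ in terms of the values $\chi(\overline{h}_{\alpha}(u))$, with the explicit description of those values supplied by Lemma \ref{SimpTorChar}. First I would invoke Lemma \ref{SimpTorChar} to obtain the coweight $y_{\chi}\in Y^{sc}\otimes\mathbb{Q}$ satisfying
\begin{equation*}
\chi(\overline{h}_{\alpha}(u))=(\varpi,u)_{n}^{B_{Q}(y_{\chi},\alpha^{\vee})}\qquad\text{for all }\alpha\in\Phi,\ u\in\mathfrak{o}^{\times}.
\end{equation*}
Next, starting from \eqref{ChiyRootSys}, for $\alpha\in\Phi_{y}$ and $u\in\mathfrak{o}^{\times}$ I would expand the argument of $\chi$ as a product and use multiplicativity of $\chi$ together with the displayed identity to get
\begin{equation*}
\chi\big((\varpi,u)^{-\alpha(y)Q(\alpha^{\vee})}\overline{h}_{\alpha}(u)\big)=(\varpi,u)_{n}^{-\alpha(y)Q(\alpha^{\vee})}\cdot(\varpi,u)_{n}^{B_{Q}(y_{\chi},\alpha^{\vee})}=(\varpi,u)_{n}^{B_{Q}(y_{\chi},\alpha^{\vee})-\alpha(y)Q(\alpha^{\vee})},
\end{equation*}
where I am implicitly using that $(\varpi,u)^{k}$ on the left, for $k\in\mathbb{Z}$, maps into $\mu_{n}$ the same way, i.e. that the exponent only matters modulo $n$ and $\chi$ is genuine so it acts on $\mu_{n}$ via a faithful character. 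Then I would apply Lemma \ref{BQtoCP}, which gives $B_{Q}(y_{\chi},\alpha^{\vee})=Q(\alpha^{\vee})\langle\alpha,y_{\chi}\rangle$, to rewrite the exponent as $Q(\alpha^{\vee})(\langle\alpha,y_{\chi}\rangle-\alpha(y))=-Q(\alpha^{\vee})\alpha(y-y_{\chi})$.

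The condition $\chi\big((\varpi,u)^{-\alpha(y)Q(\alpha^{\vee})}\overline{h}_{\alpha}(u)\big)=1$ for all $u\in\mathfrak{o}^{\times}$ then becomes the statement that $(\varpi,u)_{n}^{Q(\alpha^{\vee})\alpha(y-y_{\chi})}=1$ for all such $u$; here I should note that $\alpha(y-y_{\chi})$ is a priori rational, not integral, since $y\in Y\otimes\mathbb{Q}$ and $y_{\chi}$ is only a coweight, so some care is needed in interpreting $(\varpi,u)^{r}$ for $r\in\mathbb{Q}$ — but in fact the way $\Phi_{\chi,\mathrm{af}}$ is defined in \eqref{DefPhiChiAf} only involves integer exponents $kQ(\alpha^{\vee})$, and in \eqref{ChiyRootSys} the quantity $\alpha(y)$ is the integer $k$ with $\alpha-k\in\Phi_{\chi,\mathrm{af}}$ when $\alpha\in\Phi_{y}$, so the exponent $Q(\alpha^{\vee})\alpha(y)$ is genuinely an integer; the subtraction of $Q(\alpha^{\vee})\langle\alpha,y_{\chi}\rangle$ is also an integer by the property $(y_{\chi}\mid y')=\langle\omega_{\ast},y'\rangle\in\mathbb{Z}$ established in the proof of Lemma \ref{SimpTorChar}, or more directly because $k_{\alpha}\in\mathbb{Z}$ there. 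Since $u\mapsto(\varpi,u)_{n}$ is a surjection $\mathfrak{o}^{\times}\twoheadrightarrow\mu_{n}$ (the unramified Hilbert symbol restricted to units is surjective onto $\mu_{n}$), the condition $(\varpi,u)_{n}^{m}=1$ for all $u\in\mathfrak{o}^{\times}$ is equivalent to $m\equiv 0\pmod n$. Applying this with $m=Q(\alpha^{\vee})\alpha(y-y_{\chi})$ gives exactly $Q(\alpha^{\vee})\alpha(y-y_{\chi})\in n\mathbb{Z}$, which is the desired description of $\Phi_{y,\chi}$.

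I expect the main obstacle to be the bookkeeping around integrality and the precise interpretation of the Hilbert-symbol exponents when $y$ is only rational: one must be careful that the exponent appearing in \eqref{ChiyRootSys}, namely $-\alpha(y)Q(\alpha^{\vee})$, is really the integer $k$ coming from the affine root $\alpha+k$ (equivalently that $\alpha\in\Phi_{y}$ forces $\alpha(y)\in\mathbb{Z}$, which is immediate from the definition $\Phi_{y}=\{\alpha\in\Phi\mid\alpha(y)\in\mathbb{Z}\}$), and that the substitution $B_{Q}(y_{\chi},\alpha^{\vee})=Q(\alpha^{\vee})\langle\alpha,y_{\chi}\rangle$ from Lemma \ref{BQtoCP} is valid for the possibly-non-lattice element $y_{\chi}\in Y^{sc}\otimes\mathbb{Q}$ — but that lemma's proof only uses $W$-invariance of $B_{Q}$ extended $\mathbb{Q}$-linearly, so it applies verbatim. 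The rest is a direct and essentially mechanical chain of substitutions, plus the standard fact that the unramified Hilbert symbol pairs $\varpi$ with $\mathfrak{o}^{\times}$ surjectively onto $\mu_{n}$.
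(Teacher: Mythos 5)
Your proposal is correct and follows exactly the route the paper intends: the paper gives no separate argument beyond stating that equation \eqref{ChiyRootSys} combined with Lemmas \ref{SimpTorChar} and \ref{BQtoCP} yields the description, which is precisely the substitution-and-surjectivity computation you carry out (including the integrality points, which are indeed the only details worth checking).
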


Given $y\in Y\otimes \mathbb{R}$ we define the following subroot system of $\Phi$,
\begin{equation*}
    \Phi^{Q,n}_{y}:=\{\alpha\in \Phi_{y}|Q(\alpha^{\vee})\alpha(y)\in n\mathbb{Z}\}.
\end{equation*}

\begin{lemma}\label{SimpRootSys}
   Let $y\in Y\otimes \mathbb{R}$  and let $\chi\in \mathscr{Z}$. Then there exists $x\in Y^{sc}\otimes \mathbb{R}$ a coweight such that
   \begin{equation*}
       \Phi_{y,\chi}=\Phi^{Q,n}_{y+x}
   \end{equation*}
\end{lemma}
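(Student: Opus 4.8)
The plan is to reduce Lemma~\ref{SimpRootSys} to Proposition~\ref{SimpPhiChi} by absorbing the coweight $y_\chi$ and the ambient shift into a single translation. Recall that Proposition~\ref{SimpPhiChi} already produces, for the given $y$ and $\chi$, a coweight $y_\chi\in Y^{sc}\otimes\mathbb Q$ with
\begin{equation*}
    \Phi_{y,\chi}=\{\alpha\in\Phi_y\mid Q(\alpha^\vee)\,\alpha(y-y_\chi)\in n\mathbb Z\}.
\end{equation*}
On the other hand, by definition $\Phi^{Q,n}_{y-y_\chi}=\{\alpha\in\Phi_{y-y_\chi}\mid Q(\alpha^\vee)\,\alpha(y-y_\chi)\in n\mathbb Z\}$. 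So the natural candidate is simply $x:=-y_\chi$, and the only thing to check is that the two descriptions really do cut out the same set, i.e.\ that replacing the index set $\Phi_y$ by $\Phi_{y-y_\chi}$ makes no difference \emph{once we also impose the congruence condition} $Q(\alpha^\vee)\alpha(y-y_\chi)\in n\mathbb Z$.

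First I would spell out that $\Phi_y=\{\alpha\in\Phi\mid \alpha(y)\in\mathbb Z\}$ and $\Phi_{y-y_\chi}=\{\alpha\in\Phi\mid \alpha(y-y_\chi)\in\mathbb Z\}$ (the root systems of the parahoric stabilizers at $y$ and at $y-y_\chi$). The claim is the equality of
\begin{equation*}
    S_1:=\{\alpha\in\Phi\mid \alpha(y)\in\mathbb Z,\ Q(\alpha^\vee)\alpha(y-y_\chi)\in n\mathbb Z\}
\end{equation*}
and
\begin{equation*}
    S_2:=\{\alpha\in\Phi\mid \alpha(y-y_\chi)\in\mathbb Z,\ Q(\alpha^\vee)\alpha(y-y_\chi)\in n\mathbb Z\}.
\end{equation*}
For the inclusion $S_1\subseteq S_2$: if $\alpha(y)\in\mathbb Z$, I need $\alpha(y-y_\chi)\in\mathbb Z$, equivalently $\alpha(y_\chi)\in\mathbb Z$. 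But $y_\chi$ is a \emph{coweight}, so $\langle\beta,y_\chi\rangle\in\mathbb Z$ for every $\beta\in X$, in particular for $\beta=\alpha$; hence $\alpha(y-y_\chi)\in\mathbb Z$ automatically. The reverse inclusion $S_2\subseteq S_1$ is the same argument run backwards: $\alpha(y-y_\chi)\in\mathbb Z$ and $\alpha(y_\chi)\in\mathbb Z$ give $\alpha(y)\in\mathbb Z$. Thus $S_1=S_2$, and since $S_1=\Phi_{y,\chi}$ by Proposition~\ref{SimpPhiChi} while $S_2=\Phi^{Q,n}_{y-y_\chi}$ by definition, taking $x=-y_\chi$ completes the proof. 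I would also remark that over $\mathbb R$ rather than $\mathbb Q$ nothing changes: Proposition~\ref{SimpPhiChi} is stated for $y\in Y\otimes\mathbb Q$ but its proof (via Lemma~\ref{SimpTorChar}, which produces $y_\chi$ independently of $y$) applies verbatim for $y\in Y\otimes\mathbb R$, and the coweight $y_\chi$ still lies in $Y^{sc}\otimes\mathbb Q\subset Y^{sc}\otimes\mathbb R$.

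The only genuinely delicate point — and the one I would be careful to state explicitly — is that the integrality of $\alpha(y_\chi)$ uses precisely the word ``coweight'' in Proposition~\ref{SimpPhiChi}: a priori $y_\chi$ only lies in $Y^{sc}\otimes\mathbb Q$, and the pairing with a root need not be integral for an arbitrary such element; it is integral exactly because the proposition asserts $y_\chi$ is a coweight (i.e.\ pairs integrally with all of $X\supseteq\Phi$). Everything else is formal bookkeeping. One small subtlety worth a sentence: $x=-y_\chi$ need not lie in $Y^{sc}\otimes\mathbb R$ as an \emph{integral} element, but the lemma only asks for a coweight in $Y^{sc}\otimes\mathbb R$, which $-y_\chi$ is; no further normalization is needed.
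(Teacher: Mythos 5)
Your proof is correct and is essentially the paper's own argument: the paper simply observes that $\Phi_{y}=\Phi_{y+\omega'}$ for any coweight $\omega'$ and applies Proposition~\ref{SimpPhiChi} with $x=-y_{\chi}$, exactly as you do in more detail (including the harmless passage from $\mathbb{Q}$ to $\mathbb{R}$, since $y_\chi$ depends only on $\chi$). One minor slip: a coweight of $\Phi$ pairs integrally with the root lattice, not necessarily with all of $X$, but your argument only uses $\alpha(y_{\chi})\in\mathbb{Z}$ for $\alpha\in\Phi$, so nothing is affected.
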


\begin{proof}
    For any coweight $\omega^{\prime}\in Y\otimes \mathbb{Q}$ we have $\Phi_{y}=\Phi_{y+\omega^{\prime}}$. Now apply Proposition \ref{SimpPhiChi}.
\end{proof}

The next proposition is the main result of this section. 
It reduces the problem of computing the wave front set to a combinatorial problem, which we solve in Section \ref{WFSComp}. The main inputs in the proof of this proposition are Corollary \ref{ParaKWFS} and Theorem \ref{StableWFS}.

\begin{proposition}\label{WFThetaRedtoComb}
    Let $\nu\in X\otimes \mathbb{R}$ be an exceptional character. Let $\pi^{\dagger}\in\mathrm{Irr}(\overline{T})$ be distinuished and $\varepsilon$-genuine. Let $\Theta=\Theta(\pi^{\dagger},\nu)$ be a Theta representation. 

    Then
    \begin{equation*}
       \mathrm{WF}^{s}(\Theta)(\mathbb C)=\max \{\mathrm{Sat}_{\mathbb G_y}^{\mathbb G}(\mathcal{O}_{\mathrm{Spr}}(j_{W(\Phi_{y}^{Q,n})}^{W(\Phi_{y})}\mathrm{sgn})(\mathbb C)): y\in \mathscr V\},
    \end{equation*}
    where $\mathscr{V}$ is the set of vertices in the apartment for $T$.
\end{proposition}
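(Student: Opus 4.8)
The plan is to assemble Theorem~\ref{StableWFS} with the structural results of the preceding subsections. First I would observe that $\Theta$ is admissible, being a subquotient of a principal series, and of depth $0$: by Proposition~\ref{ThetaBasics}, \eqref{TProp2}, it has nonzero $I^{+}$-invariants, where $I^{+}$ is the (splitting of the) pro-unipotent radical of an Iwahori subgroup. By Lemma~\ref{WFPullBack} and its proof — which shows the relevant local character expansions coincide — it is enough to compute the stable wave front set of the inflation $\Theta^{u}$ of $\Theta$ to the universal tame extension $\overline{G}^{u}$. Theorem~\ref{StableWFS} then applies, taking for $\mathscr C$ the set $\mathscr V$ of \emph{all} vertices of the apartment of $T$; this is a legitimate choice because, by the realisation statement in DeBacker's theorem, every nilpotent orbit of $\mathfrak g$ arises from some vertex of a fixed apartment. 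We obtain
$$\mathrm{WF}^{s}(\Theta)(\mathbb C)=\max\Big\{\mathrm{Sat}_{\mathbb G_{y}}^{\mathbb G}\big(\mathrm{WF}^{s}(\sigma)(\mathbb C)\big):y\in\mathscr V,\ \sigma\subset(\Theta^{u})^{G_{y,0^{+}}}\ \text{irreducible}\Big\}.$$

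Next I would evaluate the inner set for each $y\in\mathscr V$. Identifying $(\Theta^{u})^{G_{y,0^{+}}}$ with $\Theta^{P_{y}^{+}}$ inflated along $\overline{\wp}\colon\overline{L}^{u}\to\overline{P}_{y}/P_{y}^{+}$, Proposition~\ref{ThetaPmod} gives $(\Theta^{u})^{G_{y,0^{+}}}\simeq\bigoplus_{\mathcal O}\sigma_{\mathcal O}$, the sum over the $W_{P_{y}}$-orbits $\mathcal O\subset\mathscr Z$. (Proposition~\ref{ThetaPmod} is stated for parahorics containing the standard Iwahori, but the same argument — with $I$ replaced by an Iwahori of an alcove having $y$ in its closure — applies at every vertex of the apartment and leaves $\mathscr Z$, $\Phi_{y}$ and $\mathbb G_{y}$ unchanged; likewise for Corollary~\ref{ParaKWFS}.) By Corollary~\ref{ParaKWFS}, for any $\chi\in\mathcal O$ we have $\mathrm{WF}^{s}(\sigma_{\mathcal O})=\mathcal O_{\mathrm{Spr}}\big(j_{W(\Phi_{y,\chi})}^{W(\Phi_{y})}\mathrm{sgn}\big)$. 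By Lemma~\ref{SimpRootSys} there is a coweight $x_{\chi}\in Y^{sc}\otimes\mathbb Q$, depending only on $\chi$, with $\Phi_{y,\chi}=\Phi_{y+x_{\chi}}^{Q,n}$; since $\langle\alpha,x_{\chi}\rangle\in\mathbb Z$ for all $\alpha\in\Phi$ we also have $\Phi_{y+x_{\chi}}=\Phi_{y}$, hence $W(\Phi_{y})=W(\Phi_{y+x_{\chi}})$ and $\mathbb G_{y}=\mathbb G_{y+x_{\chi}}$. Writing $F(z):=\mathrm{Sat}_{\mathbb G_{z}}^{\mathbb G}\big(\mathcal O_{\mathrm{Spr}}(j_{W(\Phi_{z}^{Q,n})}^{W(\Phi_{z})}\mathrm{sgn})(\mathbb C)\big)$ for $z$ a vertex of the apartment, each summand above contributes $F(y+x_{\chi})$, so
$$\mathrm{WF}^{s}(\Theta)(\mathbb C)=\max\big\{F(y+x_{\chi}):y\in\mathscr V,\ \chi\in\mathscr Z\big\}.$$

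It remains to collapse this index set to $\mathscr V$. Because $x_{\chi}$ is a coweight, i.e. $\langle\alpha,x_{\chi}\rangle\in\mathbb Z$ for every $\alpha\in\Phi$, translation by $x_{\chi}$ carries each affine root hyperplane $H_{\alpha+k}$ ($\alpha\in\Phi,\ k\in\mathbb Z$) to the hyperplane $H_{\alpha+k-\langle\alpha,x_{\chi}\rangle}$; it therefore permutes the facets of the apartment, and in particular $\mathscr V+x_{\chi}=\mathscr V$ for every $\chi\in\mathscr Z$. Hence $\{y+x_{\chi}:y\in\mathscr V,\ \chi\in\mathscr Z\}=\mathscr V$, and the last display is exactly $\max\{F(y):y\in\mathscr V\}$, which is the assertion.

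The one substantive point is this final reduction. It requires choosing $\mathscr C$ in Theorem~\ref{StableWFS} to be the entire vertex set, rather than (say) only the vertices of the base alcove, so that the translation by $x_{\chi}$ can be absorbed; and it requires knowing — from the explicit construction of $x_{\chi}$ in Lemmas~\ref{SimpTorChar} and~\ref{SimpRootSys} — that $x_{\chi}$ is a genuine coweight, so that it acts on the building by a vertex-preserving translation. The remaining steps are a direct assembly of Theorem~\ref{StableWFS}, Proposition~\ref{ThetaPmod}, Corollary~\ref{ParaKWFS} and Lemma~\ref{SimpRootSys}, the only routine wrinkle being the extension of the last two from the standard-Iwahori setting to an arbitrary vertex of the apartment.
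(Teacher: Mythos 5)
Your proof is correct and follows essentially the same route as the paper: pull back to $\overline{G}^{u}$ via Lemma~\ref{WFPullBack}, apply Theorem~\ref{StableWFS} with $\mathscr C=\mathscr V$, feed in Proposition~\ref{ThetaPmod} and Corollary~\ref{ParaKWFS}, and convert $\Phi_{y,\chi}$ to $\Phi_{y+x_\chi}^{Q,n}$ via Lemma~\ref{SimpRootSys}. Your only additions are to spell out details the paper leaves implicit, namely the final collapse of the index set using that translation by the coweight $x_\chi$ preserves $\mathscr V$, $\Phi_y$ and $\mathbb G_y$, and the remark that Proposition~\ref{ThetaPmod} and Corollary~\ref{ParaKWFS} extend from the standard Iwahori to an Iwahori adapted to an arbitrary vertex of the apartment.
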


\begin{proof}
    By Corollary \ref{WFPullBack} we may pullback $\Theta$ back to $\overline{G}^{u}$ to compute the wave front set. Now we can apply Theorem \ref{StableWFS} taking $\mathscr{C}=\mathscr{V}$ and $\pi=\Theta$. By Theorem \ref{StableWFS},
    \begin{equation*}
        \mathrm{WF}^{s}(\Theta)=\max \{\mathbb G.\WF^s(\sigma)(\mathbb C):\sigma\subset \Theta^{P_{y}^{+}} \text{ irreducible}, y\in \mathscr V\}.
    \end{equation*}
    Here $\Theta^{P_{y}^{+}}$ is viewed as an $\overline{L}^{u}_{y}$-module via pullback.
    For $y\in \mathscr{V}$ and any irreducible $\overline{L}^{u}$-module $\sigma\subset \Theta^{P_{y}^{+}}$ we apply Corollary \ref{ParaKWFS} to get
    \begin{equation*}
        \mathrm{WF}^{s}(\sigma)(\mathbb{C})=\mathcal{O}_{\mathrm{Spr}}(j_{W_{\chi,y}^{0}}^{W_{y}}\mathrm{sgn}),
    \end{equation*}
    where $\chi\in\mathscr{Z}$ is a $T_{\mathfrak{o}}$-module constituent of $\sigma$. Note that $j_{W_{\chi,y}^{0}}^{W_{y}}\mathrm{sgn}$ only depends on the underlying root systems. Since $W_{\chi,y}^{0}$ is the Weyl group of the root system $\Phi_{y,\chi}$ we can apply Lemma \ref{SimpRootSys} to see that there exists a coweight $x\in Y\otimes \mathbb{R}$ such that $\Phi_{y,\chi}=\Phi_{y+x}^{Q,n}$.

    Thus 
    \begin{equation*}
        \mathrm{WF}^{s}(\Theta)(\mathbb C)=\max \{\mathrm{Sat}_{\mathbb G_y}^{\mathbb G}(\mathcal{O}_{\mathrm{Spr}}(j_{W(\Phi_{y}^{Q,n})}^{W(\Phi_{y})}\sgn)(\mathbb C)): y\in \mathscr V\}.
    \end{equation*}
\end{proof}

\begin{comment}
Note that $\chi_{0}(\mathbf{s}(y(u)^{n}))=1$ for any $y\in Y$ \cite[Section 6.4]{GG18}. Let $\{y_{j}\}$ be a $\mathbb{Z}$-basis for $Y$. Then there exists $k_{j}\in \mathbb{Z}$ such that 
\begin{equation*}
    \chi_{0}(\mathbf{s}(y_{j}(u)))=(\varpi,u)^{k_{j}}.
\end{equation*}
Since $\chi_{0}$ is a homomorphism, the map $Y\rightarrow \mathbb{Z}$ defined by $y_{j}\mapsto k_{j}$ is in $X=\mathrm{Hom}(Y,\mathbb{Z})$. Let $x\in X$ be such that for all $j$
\begin{equation*}
    \langle x,y_{j}\rangle=k_{j}.
\end{equation*}

Let $W_{P,\chi}^{\prime}:=\mathrm{Stab}_{W_{P}}(\chi)$.

\subsection{Some Lemmas}

\begin{lemma}
    Fix $u_{0}\in\mathfrak{o}^{\times}$ such that $u_{0}$ represents a primitive root of $\mathfrak{f}^{\times}$. Then the map $Y/Y_{Q,n}\hookrightarrow \overline{T}_{\mathfrak{o}}/\overline{T}_{\mathfrak{o}}\cap Z(\overline{T})$ defined by $y\mapsto \mathbf{s}(y(u_{0}))Z(\overline{T})$ is an injection.
\end{lemma}

\subsection{General case for classical groups}
Not complete. NEed to modify slightly for differn parities of $n$ slightly.
\begin{lemma}
    It suffices to show that
    %
    \begin{align}
        j_{W(C_k)\times W(C_{r-k})}^{W(C_r)}\lambda(2k;n)_C\boxtimes \lambda(2r-2k;n)_C \le \lambda(2r;n)_C. \\
        j_{W(D_k)\times W(B_{r-k})}^{W(B_r)}\lambda(2k;n)_D\boxtimes \lambda(2r-2k+1;n)_B \le \lambda(2r+1;n)_B. \\
        j_{W(D_k)\times W(D_{r-k})}^{W(D_r)}\lambda(2k;n)_D\boxtimes \lambda(2r-2k;n)_D \le \lambda(2r;n)_D. 
    \end{align}
    %
\end{lemma}

\end{comment}

\section{Computation of wave front Set}\label{WFSComp}
In this section we prove the following purely combinatorial result.

\begin{proposition}\label{prop:combo}
    %Fix a pair $(Q,n)$ consisting of a $W$-invariant quadratic form on $Y$ and $n\in \mathbb N$. 
    Let $\nu\in X\otimes \mathbb{R}$ be an exceptional character. % for $(Q,n)$.
    %and assume that $ \Phi $ is irreducible and that the quadratic form $ Q $ takes value 1 on the short coroots. 
    Then
    \begin{equation}\label{CombComp}
        \max_{y\in \mathscr{V}}\mathrm{Sat}_{\mathbb G_y}^{\mathbb G}(\mathcal O_{\rm Spr}(j_{W(\Phi_y^{Q,n})}^{W(\Phi_y)}\sgn)(\mathbb C)) = \mca{O}_{\rm Spr}(j_{W_\nu}^W \sgn)(\mathbb C).
    \end{equation}
\end{proposition}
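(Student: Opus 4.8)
The plan is to establish the two inequalities in the closure order separately. Throughout we reduce to $\Phi$ irreducible, since every quantity occurring in \eqref{CombComp} --- the root systems $\Phi_y$ and $\Phi_y^{Q,n}$, the group $W_\nu$, $j$-induction, the Springer correspondence, and the saturation $\mathrm{Sat}$ --- factors through the irreducible components of $\Phi$; the extreme cases $Q\equiv 0$ and ``$n\mid Q(\alpha^\vee)$ for all $\alpha$'' (where $W_\nu=W$ and both sides are the zero orbit) are immediate, so we may assume we lie strictly between them.

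I would first record the explicit combinatorial shape of the data. For a vertex $y$ of the apartment, $\Phi_y=\{\alpha\in\Phi:\alpha(y)\in\mathbb Z\}$ is, by Borel--de Siebenthal, obtained by deleting one node from the affine Dynkin diagram, hence in the classical types is a product of at most two classical root systems of strictly smaller rank; and, by Lemma \ref{BQtoCP}, $\Phi_y^{Q,n}=\{\alpha\in\Phi_y:B_Q(y,\alpha^\vee)\in n\mathbb Z\}$ depends on $y$ only through $\Phi_y$ and $y\bmod Y_{Q,n}$, so up to $W_{\mathrm{ea}}$-equivalence there are finitely many pairs $(\Phi_y,\Phi_y^{Q,n})$. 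Restricting $y$ to each simple factor of $\Phi_y$ exhibits $(\Phi_y,\Phi_y^{Q,n})$ factor-by-factor as a configuration of exactly the kind appearing on the left of \eqref{CombComp}, but of smaller rank; this is what makes an induction on $\mathrm{rank}(\Phi)$ possible in the classical types. On the right, I would identify $W_\nu$ with the integral Weyl group $W(\Phi_\nu)$, $\Phi_\nu:=\{\alpha\in\Phi:\langle\nu,\alpha^\vee\rangle\in\mathbb Z\}$, and invoke the explicit description of $\mathcal O_{\mathrm{Spr}}(j_{W_\nu}^W\sgn)(\mathbb C)$ from Gao--Liu--Tsai \cite{glt}: in the classical types it is an explicit partition $\lambda(\mathrm{rank};n)$ depending only on the rank and on $n$ modulo small integers, and in the exceptional types an explicitly tabulated orbit.

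For the upper bound I would, for each vertex $y$, compute $\underline{\mathcal O}_y:=\mathcal O_{\mathrm{Spr}}^{\mathbb G_y}(j_{W(\Phi_y^{Q,n})}^{W(\Phi_y)}\sgn)(\mathbb C)$ inside $\mathbb G_y$ by the inductive description above, and then bound $\mathrm{Sat}_{\mathbb G_y}^{\mathbb G}(\underline{\mathcal O}_y)$, recalling from Lemma \ref{lem:lifting} that $\mathrm{Sat}$ merely takes the $\mathbb G$-orbit of a representative, so that for classical $\mathbb G$ it sends a pair of partitions of the two factors of $\mathbb G_y$ to their ($\mathbb G$-collapsed) union. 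The claim $\max_y(\cdots)\le\mathcal O_{\mathrm{Spr}}(j_{W_\nu}^W\sgn)(\mathbb C)$ (which is not a formal consequence of Proposition \ref{WFThetaRedtoComb}, but part of what must be shown) then reduces in each classical type to a dominance-order inequality of the form $j_{W(C_k)\times W(C_{r-k})}^{W(C_r)}\big(\lambda(2k;n)_C\boxtimes\lambda(2(r-k);n)_C\big)\le\lambda(2r;n)_C$ and its analogues in types $B$ and $D$; these I would prove by direct manipulation of the partitions $\lambda(\cdot\,;n)$, using the combinatorial formula for $j$-induction in classical types (Lusztig's symbols) and inducting on the rank. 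For the exceptional types $G_2,F_4,E_6,E_7,E_8$ the upper bound is a finite verification: one runs over the finitely many vertices $y$, computes $j_{W(\Phi_y^{Q,n})}^{W(\Phi_y)}\sgn$ and its Springer orbit from the standard tables, saturates, and compares with the value of \cite{glt}.

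For the matching lower bound it suffices, given the upper bound, to exhibit one vertex $y_\nu$ with $\mathrm{Sat}_{\mathbb G_{y_\nu}}^{\mathbb G}(\underline{\mathcal O}_{y_\nu})\ge\mathcal O_{\mathrm{Spr}}(j_{W_\nu}^W\sgn)(\mathbb C)$ (which the upper bound then forces to be an equality). Here I would take $y_\nu$ to be a vertex near the rational point $y_0\in Y^{sc}\otimes\mathbb Q$ characterized by $B_Q(y_0,\alpha^\vee)=n\langle\nu,\alpha^\vee\rangle$ for all $\alpha$ --- $y_0$ exists by nondegeneracy of $B_Q$ on $Y^{sc}\otimes\mathbb Q$, as in the proof of Lemma \ref{SimpTorChar} --- so that $\Phi_{y_\nu}$ has full rank and $W(\Phi_{y_\nu}^{Q,n})=W_\nu$, and then check type by type, again via \cite{glt}, that the $j$-induced orbit and its saturation recover the target. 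The main obstacle is the upper bound: proving the dominance-order inequalities for the partitions $\lambda(\cdot\,;n)$ under $j$-induction in the classical types, and carrying out the finite but lengthy case analysis in the exceptional types; the reductions of the first two paragraphs are essentially bookkeeping.
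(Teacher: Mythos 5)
Your overall strategy (reduce to $\Phi$ irreducible, realize the right-hand side at a hyperspecial vertex, bound every other vertex by explicit partition combinatorics in classical types and by finite tables in exceptional types) is the same as the paper's, but the key step of your upper bound has a genuine gap at the non-hyperspecial vertices. When $\Phi_y$ has two simple factors (e.g.\ $D_s\times D_{r-s}$ inside $D_r$, or $C_s\times C_{r-s}$ inside $C_r$), you propose to restrict $y$ to each factor, bound each factor's Springer orbit by the rank-$s$ and rank-$(r-s)$ answers, and then prove a dominance inequality for the collapsed union of the two \emph{independent} maxima, such as $j_{W(C_k)\times W(C_{r-k})}^{W(C_r)}\bigl(\lambda(2k;n)_C\boxtimes\lambda(2(r-k);n)_C\bigr)\le\lambda(2r;n)_C$. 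For $n$ odd this works, but for $n$ even (types $B$, $C$, $D$) the factor-wise maxima contain an exceptional large part ($n+1$, resp.\ $n/2+1$ when $n/2$ is odd in type $C$), and if both factors attained it the collapsed union would have two such parts, which is \emph{not} dominated by the target $(n+1,\lambda(2r-n-1;n))_D$, resp.\ $(n/2+1,\lambda(2r-n/2-1;n/2))_C$; so the inequalities you plan to prove by partition manipulation are false as stated, and no manipulation of the $\lambda(\cdot\,;n)$'s will rescue them. The point you are missing is that the two factors are cut out by the \emph{same} $y\in v+P$, and this imposes a parity correlation between them: in the paper's argument one observes, e.g., that $(-\widetilde\alpha+\alpha_1+\alpha_{r-1}+\alpha_r)(v+y)\equiv 1 \bmod 2$ in type $D$ (and $(-\widetilde\alpha+\alpha_r)(v+y)\equiv 1\bmod 2$ in type $C$), which forbids both factors from simultaneously reaching the exceptional part; only with this joint constraint does the saturation (computed there via weighted Dynkin diagrams, equivalently the collapsed union of the two partitions) stay below the target. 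Your ``factor-by-factor, induct on rank'' reduction discards exactly this information. A related caution: the displayed quantity is $\mathrm{Sat}_{\mathbb G_y}^{\mathbb G}$ of the Springer orbit, not the Springer orbit of a $j$-induction to $W$, and for pseudo-Levi subgroups these need not coincide, so if you intend the inequality literally as a statement about $j$-induction you also owe a justification of that replacement.

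A secondary issue is your lower bound. The point $y_0$ defined by $B_Q(y_0,\alpha^\vee)=n\langle\nu,\alpha^\vee\rangle$ satisfies $Q(\alpha^\vee)\alpha(y_0)=n\langle\nu,\alpha^\vee\rangle$, but $\alpha(y_0)=\gcd(n,Q(\alpha^\vee))/Q(\alpha^\vee)$ on simple roots, which is not an integer whenever $Q(\alpha^\vee)\nmid n$ (e.g.\ the long root of $C_r$ with $n$ odd); so $y_0$ is not a special vertex and ``a vertex near $y_0$ with $\Phi_{y_\nu}$ of full rank'' is not enough --- full rank of a proper pseudo-Levi does not make the saturation trivial, and then the value at $y_\nu$ need not be $\ge \mca{O}_{\rm Spr}(j_{W_\nu}^W\sgn)(\mathbb C)$. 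What is needed, and what the paper does, is to choose $y=\sum_\alpha a_\alpha\omega_{\alpha^\vee}$ in the coweight lattice with $a_\alpha Q(\alpha^\vee)/\gcd(n,Q(\alpha^\vee))\equiv 1\bmod n_\alpha$ (possible since these two numbers are coprime), which forces $\Phi_y=\Phi$, hence $\mathbb G_y=\mathbb G$ and trivial saturation, and $\Phi_y^{Q,n}=\Phi_\nu$ by the computation of Lemma \ref{lm:Wx_Wy}. This part of your sketch is fixable, but as written it does not produce the required vertex.
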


Our main theorem is an immediate consequence of Propositions \ref{WFThetaRedtoComb} and \ref{prop:combo}.

\begin{theorem}\label{WFThetaThm} 
Suppose that hypotheses \ref{hyp:exp}, \ref{hyp:lift} and \ref{hyp:bilin} hold.
%This is the case if the residue characteristic is larger than some constant depending on the absolute root datum of $\mathbf G$. 

Fix $\pi^{\dagger}$ a distinguished genuine irreducible $\overline{T}$-representation and let $\nu:T\rightarrow \mathbb{C}^{\times}$ be an exceptional character. Let $\Theta$ be the theta representation of $\overline{G}$ attached to $\pi^{\dagger}$ and $\nu$. Then
\begin{equation}\label{WFThetaFinal}
\mathrm{WF}^{s}(\Theta)(\mathbb C)=\mathcal{O}_{\mathrm{Spr}}(j_{W_{\nu}}^{W}(\mathrm{sgn}))(\mathbb C).
\end{equation}
\end{theorem}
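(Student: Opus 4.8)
The plan is to deduce Theorem~\ref{WFThetaThm} by combining the two main results already established in the excerpt: Proposition~\ref{WFThetaRedtoComb}, which reduces the stable wave front set to the maximum of saturated Springer supports over vertices of the apartment, and Proposition~\ref{prop:combo}, which evaluates that maximum combinatorially as $\mathcal{O}_{\mathrm{Spr}}(j_{W_\nu}^W(\mathrm{sgn}))(\mathbb{C})$. Since both statements have already been proved (Proposition~\ref{prop:combo} being the content of Section~\ref{WFSComp}, which in turn rests on the case-by-case analysis referenced there), the theorem itself is \emph{immediate}: chaining the two displayed equations gives
\begin{equation*}
\mathrm{WF}^{s}(\Theta)(\mathbb C)=\max_{y\in\mathscr V}\mathrm{Sat}_{\mathbb G_y}^{\mathbb G}\bigl(\mathcal O_{\mathrm{Spr}}(j_{W(\Phi_y^{Q,n})}^{W(\Phi_y)}\mathrm{sgn})(\mathbb C)\bigr)=\mathcal O_{\mathrm{Spr}}(j_{W_\nu}^W(\mathrm{sgn}))(\mathbb C).
\end{equation*}
So the proof body is essentially one line: ``This follows at once from Propositions~\ref{WFThetaRedtoComb} and~\ref{prop:combo}.'' The only thing worth spelling out is that the hypotheses \ref{hyp:exp}, \ref{hyp:lift}, \ref{hyp:bilin} assumed in the theorem are exactly those needed to invoke Proposition~\ref{WFThetaRedtoComb} (which in turn invokes Theorem~\ref{StableWFS} and Corollary~\ref{ParaKWFS}), so no further conditions are required.

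If one instead wanted to reconstruct the whole argument from scratch, the steps would be, in order: (1) define $\Theta$ as the Langlands quotient of $\mathrm{Ind}_{\overline B}^{\overline G}(\pi^\dagger\otimes\delta_\nu)$ and establish Proposition~\ref{ThetaBasics}, especially item~\eqref{TProp6} that $\Theta^{(\overline I,\chi)}$ carries the index character of $\mathscr{H}_\chi$ --- this uses the Iwahori--Matsumoto presentation of Theorem~\ref{IMPres} and a length computation via Corollary~\ref{TrivOnModCoRoots}; (2) pass to parahorics and prove Proposition~\ref{ThetaPmod}, describing $\Theta^{P^+}$ as a multiplicity-free sum $\oplus_{\mathcal O}\sigma_{\mathcal O}$ indexed by $W_P$-orbits in $\mathscr Z$; (3) use the finite-group results of Subsection~\ref{KWFSGen} --- Proposition~\ref{prop:semisimple} identifying the index-character constituent as a semisimple character, and Corollary~\ref{KWFS} computing its Kawanaka wave front set as $\mathcal O_{\mathrm{Spr}}(j_{W_\chi}^W(\mathrm{sgn}))$ via Taylor's work --- together with Proposition~\ref{ThetaParaHecke} to get Corollary~\ref{ParaKWFS}; (4) feed this into Okada's theorem for tame covers, Theorem~\ref{StableWFS}, and simplify the root systems $\Phi_{y,\chi}$ to the explicit $\Phi_y^{Q,n}$ using Lemma~\ref{SimpRootSys}, obtaining Proposition~\ref{WFThetaRedtoComb}; (5) finally carry out the type-by-type combinatorial verification of Proposition~\ref{prop:combo}.

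The genuinely hard part --- the one place where real work beyond bookkeeping is needed --- is step~(5), the combinatorial identity of Proposition~\ref{prop:combo}. One inequality, that the left side is $\le$ the right side, should follow from general $j$-induction monotonicity together with the fact that each $\Phi_y^{Q,n}$ is built from the exceptional-character data; the reverse inequality, that the maximum is actually \emph{attained}, requires exhibiting a vertex $y\in\mathscr V$ for which $\mathrm{Sat}_{\mathbb G_y}^{\mathbb G}(\mathcal O_{\mathrm{Spr}}(j_{W(\Phi_y^{Q,n})}^{W(\Phi_y)}\mathrm{sgn})(\mathbb C))$ equals $\mathcal O_{\mathrm{Spr}}(j_{W_\nu}^W(\mathrm{sgn}))(\mathbb C)$ on the nose, and this demands an explicit choice in each Cartan type and a check that the saturation (the DeBacker lift, in the guise of $\mathrm{Sat}$ from Lemma~\ref{lem:lifting}) does not enlarge the orbit further. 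But for the purposes of Theorem~\ref{WFThetaThm} as stated, all of this is already available, and the proof is simply the concatenation of Propositions~\ref{WFThetaRedtoComb} and~\ref{prop:combo}.
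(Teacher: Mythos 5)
Your proposal matches the paper exactly: the paper states that Theorem \ref{WFThetaThm} is an immediate consequence of Propositions \ref{WFThetaRedtoComb} and \ref{prop:combo}, which is precisely your one-line concatenation, and your remark that the hypotheses \ref{hyp:exp}, \ref{hyp:lift}, \ref{hyp:bilin} are exactly those feeding into Proposition \ref{WFThetaRedtoComb} is also how the paper organizes things. Your sketch of the supporting chain (Propositions \ref{ThetaBasics}, \ref{ThetaPmod}, Corollaries \ref{KWFS}, \ref{ParaKWFS}, Theorem \ref{StableWFS}, and the case-by-case work of Section \ref{WFSComp}) is an accurate summary of how those two propositions are themselves established.
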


We start with a reduction lemma.
\begin{lemma}
    Proposition \ref{prop:combo} holds if it holds whenever $\Phi$ is irreducible and $Q(\alpha^\vee) = 1$ on the short coroots.
\end{lemma}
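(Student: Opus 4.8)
The plan is to reduce Proposition \ref{prop:combo} to the case where $\Phi$ is irreducible and $Q$ is normalized so that $Q(\alpha^\vee) = 1$ for every short coroot $\alpha^\vee$. There are two independent reductions to carry out: a \emph{reduction to irreducible $\Phi$}, and a \emph{rescaling of $Q$}. I would do the second one first, since it is purely a matter of unwinding definitions.

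For the rescaling: recall that $Q$ enters every relevant object only through the integers $Q(\alpha^\vee)$ and through the lattice $Y_{Q,n}$, which in turn is governed by $B_Q$ and hence by the $Q(\alpha^\vee)$. On each irreducible component, $B_Q = m\,(-\mid-)$ for the normalized form $(-\mid-)$ with $(\alpha^\vee\mid\alpha^\vee)=1$ on short coroots (as in the proof of Lemma \ref{SimpTorChar}), so $Q(\alpha^\vee) = \tfrac{m}{2}(\alpha^\vee\mid\alpha^\vee)$ with $m$ even. Writing $m = 2m'$, the pair $(Q,n)$ and the pair $(Q/m', n)$ — or more precisely $(Q_0, n)$ where $Q_0(\alpha^\vee) := \tfrac12(\alpha^\vee\mid\alpha^\vee)$ takes the value $1$ on short coroots — produce \emph{the same} root system $\Phi_y^{Q,n}$ for every $y$ after replacing $y$ by a rescaled coweight and $n$ by $n/\gcd(n,m')$ (or by tracking how $n\mathbb Z$ interacts with the common factor $m'$). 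I would state this as: the map $y\mapsto m' y$ on the apartment, together with $n \mapsto n m'$, identifies the indexing set $\mathscr V$ and all the subroot systems $\Phi_y^{Q,n}$, $\Phi_y$ on the left-hand side of \eqref{CombComp}, and simultaneously identifies $W_\nu$ on the right-hand side since $\nu$ exceptional for $(Q,n)$ means $\nu(n_\alpha \alpha^\vee) = 1$ for $\alpha\in\Delta$, and $n_\alpha$ transforms correspondingly. Hence both sides of \eqref{CombComp} are unchanged, and we may assume $Q(\alpha^\vee)=1$ on short coroots from the outset.

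For the reduction to irreducible $\Phi$: if $\Phi = \Phi_1 \sqcup \cdots \sqcup \Phi_k$ is the decomposition into irreducible components, then $W = W_1 \times \cdots \times W_k$, the apartment and its vertices factor as $\mathscr V = \prod_i \mathscr V_i$, every subroot system appearing (namely $\Phi_y$ and $\Phi_y^{Q,n}$) decomposes compatibly as $\prod_i (\Phi_i)_{y_i}$ and $\prod_i (\Phi_i)_{y_i}^{Q,n}$, and $j$-induction, the Springer correspondence, and the nilpotent cone all factor as external products over the components. Likewise $\nu$ exceptional decomposes as a tuple of exceptional characters $\nu_i$, and $W_\nu = \prod_i (W_i)_{\nu_i}$. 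The saturation map $\mathrm{Sat}_{\mathbb G_y}^{\mathbb G}$ also respects this product structure because $\mathbb G_y = \prod_i (\mathbb G_i)_{y_i}$ (the centralizer of a semisimple element decomposes along simple factors). Therefore $\max_{y\in\mathscr V}(\cdots)$ on the left of \eqref{CombComp} equals the external product over $i$ of $\max_{y_i \in \mathscr V_i}(\cdots)$, and similarly the right-hand side is the external product of the $\mathcal O_{\mathrm{Spr}}(j_{(W_i)_{\nu_i}}^{W_i}\sgn)$; so the equality for $\Phi$ follows from the equality for each $\Phi_i$. The one point requiring a line of care is that taking a maximum of a product of posets equals the product of the maxima — true here because the closure order on nilpotent orbits of a product is the product order, and each factor $\max_{y_i}$ is attained (the index set $\mathscr V_i$ may be infinite, but the set of root subsystems $\Phi_{y_i}^{Q,n}$ that arise is finite, so only finitely many orbits occur).

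I expect the rescaling step to be the main obstacle: one must check carefully that the interaction of the scalar $m'$ with the divisibility condition defining $Y_{Q,n}$ and with $n_\alpha = n/\gcd(n,Q(\alpha^\vee))$ really does leave the sets $\Phi_y^{Q,n}$ invariant under the combined substitution, rather than merely up to a harmless-looking but actually lossy change. In particular one should verify that ``$\nu$ exceptional'' is preserved, since Definition \ref{def:exceptional} refers to $\alpha_{Q,n}^\vee = n_\alpha \alpha^\vee$ and the value of $n_\alpha$ changes under rescaling; the cleanest route is to observe that exceptionality is equivalent to $\nu$ being (a translate of) $\rho$ for the \emph{modified} root datum, a description manifestly compatible with the substitution. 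Everything else — the component-wise factorization — is formal once the bookkeeping is set up.
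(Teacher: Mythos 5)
Your overall architecture matches the paper's: reduce to irreducible $\Phi$ by observing that every ingredient of \eqref{CombComp} (the subsystems $\Phi_y$ and $\Phi_y^{Q,n}$, $j$-induction, the Springer correspondence, saturation, and $W_\nu$) distributes over products, and then, for irreducible $\Phi$, write $Q=l\tilde Q$ with $\tilde Q$ the normalized form taking value $1$ on short coroots and pass to $(\tilde Q,\,n')$ with $n'=n/\gcd(l,n)$. The product step is fine as you present it.

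The gap is in the rescaling step, and it is precisely the step you flag as ``the main obstacle'' and leave unverified. The mechanism you actually propose --- ``the map $y\mapsto m'y$ on the apartment, together with $n\mapsto nm'$'' --- does not do what you claim: scaling by an integer is not a bijection of $\mathscr V$ (it maps vertices into, not onto, the vertex set), and it changes the ambient finite root system, since $\Phi_{m'y}=\{\alpha\in\Phi: m'\alpha(y)\in\mathbb Z\}$ is in general strictly larger than $\Phi_y$; hence $W(\Phi_y)$, $\mathbb G_y$ and the saturation are all altered, and the divisibility condition $Q(\alpha^\vee)\alpha(m'y)\in nm'\mathbb Z$ is not equivalent to the original one. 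In fact no rescaling of $y$ is needed at all: since membership in $\Phi_y^{Q,n}$ already forces $\alpha(y)\in\mathbb Z$, one has for $\alpha\in\Phi_y$ the elementary equivalence $l\,\tilde Q(\alpha^\vee)\alpha(y)\in n\mathbb Z$ if and only if $\tilde Q(\alpha^\vee)\alpha(y)\in n'\mathbb Z$ with $n'=n/\gcd(l,n)$ (because $l/\gcd(l,n)$ is coprime to $n'$), so $\Phi_y^{Q,n}=\Phi_y^{\tilde Q,n'}$ for the \emph{same} $y$; the same computation gives $\alpha^\vee_{Q,n}=\alpha^\vee_{\tilde Q,n'}$, so the same $\nu$ is exceptional for $(\tilde Q,n')$ and $W_\nu$ is unchanged, making both sides of \eqref{CombComp} literally identical for $(Q,n)$ and $(\tilde Q,n')$. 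Your hedged alternative (``tracking how $n\mathbb Z$ interacts with the common factor $m'$'', with $n\mapsto n/\gcd(n,m')$) points in the right direction, but that verification is the entire non-formal content of the lemma; leaving it unchecked while offering an incorrect substitution in its place is a genuine gap.
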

\begin{proof}
    All of the operations distribute over products so Proposition \ref{prop:combo} holds if it holds whenever $\Phi$ is irreducible.

    Now suppose $\Phi$ is irreducible. 
    Let $\tilde Q$ denote the $W$-invariant quadratic form that takes value $1$ on the short coroots.
    Then since $\Phi$ is irreducible $Q = l \tilde Q$ where $l$ is the value of $Q$ on the short coroots.
    Let $n':=n/\mathrm{GCD}(l,n)$.
    The condition $Q(\alpha^\vee)\alpha(y)\in n\mathbb Z$ is equivalent to $\tilde Q(\alpha^\vee)\alpha(y)\in n'\mathbb Z$ and so
    $$\Phi_y^{Q,n} = \Phi_y^{\tilde Q,n'}.$$
    Moreover 
    $$\alpha^\vee_{Q,n} = \alpha^\vee_{\tilde Q,n'}$$
    and so $\nu$ is also exceptional with respect to $\tilde Q$ and $n'$.
    Using that Proposition \ref{prop:combo} holds for $\tilde Q$ we get that
    $$\max_{y\in \mathscr{V}}\mathrm{Sat}_{\mathbb G_y}^{\mathbb G}(\mathcal O_{\rm Spr}(j_{W(\Phi_y^{Q,n})}^{W(\Phi_y)}\sgn)(\mathbb C)) = \mca{O}_{\rm Spr}(j_{W_{\nu}}^W \sgn)(\mathbb C)$$
    as desired.
\end{proof}

%Notice that $Q(\alpha^{\vee})\alpha(y) \in n\Z$ is equivalent to $\alpha(y) \in n_{\alpha}\Z$. Then if we fix the quadratic form $Q$ such that $Q$ takes value 1 on the short coroots and let $n$ run over $\Z$, it exhausts all cases. We also observe that the computation for each irreducible factor of $\Phi$ is isolated. 
We are thus reduced to prove the case when $\Phi$ is irreducible and $Q(\alpha^\vee)=1$ on the short coroots.
For the rest of this section we will assume that these conditions hold.

\begin{definition}
    Let $ P $ be the coweight lattice of the root system $\Phi$. For $ \alpha \in \Delta$, let $ \omega_{\alpha} $ be the fundamental weight and $ \omega_{\alpha^{\vee}}$ be the fundamental coweight associated with $ \alpha $. 
\end{definition}

We start by showing that the right hand side of line \eqref{CombComp} is attained.

\begin{lemma} \label{lm:Wx_Wy}
	Let $ y=\sum_{\alpha \in \Delta}a_{\alpha} \omega_{\alpha^{\vee}} \in P $.
 If $ x=\sum_{\alpha \in \Delta}\frac{a_{\alpha}Q(\alpha^{\vee})\omega_{\alpha}}{n} $ then $ \Phi_y^{Q,n}=\Phi_x. $
\end{lemma}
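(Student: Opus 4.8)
The plan is to rewrite both root systems as the set of roots $\beta$ for which one fixed rational weight pairs integrally with $\beta^\vee$, and then to check that these two integrality conditions coincide.

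First I would note that $y=\sum_{\alpha\in\Delta}a_\alpha\omega_{\alpha^\vee}$ lies in the coweight lattice $P$, so for every simple root $\gamma$ we have $\gamma(y)=\langle\gamma,y\rangle=a_\gamma\in\mathbb Z$, and hence $\beta(y)\in\mathbb Z$ for every $\beta\in\Phi$. Thus $\Phi_y=\Phi$, and $\Phi_y^{Q,n}=\{\beta\in\Phi:\ Q(\beta^\vee)\beta(y)\in n\mathbb Z\}=\{\beta\in\Phi:\ B_Q(y,\beta^\vee)\in n\mathbb Z\}$, the last equality being Lemma~\ref{BQtoCP}.

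Next I would reinterpret the functional $z\mapsto B_Q(y,z)$ on $Y^{sc}$ as a weight. Since $B_Q(y,\gamma^\vee)=Q(\gamma^\vee)\gamma(y)=Q(\gamma^\vee)a_\gamma\in\mathbb Z$ for all $\gamma\in\Delta$, this functional defines an element $\lambda_y\in X^{sc}$. Comparing values on the basis of simple coroots and using $\langle\omega_\alpha,\gamma^\vee\rangle=\delta_{\alpha\gamma}$ gives $\lambda_y(\gamma^\vee)=Q(\gamma^\vee)a_\gamma=n\langle x,\gamma^\vee\rangle$ for every $\gamma\in\Delta$, whence $\lambda_y=nx$ in $X^{sc}\otimes\mathbb Q$. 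Writing an arbitrary $\beta^\vee$ as a $\mathbb Z$-combination of simple coroots and using bilinearity of $B_Q$ then yields $B_Q(y,\beta^\vee)=n\langle x,\beta^\vee\rangle$ for every $\beta\in\Phi$. Consequently $B_Q(y,\beta^\vee)\in n\mathbb Z$ if and only if $\langle x,\beta^\vee\rangle\in\mathbb Z$, that is, $\Phi_y^{Q,n}=\Phi_x$, which is the assertion.

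This is a short, purely formal computation; the only thing to keep straight is which lattice each object lives in — the coweight lattice $P$ for $y$, and $X^{sc}\otimes\mathbb Q$ for $x$ — together with the convention that, for a rational weight $x$, one has $\Phi_x=\{\beta\in\Phi:\langle x,\beta^\vee\rangle\in\mathbb Z\}$. I do not expect any genuine obstacle in this lemma; the substantive content of the section lies in the combinatorial Proposition~\ref{prop:combo}, which this lemma merely feeds.
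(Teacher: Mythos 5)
Your proof is correct and takes essentially the same route as the paper's: both arguments reduce to comparing the pairing of $\beta$ (resp.\ $\beta^\vee$) against $y$ (resp.\ $x$) by checking on simple (co)roots, and your packaging via the exact identity $B_Q(y,\beta^\vee)=n\langle x,\beta^\vee\rangle$ simply replaces the paper's term-by-term expansion of $\beta$ and its coprimality observation, making the integrality equivalence immediate. The only point worth adding is that Lemma \ref{BQtoCP} is stated for $y\in Y$, so you should remark that it extends to $y\in P\subset Y^{sc}\otimes\mathbb{Q}$ by $\mathbb{Q}$-bilinearity of $B_Q$ — a one-line fix.
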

\begin{proof}
	Assume $ \beta^{\vee}=\sum_{\alpha \in \Delta}b_{\alpha} \alpha^{\vee} $. Then $ \beta=\sum_{\alpha \in \Phi}\frac{b_{\alpha}Q(\alpha^{\vee})\alpha}{Q(\beta^{\vee})} $, since the length ratio between $ \alpha^{\vee} $ and $ \beta^{\vee} $ is equal to $ Q(\alpha^{\vee})/Q(\beta^{\vee}) $.
	
	We have $$ \langle y,\beta/n_{\beta} \rangle =\sum_{\alpha \in \Delta}\frac{a_{\alpha}b_{\alpha}Q(\alpha^{\vee})}{Q(\beta^{\vee})}\cdot\frac{\text{gcd}(n,Q(\beta^{\vee}))}{n} $$and
	$$ \langle \beta^{\vee},x \rangle=\sum_{\alpha \in \Delta}\frac{a_{\alpha}b_{\alpha}Q(\alpha^{\vee})}{Q(\beta^{\vee})}\cdot\frac{Q(\beta^{\vee})}{n}.  $$
	
	Since $ \frac{b_{\alpha}Q(\alpha^{\vee})}{Q(\beta^{\vee})}$ is an integer for each $ \alpha $, it is clear that $  \langle y,\beta/n_{\beta} \rangle \in \Z$ if and only if $ \langle \beta^{\vee},x \rangle \in \Z $. Thus $ \Phi_y^{Q,n}=\Phi_x. $ 
\end{proof}
\begin{corollary}
    There exists a $y\in \mathscr V$ such that 
    $$W(\Phi_y) = W, \quad W(\Phi_y^{Q,n}) = W_\nu.$$
\end{corollary}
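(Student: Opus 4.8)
The plan is to exhibit an explicit vertex $y$ in the coweight lattice $P$ and invoke Lemma \ref{lm:Wx_Wy} to convert the condition on $\Phi_y^{Q,n}$ into an integrality condition matching $\nu$. Recall that $W_\nu=W(\Phi_\nu)$ is the reflection subgroup attached to the integral subsystem $\Phi_\nu:=\{\alpha\in\Phi:\langle\nu,\alpha^\vee\rangle\in\mathbb{Z}\}$ (this is the reflection subgroup appearing in Theorem \ref{WFThetaThm}), and that $\nu$ being exceptional means $\langle\nu,\alpha_{Q,n}^\vee\rangle=1$, i.e. $\langle\nu,\alpha^\vee\rangle=1/n_\alpha=\gcd(n,Q(\alpha^\vee))/n$ for every $\alpha\in\Delta$.

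First I would, for each $\alpha\in\Delta$, set $d_\alpha:=\gcd(n,Q(\alpha^\vee))$ and pick an integer $a_\alpha$ solving $a_\alpha\,Q(\alpha^\vee)\equiv d_\alpha\pmod n$; concretely, writing $d_\alpha=un+vQ(\alpha^\vee)$ by B\'ezout, one may take $a_\alpha=v$, since then $a_\alpha Q(\alpha^\vee)=d_\alpha-un\equiv d_\alpha\pmod n$. Put $y:=\sum_{\alpha\in\Delta}a_\alpha\,\omega_{\alpha^\vee}\in P$. Because $y$ lies in the coweight lattice, every hyperplane through $y$ is a translate of one through $0$, so $y$ is a (special) vertex of the apartment and hence $y\in\mathscr{V}$; moreover $\langle\alpha,y\rangle\in\mathbb{Z}$ for all $\alpha\in\Phi$, so $\Phi_y=\Phi$ and $W(\Phi_y)=W$, which is the first required identity.

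Next I would apply Lemma \ref{lm:Wx_Wy} to this $y$: it yields $\Phi_y^{Q,n}=\Phi_x$ for $x=\sum_{\alpha\in\Delta}\tfrac{a_\alpha Q(\alpha^\vee)}{n}\,\omega_\alpha\in X\otimes\mathbb{Q}$, where $\Phi_x=\{\beta\in\Phi:\langle\beta^\vee,x\rangle\in\mathbb{Z}\}$. Since $\langle x,\alpha^\vee\rangle=\tfrac{a_\alpha Q(\alpha^\vee)}{n}$ for $\alpha\in\Delta$, the congruence chosen for $a_\alpha$ says precisely that $\langle x-\nu,\alpha^\vee\rangle\in\mathbb{Z}$ for every $\alpha\in\Delta$. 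As the simple coroots form a $\mathbb{Z}$-basis of $Y^{sc}$, which contains $\Phi^\vee$, this forces $\langle x-\nu,\beta^\vee\rangle\in\mathbb{Z}$ for all $\beta\in\Phi$, whence $\langle x,\beta^\vee\rangle\in\mathbb{Z}\iff\langle\nu,\beta^\vee\rangle\in\mathbb{Z}$, i.e. $\Phi_x=\Phi_\nu$. Therefore $W(\Phi_y^{Q,n})=W(\Phi_\nu)=W_\nu$, completing the argument.

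I do not expect a genuine obstacle here, since Lemma \ref{lm:Wx_Wy} already carries the structural content; what remains is elementary. The points deserving a line of care are the solvability of the defining congruence for $a_\alpha$ (pure B\'ezout, using $d_\alpha=\gcd(n,Q(\alpha^\vee))$), the standard fact that every coweight-lattice point is a vertex of the apartment, and the reduction-modulo-integral-weights step identifying $\Phi_x$ with $\Phi_\nu$ (equivalently, $x\equiv\nu$ in $(X/\mathbb{Z}\Phi)\otimes(\tfrac{1}{n}\mathbb{Z}/\mathbb{Z})$). If the ambient definition of $W_\nu$ is phrased differently from $W(\Phi_\nu)$, one extra sentence recalling their coincidence would be needed.
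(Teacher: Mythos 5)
Your proof is correct and follows essentially the same route as the paper: you choose $y=\sum_{\alpha\in\Delta}a_\alpha\omega_{\alpha^\vee}\in P$ with $a_\alpha Q(\alpha^\vee)\equiv\gcd(n,Q(\alpha^\vee))\pmod n$ (equivalent to the paper's condition $a_\alpha Q(\alpha^\vee)/\gcd(n,Q(\alpha^\vee))\equiv 1\pmod{n_\alpha}$, since $n=n_\alpha\gcd(n,Q(\alpha^\vee))$), note $\Phi_y=\Phi$ because $y\in P$, and invoke Lemma \ref{lm:Wx_Wy} to identify $\Phi_y^{Q,n}$ with $\Phi_x=\Phi_\nu$. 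The only difference is that you spell out the step $\Phi_x=\Phi_\nu$ (via $x-\nu$ pairing integrally with the simple coroots) which the paper leaves implicit; this is a welcome clarification rather than a deviation.
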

\begin{proof}
    Let $ a_{\alpha} \in \Z $ be such that $ \frac{a_{\alpha}Q(\alpha^{\vee}) }{\text{gcd}(n,Q(\alpha^{\vee}))}\equiv 1 \text{ mod } n_{\alpha}$ and let $ y=\sum_{\alpha \in \Delta}a_{\alpha} \omega_{\alpha^{\vee}} \in P $.
    Since $y\in P$, we have that $W(\Phi_y) = W$.
    By Lemma \ref{lm:Wx_Wy}, we see that $ \Phi_y^{Q,n}=\Phi_{\nu} $.
\end{proof}

Now we prove that the left hand side of \eqref{CombComp} is bounded above by the right hand side.

\begin{lemma}\label{lem:max-reduction}
    $$\max_{y\in \mathscr{V}}\mathrm{Sat}_{\mathbb G_y}^{\mathbb G}(\mathcal O_{\rm Spr}(j_{W(\Phi_y^{Q,n})}^{W(\Phi_y)}\sgn)(\mathbb C)) = \max_{y\in (\mathscr{V}\cap \overline{\mathscr A}_0)+P}\mathrm{Sat}_{\mathbb G_y}^{\mathbb G}(\mathcal O_{\rm Spr}(j_{W(\Phi_y^{Q,n})}^{W(\Phi_y)}\sgn)(\mathbb C)).$$
\end{lemma}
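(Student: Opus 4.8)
The plan is to deduce the stated identity from a $W$-invariance property of the assignment
\[
F(y)\;:=\;\mathrm{Sat}_{\mathbb G_y}^{\mathbb G}\bigl(\mathcal{O}_{\mathrm{Spr}}(j_{W(\Phi_y^{Q,n})}^{W(\Phi_y)}\sgn)(\mathbb C)\bigr),\qquad y\in Y\otimes\mathbb R,
\]
together with the fact that $\overline{\mathscr A}_0$ is a closed fundamental domain for $W_{\mathrm{af}}=W\ltimes Y^{sc}$ acting on the apartment. One inequality is immediate: translation by a coweight $\lambda\in P$ sends the affine reflection hyperplane $H_{\alpha+k}$ to $H_{\alpha+k-\langle\alpha,\lambda\rangle}$, which is again an affine reflection hyperplane because $\langle\alpha,\lambda\rangle\in\mathbb Z$, so $P$ permutes the affine Weyl arrangement and hence its vertex set $\mathscr V$. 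Thus $(\mathscr V\cap\overline{\mathscr A}_0)+P\subseteq\mathscr V$, so the right-hand maximum is taken over a subset of $\mathscr V$, giving $\le$.

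For the reverse inequality I would first establish that $F$ is constant on $W$-orbits. Since $Q$ is $W$-invariant, the conditions $\alpha(y)\in\mathbb Z$ and $Q(\alpha^\vee)\alpha(y)\in n\mathbb Z$ are preserved under $(\alpha,y)\mapsto(\sigma\alpha,\sigma y)$ for $\sigma\in W$, whence $\Phi_{\sigma y}=\sigma\Phi_y$ and $\Phi_{\sigma y}^{Q,n}=\sigma\Phi_y^{Q,n}$; consequently $(W(\Phi_{\sigma y}^{Q,n})\subseteq W(\Phi_{\sigma y}))$ is the $\sigma$-conjugate, inside $W$, of $(W(\Phi_y^{Q,n})\subseteq W(\Phi_y))$. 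Because $j$-induction and the sign character are built functorially out of the ambient Euclidean geometry, $j_{W(\Phi_{\sigma y}^{Q,n})}^{W(\Phi_{\sigma y})}\sgn$ is the $\sigma$-conjugate of $j_{W(\Phi_y^{Q,n})}^{W(\Phi_y)}\sgn$. Choosing the semisimple element $s$ of Lemma~\ref{lem:lifting} compatibly with the $W$-action (moving $y$ to $\sigma y$ corresponds to conjugating $s$ by a representative $\dot\sigma\in N_{\mathbb G}(\mathbf T(\mathbb C))$ of $\sigma$) we get $\mathbb G_{\sigma y}=\dot\sigma\,\mathbb G_y\,\dot\sigma^{-1}$; equivariance of the Springer correspondence then gives $\mathcal{O}_{\mathrm{Spr}}(\,\cdot\,)(\mathbb C)$ for $\sigma y$ equal to the $\mathrm{Ad}(\dot\sigma)$-image of the one for $y$, and finally $\mathrm{Sat}^{\mathbb G}$ absorbs this conjugation since $\dot\sigma\in\mathbb G$. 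Hence $F(\sigma y)=F(y)$.

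Given $y\in\mathscr V$, I would then pick $w\in W_{\mathrm{af}}$ with $z:=w^{-1}(y)\in\overline{\mathscr A}_0$ (possible since $\overline{\mathscr A}_0$ is a fundamental domain), noting $z\in\mathscr V$ because $w$ permutes $\mathscr V$. Writing $w=t_\lambda\sigma$ with $\lambda\in Y^{sc}$, $\sigma\in W$ (the unique decomposition in $W_{\mathrm{af}}=Y^{sc}\rtimes W$) and using that $W$ stabilises $Y^{sc}$, we have $\mu:=\sigma^{-1}\lambda\in Y^{sc}\subseteq P$ and $y=t_\lambda\sigma(z)=\sigma(z+\mu)$, so $z+\mu\in(\mathscr V\cap\overline{\mathscr A}_0)+P$ and $F(y)=F(z+\mu)$ by the $W$-invariance above. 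Taking the maximum over $y\in\mathscr V$ yields $\le$ in the other direction, completing the proof. The step I expect to demand the most care is the $W$-equivariance of $F$: every arrow in the chain $y\mapsto(\Phi_y^{Q,n}\subseteq\Phi_y)\mapsto j(\sgn)\mapsto\mathcal{O}_{\mathrm{Spr}}\mapsto\mathrm{Sat}^{\mathbb G}$ is visibly equivariant, but one must run this carefully through the dictionary of Lemma~\ref{lem:lifting} (the element $s$, the pseudo-Levi $\mathbb G_y=Z_{\mathbb G}(s)$, and the weighted-Dynkin-diagram comparisons) to confirm that conjugation by $\dot\sigma$ is the only discrepancy and that it disappears upon saturating to $\mathbb G$.
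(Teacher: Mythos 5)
Your proposal is correct and follows essentially the same route as the paper: the paper's proof also rests on the observation that the quantity is unchanged under $y\mapsto w(y+y_0)$ (it merely asserts this invariance, which you spell out step by step), combined with the fact that any vertex can be moved by $W$ together with a translation in $P$ into $\overline{\mathscr A}_0$, so that its $W$-translate already lies in $(\mathscr V\cap\overline{\mathscr A}_0)+P$. The only cosmetic difference is that the paper states the invariance for $W\ltimes Y^{sc}_{Q,n}$ (the extra translations being needed later, in Corollary \ref{cor:EquivRed}), whereas you correctly note that plain $W$-invariance suffices for this lemma.
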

\begin{proof}
    Let $ y \in \mathscr{V} $ and $ y^{\prime}=w(y+y_0) $ with $ w \in W,y_0 \in Y^{sc}_{Q,n}$. We observe that \begin{equation} \label{eq:wy_equality}
    	\mathrm{Sat}_{\mathbb G_{ y^{\prime}}}^{\mathbb G}(\mathcal O_{\rm Spr}(j_{W(\Phi_{y^{\prime}}^{Q,n})}^{W(\Phi_{ y^{\prime}})}\sgn)(\mathbb C))=\mathrm{Sat}_{\mathbb 
     G_y}^{\mathbb  G}(\mathcal O_{\rm Spr}(j_{W(\Phi_y^{Q,n})}^{W(\Phi_y)}\sgn)(\mathbb C)).
    \end{equation}
    Since for any $y\in \mathscr V$ there exists $ w \in W $ and $ p \in P $ such that $ wy+p \in \overline{\mathscr A}_0$, by \eqref{eq:wy_equality} we can just consider the vertices in $ (\mathscr{V} \cap \overline{\mathscr A}_0)+P$.    
\end{proof}

We now show that $\Phi_y$ and $\Phi_y^{Q,n}$ takes a very particular form when $y\in (\mathscr V\cap \overline{\mathscr A}_0)+P$.

\begin{lemma} \label{lem:weyl-lemma}
    Let $v\in \mathscr V\cap \overline{\mathscr A}_0$ and $ y=v+p $ for $p\in P$.
    \begin{enumerate}
        \item Let $ \widetilde{\alpha} $ be the highest root of $\Phi$ with respect to $ \Delta $. 
        The root system $ \Phi_v $ has a set of simple roots $ \Delta_v$ lying in the set $\Delta\cup\{-\tilde\alpha\}$. 
        It consists of those $\alpha\in \Delta$ that vanish on $v$ and includes $-\tilde\alpha$ if $\tilde\alpha(v) = 1$.
        \item Let 
        $$ \Phi^{\prime}_v=\{\alpha_{Q,n}:\alpha \in \Phi_v\},\quad \Delta^{\prime}_v=\{\alpha_{Q,n}:\alpha \in \Delta_v\},$$and $ \widetilde{\Delta}^{\prime}_v $ be the set of highest roots in $ \Phi^{\prime}_v $ with respect to $ \Delta^{\prime}_v $. Define 
        $$ \overline{\mathscr A}^{Q,n}_{v,0}:=\{y \in Y \otimes \R: \alpha(y)\ge 0 \text{ for } \alpha \in \Delta_{v} \text{ and } \tilde\beta(y)\le 1  \text{ for } \tilde\beta \in \widetilde{\Delta}^{\prime}_v \}.$$
        Then there exists $ w \in W(\Phi_v) $ and $ y_0 \in Y^{sc}_{Q,n} $ such that $ w(y+y_0) \in  \overline{\mathscr A}^{Q,n}_{v,0}$. 
        \item\label{weyl-lemma3} Let 
        $$ \overline{\Phi}^{Q,n}_{y}:=\{\alpha_{Q,n}:\alpha \in \Phi^{Q,n}_y\}.$$
        If $ y \in (v+P) \cap \overline{\mathscr A}^{Q,n}_{v,0}$, then $ \overline{\Phi}^{Q,n}_y $ has a set of simple roots 
        $$\overline{\Delta}^{Q,n}_y \subset \Delta^{\prime}_v \cup \{-\tilde\beta:\tilde\beta \in \widetilde{\Delta}^{\prime}_v\}$$
        containing $\beta\in \Delta_v'$ if $\beta(y) = 0$ and $-\tilde\beta$ for $\tilde\beta\in \tilde\Delta_v'$ if $\tilde\beta(y)=1$.
        Thus we can describe $\overline{\Delta}^{Q,n}_y$ using a subdiagram of the extended Dynkin diagram associated to $ \Delta^{\prime}_v $.
    \end{enumerate}
\end{lemma}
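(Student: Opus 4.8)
All three parts are instances of the standard combinatorics of the closed fundamental alcove, applied first to $\Phi$ and then to the componentwise rescaled root system $\Phi'_v$.

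For part (1), I would recall that the walls of $\overline{\mathscr{A}}_0$ are the hyperplanes $\{\alpha=0\}$ for $\alpha\in\Delta$ together with $\{\widetilde\alpha=1\}$, whose linear parts are $\Delta\cup\{-\widetilde\alpha\}$. By Subsection~\ref{ParaSubs}, the reflection group $W(\Phi_v)$ coincides with the stabiliser of $v$ in $W_{\mathrm{af}}$, and, as a general fact about a Coxeter group acting on its Coxeter complex with a chamber as fundamental domain \cite{Bour,IM}, this stabiliser is the standard parabolic subgroup generated by the reflections in those walls of $\overline{\mathscr{A}}_0$ that contain $v$. The wall $\{\alpha=0\}$ contains $v$ exactly when $\alpha(v)=0$, and $\{\widetilde\alpha=1\}$ contains $v$ exactly when $\widetilde\alpha(v)=1$; passing to linear parts identifies the corresponding set of simple roots for $\Phi_v$ with $\{\alpha\in\Delta:\alpha(v)=0\}\cup\{-\widetilde\alpha:\widetilde\alpha(v)=1\}\subseteq\Delta\cup\{-\widetilde\alpha\}$. (The vertex hypothesis on $v$ is not used; $v\in\overline{\mathscr{A}}_0$ suffices.)

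For part (2), note that $\Phi'_v$ is obtained from $\Phi_v$ by dilating each irreducible component, so it is a genuine root system with $W(\Phi'_v)=W(\Phi_v)$, and its affine Weyl group $W(\Phi'_v)\ltimes Q^{\vee}(\Phi'_v)$, where $Q^{\vee}(\Phi'_v):=\mathbb{Z}\langle \alpha_{Q,n}^{\vee}:\alpha\in\Phi_v\rangle$, acts on $Y\otimes\mathbb{R}$ with closed fundamental domain exactly $\overline{\mathscr{A}}^{Q,n}_{v,0}$; indeed $\alpha_{Q,n}(y)\ge 0$ is the same condition as $\alpha(y)\ge 0$, so this region is the standard closed alcove of $\Phi'_v$ cut out by $\Delta'_v$ and the highest roots $\widetilde{\Delta}'_v$. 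Since $Q^{\vee}(\Phi'_v)\subseteq Y^{sc}_{Q,n}$ and $W(\Phi_v)$ preserves $Q^{\vee}(\Phi'_v)$, writing the affine Weyl group element that carries $y$ into $\overline{\mathscr{A}}^{Q,n}_{v,0}$ as $t_\lambda w$ with $w\in W(\Phi_v)$ and $\lambda\in Q^{\vee}(\Phi'_v)$, and rewriting $t_\lambda w(y)=w(y+w^{-1}\lambda)$, gives the claim with $y_0:=w^{-1}\lambda\in Y^{sc}_{Q,n}$.

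For part (3), the key observation is that $\Phi_y=\Phi_v$: since $p\in P$ we have $\alpha(p)\in\mathbb{Z}$ for all $\alpha\in\Phi$, hence $\alpha(y)\equiv\alpha(v)\pmod{\mathbb{Z}}$, so $\alpha(y)\in\mathbb{Z}$ precisely when $\alpha\in\Phi_v$. Next I would check that for $\alpha\in\Phi_v$ the condition defining $\Phi^{Q,n}_y$ is equivalent to $\alpha_{Q,n}(y)\in\mathbb{Z}$: with $d_\alpha=\mathrm{GCD}(n,Q(\alpha^{\vee}))$, $n_\alpha=n/d_\alpha$, $m_\alpha=Q(\alpha^{\vee})/d_\alpha$ (so $\mathrm{GCD}(m_\alpha,n_\alpha)=1$), the relation $Q(\alpha^{\vee})\alpha(y)=d_\alpha m_\alpha\alpha(y)\in n\mathbb{Z}=d_\alpha n_\alpha\mathbb{Z}$ amounts to $n_\alpha\mid m_\alpha\alpha(y)$, hence, by coprimality and $\alpha(y)\in\mathbb{Z}$, to $n_\alpha\mid\alpha(y)$, i.e.\ to $\alpha_{Q,n}(y)=\alpha(y)/n_\alpha\in\mathbb{Z}$. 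Therefore $\overline{\Phi}^{Q,n}_y=\{\beta\in\Phi'_v:\beta(y)\in\mathbb{Z}\}$ is the finite root system attached to the point $y$ inside $\Phi'_v$; since $y\in\overline{\mathscr{A}}^{Q,n}_{v,0}$ lies in the closed fundamental alcove of $\Phi'_v$, running the argument of part (1) with $(\Phi'_v,\Delta'_v,\widetilde{\Delta}'_v)$ in place of $(\Phi,\Delta,\widetilde\alpha)$ produces the simple system $\overline{\Delta}^{Q,n}_y=\{\beta\in\Delta'_v:\beta(y)=0\}\cup\{-\widetilde\beta:\widetilde\beta\in\widetilde{\Delta}'_v,\ \widetilde\beta(y)=1\}$, which is visibly a subdiagram of the componentwise extended Dynkin diagram of $\Delta'_v$.

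The content is light and largely bookkeeping; the step needing the most care is the dictionary in parts (2)--(3) --- verifying that $\overline{\mathscr{A}}^{Q,n}_{v,0}$ is literally the closed fundamental alcove of the affine Weyl group of $\Phi'_v$, that $Q^{\vee}(\Phi'_v)\subseteq Y^{sc}_{Q,n}$, and that the rescaling $\alpha\mapsto\alpha_{Q,n}$ turns the congruence $Q(\alpha^{\vee})\alpha(y)\in n\mathbb{Z}$ into integrality $\alpha_{Q,n}(y)\in\mathbb{Z}$. Once this is set up, both (1) and (3) are the standard identification of point stabilisers in affine Weyl groups with standard parabolic subgroups.
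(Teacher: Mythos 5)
Your argument is correct and follows exactly the route the paper has in mind: the paper's proof of Lemma \ref{lem:weyl-lemma} is just the one-line assertion that these are standard consequences of affine Weyl group theory, and your write-up supplies precisely those standard details (stabilisers of points in the closed alcove are the standard parabolics generated by the wall reflections through the point, applied first to $\Phi$ and then to the rescaled system $\Phi'_v$ with coroot lattice inside $Y^{sc}_{Q,n}$, plus the gcd computation identifying $\Phi^{Q,n}_y$ with the integrality condition for $\alpha_{Q,n}$). No gaps; this matches the intended proof.
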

\begin{proof}
    These are straighforward consequences of basic properties of affine Weyl groups.
\end{proof}

\begin{definition}$ $
    \begin{enumerate}
        \item Let $ \overline{\mathscr A}^{Q,n}_0 $ denote $ \overline{\mathscr A}^{Q,n}_{0,0} $ and let $ \widetilde{\alpha}_{Q,n} $ denote the unique element in $ \widetilde{\Delta}^{\prime}_0 $.
        \item For vertices $v,v'\in \mathscr V\cap \mathscr A_0$ we say $v\sim v'$ if there exists $w \in W$ such that $wv-v'\in P$.
    \end{enumerate}
\end{definition}

\begin{corollary}\label{cor:EquivRed}
    %Then for $ y=v+p $, there exists $ w \in W(\Phi_v) $ and $ y_0 \in Y^{sc}_{Q,n} $ such that $ w(y+y_0) \in  \mathscr A^{Q,n}_{v,0}$. 
    %
    $$\max_{y\in \mathscr{V}}\mathrm{Sat}_{\mathbb G_y}^{\mathbb G}(\mathcal O_{\rm Spr}(j_{W(\Phi_y^{Q,n})}^{W(\Phi_y)}\sgn)(\mathbb C)) = \max_{\substack{v\in \mathscr V\cap \overline{\mathscr A}_0/\sim \\ y\in (v+P)\cap \overline{\mathscr A}_{v,0}^{Q,n}}}\mathrm{Sat}_{\mathbb G_y}^{\mathbb G}(\mathcal O_{\rm Spr}(j_{W(\Phi_y^{Q,n})}^{W(\Phi_y)}\sgn)(\mathbb C)).$$
    %
    %Again by $ \eqref{eq:wy_equality} $, we see it is enough to compute the item for elements in each $ (v+P) \cap A^{Q,n}_{v,0}$ for $ v \in \mathscr{V}\cap \mathscr A_0 $. 
    Moreover, all hyperspecial vertices are $\sim$-equivalent and so we may take $v=0$ for the $\sim$-equivalence class of hyperspecial vertices.
    
    %Let $ \overline{\Phi}^{Q,n}_{y}:=\{\alpha_{Q,n}:\alpha \in \Phi^{Q,n}_y\}$. Note that if $ y \in (v+P) \cap \mathscr A^{Q,n}_{v,0}$, then $ \overline{\Phi}^{Q,n}_y $ has a set of simple roots $\overline{\Delta}^{Q,n}_y \subset \Delta^{\prime}_v \cup \{-\beta:\beta \in \widetilde{\Delta}^{\prime}_v\} $. Thus we can describe it using a subdiagram of the extended Dynkin diagram associated with $ \Delta^{\prime}_v $.
\end{corollary}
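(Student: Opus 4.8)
The plan is to chain together the reductions already in place, the key invariance being identity~\eqref{eq:wy_equality}: writing $\mathcal O(y) := \mathrm{Sat}_{\mathbb G_y}^{\mathbb G}(\mathcal O_{\rm Spr}(j_{W(\Phi_y^{Q,n})}^{W(\Phi_y)}\sgn)(\mathbb C))$, that identity says $\mathcal O(y)$ is unchanged when $y$ is replaced by $w(y+y_0)$ with $w\in W$ and $y_0\in Y^{sc}_{Q,n}$. By Lemma~\ref{lem:max-reduction} the outer maximum may be restricted to $y\in(\mathscr V\cap\overline{\mathscr A}_0)+P$, so it remains to match that with the maximum on the right.

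First I would prove the displayed equality. The inequality ``$\ge$'' is immediate, since $(v+P)\cap\overline{\mathscr A}^{Q,n}_{v,0}\subset(\mathscr V\cap\overline{\mathscr A}_0)+P$ whenever $v\in\mathscr V\cap\overline{\mathscr A}_0$. For ``$\le$'' I would take $y=v+p$ with $v\in\mathscr V\cap\overline{\mathscr A}_0$, $p\in P$, invoke Lemma~\ref{lem:weyl-lemma}(2) to obtain $w\in W(\Phi_v)$ and $y_0\in Y^{sc}_{Q,n}$ with $w(y+y_0)\in\overline{\mathscr A}^{Q,n}_{v,0}$, and then check that $w(y+y_0)$ still lies in the coset $v+P$. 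This is the one point needing care: because $s_\alpha(v)=v-\alpha(v)\alpha^\vee$ with $\alpha(v)\in\mathbb Z$ for $\alpha\in\Phi_v$, the group $W(\Phi_v)$ moves $v$ only within $v+Y^{sc}$, so from $W$-invariance of $P$ and of $Y^{sc}_{Q,n}\subset Y^{sc}\subset P$ we get $w(y+y_0)\in v+P$; hence $w(y+y_0)\in(v+P)\cap\overline{\mathscr A}^{Q,n}_{v,0}$ and, by \eqref{eq:wy_equality}, $\mathcal O(w(y+y_0))=\mathcal O(y)$. This proves ``$\le$'' and shows the maximum over $(\mathscr V\cap\overline{\mathscr A}_0)+P$ equals $\max_{v}\max_{y\in(v+P)\cap\overline{\mathscr A}^{Q,n}_{v,0}}\mathcal O(y)$.

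Next I would collapse the maximum over $v\in\mathscr V\cap\overline{\mathscr A}_0$ to one over $\sim$-classes by running the same argument again: if $v\sim v'$ via $w_0\in W$ with $w_0v-v'\in P$, then for $y\in(v+P)\cap\overline{\mathscr A}^{Q,n}_{v,0}$ the point $w_0y$ lies in $v'+P$, applying Lemma~\ref{lem:weyl-lemma}(2) at $v'$ pushes it into $\overline{\mathscr A}^{Q,n}_{v',0}$ while (as above) remaining in $v'+P$, and \eqref{eq:wy_equality} keeps $\mathcal O$ constant throughout; by symmetry the inner maxima at $v$ and $v'$ coincide. Together with the previous step this gives the displayed equality. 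For the last assertion I would use that a vertex $v$ of the apartment is hyperspecial exactly when $\Phi_v=\Phi$, i.e.\ when $\alpha(v)\in\mathbb Z$ for all $\alpha\in\Phi$, i.e.\ when $v\in P$; since $0\in P$ is hyperspecial and any other hyperspecial $v$ satisfies $v-0=v\in P$, all hyperspecial vertices form the single $\sim$-class of $0$, which may thus be represented by $v=0$. I expect the only real difficulty to be the coset bookkeeping just described---ensuring that the Weyl adjustments and $Y^{sc}_{Q,n}$-translations of Lemma~\ref{lem:weyl-lemma}(2) keep the point in the intended $P$-coset---rather than anything substantive.
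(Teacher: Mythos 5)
Your proposal is correct and follows essentially the same route as the paper: restrict via Lemma \ref{lem:max-reduction}, conjugate by $W(\Phi_v)\ltimes Y^{sc}_{Q,n}$ into $(v+P)\cap\overline{\mathscr A}^{Q,n}_{v,0}$ using Lemma \ref{lem:weyl-lemma}, invoke the invariance \eqref{eq:wy_equality} throughout, and identify hyperspecial vertices with the class of $0$ via $\Phi_v=\Phi$. The only difference is that you spell out the coset bookkeeping ($W(\Phi_v)$ moves $v$ within $v+Y^{sc}\subset v+P$) which the paper leaves implicit.
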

\begin{proof}
    By Lemma \ref{lem:max-reduction} we may restrict the maximum to range over $y\in (\mathscr V\cap \overline{\mathscr A}_0)+P$. Note that by equation \eqref{eq:wy_equality} $v$ can be chosen freely up to $\sim$-equivalence.
    Let $y=v+p$ where $v\in \mathscr V\cap \overline{\mathscr A}_0$ and $p\in P$.
    By Lemma \ref{lem:weyl-lemma}, \eqref{weyl-lemma3} we may conjugate $y$ by $W(\Phi_v)\ltimes Y^{sc}_{Q,n}$ to lie in $(v+P)\cap \overline{\mathscr A}_{v,0}^{Q,n}$ and again by equation \eqref{eq:wy_equality} this does not change the quantity being maximised.

    The last statement follows from the fact for split groups that $\Phi_v =\Phi$ if and only if $v$ hyperspecial.
\end{proof}
Finally we apply Corollary \ref{cor:EquivRed} to each Cartan type to complete Theorem \ref{WFThetaThm}.

\subsection{Type $ A_r $}
Label the simple roots as in the following Dynkin diagram:
\begin{center}
	\dynkin[labels={1,2,3,r-1,r},
	scale=2] A{ooo...oo}
\end{center}
In this case all vertices are hyperspeical so we may assume $v=0$.
Let $y\in 0+P = P$. 
It is of the form $  \sum^r_{i=1}b_{i} \omega_{\alpha_i^{\vee}}$ with $ b_{i}\in \Z $. 

Since $ \widetilde{\alpha}_{Q,n}=\sum^r_{i=1}\alpha_i/n $, we see $ y $ lies in $ \overline{\mathscr A}^{Q,n}_0 $ if and only if each $ b_i \ge 0$ and $ \sum^r_{i=1}b_{i} \le n$. 
But $ \alpha_i/n \notin \overline{\Delta}^{Q,n}_y $ if $ b_i>0 $ and $ -\widetilde{\alpha}_{Q,n} \notin \overline{\Delta}^{Q,n}_y$ if $ \sum^r_{i=1} b_i <n $. 
From this we see readily that 
$$y\in \overline{\mathscr A}_0^{Q,n} \implies \#(\Delta_0'\cup\{-\tilde\alpha_{Q,n}\})-\#(\overline\Delta_{y}^{Q,n})\le n.$$
In other words we can discard at most $ n $ elements from $\Delta^{\prime}_0 \cup \{-\widetilde{\alpha}_{Q,n}\}$ to obtain $\overline{\Delta}^{Q,n}_y$.

Thus $ \Phi^{Q,n}_y $ is of the type $ A_{r_1-1}\times  A_{r_2-1}\times... \times  A_{r_m-1} $ with $\sum_{i=1}^{m}r_i=r+1 $ and $ m\le n $. 

Recall that every nilpotent orbit for type $ A_r $ is parametrized by a partition $a=(a_1,a_2,...,a_k) $ of $ r+1 $ such that $ a_1 \ge a_2 \ge ...\ge a_k $. We define $ L(a)=a_1 $. 

Let $ d=(d_1,d_2,...,d_k) $ be the dual partition of $ (r_1,r_2,...,r_m) $. 
By \cite[\S 13.3]{Carter} we have that 
$$ \mca{O}_{\rm Spr}(j_{W(\Phi_y^{Q,n})}^{W(\Phi_y)}\sgn)=d.$$ 
Since $ m\le n $, it is clear that $ L(d) \le n$.

Let $ \lambda(m_1;m_2) $ be the largest partition of $ m_1 $ satisfying $ L(-) \le m_2 $. Then we have $d \le \lambda(r+1;n) $. But it is shown in \cite[Theorem 4.3]{GT22} that $ \mca{O}_{\rm Spr}(j_{W_\nu}^W \text{sgn})=\lambda(r+1;n) $. Therefore, the main proposition holds for type $ A_r $.

\subsection{Type $ D_r $}
Label the simple roots as in the following Dynkin diagram:
\begin{center}
	\dynkin[labels={1,2,3,,r-2,r-1,r},
	label directions={,,,,right,,},
	scale=1.8] D{ooo...oooo}
\end{center}

\subsubsection{Hyperspecial} \label{h_D}
First we consider the hyperspecial vertices. 
We take $v=0$ and fix $ y =\sum^r_{i=1}b_{i} \omega_{\alpha_i^{\vee}} $ in the coweight lattice $ P $. 
Since $\widetilde{\alpha}_{Q,n}=\alpha_1/n+2\alpha_2/n+...+2\alpha_{r-2}/n+\alpha_{r-1}/n+\alpha_{r}/n$, we see $ y \in \overline{\mathscr A}^{Q,n}_0 $ if and only if each $ b_i\ge 0 $ and $ b_1+2b_2+...+2b_{r-2}+b_{r-1}+b_{r} \le n$.

After removing elements in $\Delta^{\prime}_0 \cup \{-\widetilde{\alpha}_{Q,n}\}$, we observe that $ \overline{\Phi}^{Q,n}_y $ is of the type $ D_{r_1} \times A_{r_2-1} \times A_{r_3-1} \times ... \times A_{r_{k-1}-1}\times D_{r_k}$ with $ \sum_{i=1}^{k}r_i=r $. Clearly $ \Phi^{Q,n}_y $ is of the same type.

Let $$ a=(r_1,\lfloor (r_2+1)/2 \rfloor,\lfloor (r_3+1)/2 \rfloor,...,\lfloor (r_{k-1}+1)/2 \rfloor,r_k) \text{ and } b=(\lfloor r_2/2 \rfloor,\lfloor r_3/2 \rfloor,...,\lfloor r_{k-1}/2 \rfloor) $$be two partitions. Let $ a^{*} $ and $ b^{*} $ be the dual partitions of $ a,b $ respectively. Then \cite{Mac1} and \cite[\S 11.4]{Carter} imply that  $ j_{W(\Phi_y^{Q,n})}^{W(\Phi_y)}\sgn $ correspondes to the an unordered pair $ (a^{*},b^{*}) $.

Each nilpotent orbit in type $ D_r $ is associated with a partition $ \lambda $ of $ 2r $ in which even part occurs an even number of times. There is a procedure to construct an unordered pair $ \phi_D(\lambda)=(\xi,\eta) $ with $ |\xi|+|\eta|=r $ from $ \lambda $, for details see \cite[\S 13.3]{Carter}.

Let $ \lambda=(\lambda_1,\lambda_2,...) $ with $ \lambda_1 \ge \lambda_2\ge...$ be a partition of type $ D $. One can easily verify that its corresponding unordered pair $ \phi_D(\lambda)=(\xi,\eta) $ satisfies the following properties: 
\begin{itemize}
	\item[(i)] If $ \lambda_1 $ is odd, then either $ L(\xi) $ or $ L(\eta) $ is equal to $ (\lambda_1+1)/2 $. 
	
	\item[(ii)] If $ \lambda_1 $ is odd and $ \lambda_2=\lambda_1 $, then $$ L(\xi)=(\lambda_1+1)/2,L(\eta)=(\lambda_1-1)/2 \text{ or } L(\eta)=(\lambda_1+1)/2,L(\xi)=(\lambda_1-1)/2 .$$
	
	\item[(iii)] If $ \lambda_1 $ is even, which leads to $ \lambda_2=\lambda_1 $, then $ L(\xi)=L(\eta)=\lambda_1/2 $.
\end{itemize}

If $ \phi_D(\lambda)=(a^{*},b^*) $, then $\mca{O}_{\rm Spr}(j_{W(\Phi_y^{Q,n})}^{W(\Phi_y)}\sgn)$ is parametrized by $ \lambda $. Notice  $ \alpha_i/n \notin \overline{\Delta}^{Q,n}_y $ if $ b_i>0 $, and $ -\widetilde{\alpha}_{Q,n} \notin \overline{\Delta}^{Q,n}_y$ if $b_1+2b_2+...+2b_{r-2}+b_{r-1}+b_{r} <n $. We describe the type of $ \overline{\Phi}^{Q,n}_y $ by discarding elements in $\Delta^{\prime}_0 \cup \{-\widetilde{\alpha}_{Q,n}\}$ and list the following cases:

\begin{itemize}
	\item[(I)] When $ n $ is odd, if we discard $ t $ elements in $ \{-\widetilde{\alpha}_{Q,n},\alpha_1/n,\alpha_{r-1}/n,\alpha_{r}/n\} $, then we can discard at most $ \lfloor (n-t)/2 \rfloor$ elements in $ \{\alpha_2/n,\alpha_3/n,...,\alpha_{r-2}/n\} $. It is direct to see $ L(a^*) \le (n+1)/2 $ and $ L(b^*) \le (n-1)/2 $ by the procedure of giving the corresponding unordered pair $ (a^*,b^*) $.
	
	\item[(II)] When $ n $ is even, if we discard one element in $  \{-\widetilde{\alpha}_{Q,n},\alpha_1/n\} $, one element in $ \{\alpha_{r-1}/n,\alpha_{r}/n\}$ and at most $ n/2-1 $ elements in $ \{\alpha_2/n,\alpha_3/n,...,\alpha_{r-2}/n\} $. Then $ L(a^*) \le n/2 $ and $ L(b^*) \le n/2 $. In other cases, $ L(a^*) \le n/2+1 $ and $ L(b^*) \le n/2-1 $. In particular, we note that discarding 2$ \varepsilon_1 $ elements in $\{-\widetilde{\alpha}_{Q,n},\alpha_1/n\} $, 2$ \varepsilon_2 $ elements in $ \{\alpha_{r-1}/n,\alpha_{r}/n\}$ and $ n/2-\varepsilon_1-\varepsilon_2 $ elements in $ \{\alpha_2/n,\alpha_3/n,...,\alpha_{r-2}/n\} $ is a necessary and sufficient condition for $ L(a^*)=n/2+1 $, where $ \varepsilon_1,\varepsilon_2 \in \{0,1\} $.
\end{itemize}

For a partition $ \lambda $, let $ \lambda_D $ be its $ D $-collapse. By the three properties demonstrated above and the discussions (I)(II), we get the following results:

When $ n $ is odd, $\mca{O}_{\rm Spr}(j_{W(\Phi_y^{Q,n})}^{W(\Phi_y)}\sgn) \le \lambda(2r;n)_D$.

When $ n $ is even, $\mca{O}_{\rm Spr}(j_{W(\Phi_y^{Q,n})}^{W(\Phi_y)}\sgn) \le (n+1,\lambda(2r-n-1;n))_D$.

It is shown in \cite[\S 4.4]{glt} that $ \mca{O}_{\rm Spr}(j_{W_\nu}^W \text{sgn})$ reaches the upper bound in each case.

\subsubsection{Non-hyperspecial} \label{nh_D}
Let $ 2\le s \le r-2$. Consider $ v \in \overline{\mathscr A}_0 $ such that $ \widetilde{\alpha}(v)=1 $ and $ \alpha_i(v)=0 $ for $ i \neq s $. Then $ v $ is a non-hyperspecial vertex and $ \Phi_v $ is of type $ D_{s}\times D_{r-s} $ with a set of simple roots $\{-\widetilde{\alpha},\alpha_1,...,\alpha_{s-1}\}\cup \{\alpha_{s+1},...,\alpha_{r-1},\alpha_{r}\}$.

Let $y=\sum^r_{i=1}b_{i} \omega_{\alpha_i^{\vee}} \in P$ and $ v+y \in \overline{\mathscr A}^{Q,n}_{v,0} $. First we compute $\mca{O}_{\rm Spr}(j_{W(\Phi_{v+y}^{Q,n})}^{W(\Phi_{v+y})}\sgn)$.

When $ n $ is odd, clearly it is not greater than $ \lambda(2s;n)_D \times \lambda(2(r-s);n)_D $ from the result in hyperspecial case.

When $ n $ is even, assume $\mca{O}_{\rm Spr}(j_{W(\Phi_{v+y}^{Q,n})}^{W(\Phi_{v+y})}\sgn)=\lambda_1 \times \lambda_2$. For $ v+y \in \mathscr A^{Q,n}_{v,0}$, we see $ L(\lambda_1)=L(\lambda_2)=n+1$ implies that $ (-\widetilde{\alpha}+\alpha_1)(v+y)=0 \text{ or } 2$ and $ (\alpha_{r-1}+\alpha_r)(v+y)=0 \text{ or } 2$ by the argument in (II). But since we observe that$$ (-\widetilde{\alpha}+\alpha_1+\alpha_{r-1}+\alpha_r)(v+y) \equiv 1 \text{ mod } 2, $$ then if either of $ L(\lambda_1) $ and $ L(\lambda_2) $ is equal to $ n+1 $, the other one must be less than $ n+1 $.

Now we recall the procedure to attach a weighted Dynkin diagram to a nilpotent orbit of type $ D_r $ in \cite[\S 13.1]{Carter}. Let $ \lambda=(\lambda_1,\lambda_2,...) $ be a partition of $ 2r $ in type $ D $. For each $ \lambda_i $ in the partition, we take the set of integers $ \lambda_i-1,\lambda_i-3,...,3-\lambda_i,1-\lambda_i $. Then we take the union of these sets and write it as $ (\xi_1,\xi_2,..,\xi_{2r}) $ with $ \xi_1 \ge \xi_2 \ge ... \ge \xi_{2r} $. 

Notice that $ \xi_1 \ge \xi_2 \ge ... \ge \xi_r \ge 0 $. We attatch the weighted Dynkin diagram to $ \lambda$ given by
\begin{center}
	\dynkin[labels={ \xi_1-\xi_2 ,\xi_2-\xi_3,, \xi_{r-2}-\xi_{r-1},\xi_{r-1}-\xi_{r},\xi_{r-1}+\xi_{r}},edge length=.55cm,
	label directions={,,,right,,},
	scale=1.8] D{ooo...ooo}
\end{center}
or 
\begin{center}
	\dynkin[labels={ \xi_1-\xi_2 ,\xi_2-\xi_3,, \xi_{r-2}-\xi_{r-1},\xi_{r-1}+\xi_{r},\xi_{r-1}-\xi_{r}},edge length=.55cm,
	label directions={,,,right,,},
	scale=1.8] D{ooo...ooo}
\end{center}
If there is an odd element in $ \lambda $, then $ \xi_r=0 $ and the two diagrams are the same. We observe that 	$L(\lambda)=\xi_1+1=-\xi_{2r}+1 $.

Suppose $\mca{O}_{\rm Spr}(j_{W(\Phi_{v+y}^{Q,n})}^{W(\Phi_{v+y})}\sgn)=\lambda_1 \times \lambda_2$ and its saturation is $ \lambda $. Since the action of elements in $ W $ can only send $ \xi_i $ to an element of the form $ \pm \xi_{\sigma(i)} $, we see the integers for $ \lambda $ actually come from the set of integers for $ \lambda_1 $ and $ \lambda_2 $.

As discussed above, we have $ \lambda_1 \le \lambda(2s;n)_D $ and $\lambda_2 \le \lambda(2(r-s);n)_D $ when $ n $ is odd. Thus the integers for $ \lambda_1 $ and $ \lambda_2 $ are not greater than $ n-1 $. Then $ \lambda \le \lambda(2r;n)_D$.

When $ n $ is even, there is at most one element equal to $ n $ in the set of integers for $ \lambda_1 $ and $ \lambda_2 $. Thus we have $ \lambda \le (n+1,\lambda(2r-n-1;n))_D$.

\subsection{Type $ B_r $}
Label the simple roots as in the following Dynkin diagram:
\begin{center}
	\dynkin[labels={1,2,3,,r-1,r},
	scale=1.8] B{ooo...ooo}
\end{center}
\subsubsection{Hyperspecial} 
Fix $ y =\sum^r_{i=1}c_{i} \omega_{\alpha_i^{\vee}} \in P \cap \overline{\mathscr A}^{Q,n}_0 $.

When $ n $ is odd, $ \Phi_{Q,n} $ is of type $ B_r $ and $ \widetilde{\alpha}_{Q,n}=\alpha_1/n+2\alpha_2/n+2\alpha_3/n+...+2\alpha_r/n$.

When $ n $ is even, $ \Phi_{Q,n} $ is of type $ C_r $ and $ \widetilde{\alpha}_{Q,n}=2\alpha_1/n+2\alpha_2/n+2\alpha_3/n+...+2\alpha_r/n$.

Let $ a=(a_1,a_2,...,a_k) $ and $ b=(b_1,b_2,...,b_l) $ be two partitions such that $ |a|+|b|=r $. Then $ j^{B_r}_{D_{a_1}\times ... \times  D_{a_k} \times B_{b_1}\times ... \times  B_{b_l}}\text{sgn}$ correspondes to an ordered pair $ (a^*,b^*) $, see \cite[\S 11.4]{Carter}. 

By \cite{Mac1}, one has \begin{equation*} \label{induction:B}
	j^{B_k}_{A_{k-1}}\text{sgn}=j^{B_k}_{D_{\lfloor (k+1)/2 \rfloor}\times B_{\lfloor k/2 \rfloor}} \text{sgn}.
\end{equation*}

Assume that $ (a^*,b^*) $ is the partition associated with $ j_{W(\Phi_y^{Q,n})}^{W(\Phi_y)}\sgn $. Then similar to type $D$, a brief discussion leads to the following results:

\begin{itemize}
	\item[(I)] When $ n $ is odd, $ L(a^*)\le (n-1)/2 $, $ L(b^*) \le (n+1)/2 $.
	
	\item[(II)] When $ n $ is even, $ L(a^*)\le n/2-1 $, $ L(b^*) \le n/2+1 $.
\end{itemize}

Every nilpotent orbit in type $ B_r $ is parametrized by a partition $ \lambda $ of $ 2r+1 $ in which even part occurs an even number of times. There is a map from one partition $ \lambda $ of type $ B_r $ to an ordered pair $ \phi_B(\lambda)=(\xi,\eta) $ with $ |\xi|+|\eta|=r$, see \cite[\S 13.3]{Carter}. The map satisfies the following properties:

\begin{itemize}
	\item[(i)] If $ L(\lambda) $ is odd, then $ L(\xi)=(L(\lambda)-1)/2 $.
	\item[(ii)] If $ L(\lambda) $ is even, then $ L(\eta)=L(\lambda)/2+1$.
\end{itemize}

If $ \phi_B(\lambda)=(a^{*},b^*) $, then $\mca{O}_{\rm Spr}(j_{W(\Phi_y^{Q,n})}^{W(\Phi_y)}\sgn)$ is parametrized by $ \lambda $. By (I)(II) and (i)(ii) listed above, one has:

$$\mca{O}_{\rm Spr}(j_{W(\Phi_y^{Q,n})}^{W(\Phi_y)}\sgn) \le \lambda(2r+1;n)_B,$$where $ \lambda(2r+1;n)_B $ is the $ B $-collapse of $ \lambda(2r+1;n) $.

It is shown in \cite[P38]{glt} that $ \mca{O}_{\rm Spr}(j_{W_\nu}^W \text{sgn})$ reaches the upper bound.

\subsubsection{Non-hyperspecial}
Let $ 2\le s \le r$. Consider $ v \in \overline{\mathscr A}_0 $ such that $ \widetilde{\alpha}(v)=1 $ and $ \alpha_i(v)=0 $ for $ i \neq s $. Then $ v $ is a non-hyperspecial vertex and $ \Phi_v $ is of type $ D_{s}\times B_{r-s} $ with a set of simple roots $\{-\widetilde{\alpha},\alpha_1,...,\alpha_{s-1}\}\cup \{\alpha_{s+1},...,\alpha_{r-1},\alpha_{r}\}$. Take $ y=\sum^r_{i=1}c_{i} \omega_{\alpha_i^{\vee}} \in P$ and assume $ v+y \in \overline{\mathscr A}^{Q,n}_{v,0} $.  

When $ n $ is odd, one has $\mca{O}_{\rm Spr}(j_{W(\Phi_{v+y}^{Q,n})}^{W(\Phi_{v+y})}\sgn) \le  \lambda(2s;n)_D \times \lambda(2(r-s)+1;n)_B $ by the results in the hyperspecial cases of type $ B_{r-s} $ and type $ D_s $.

When $ n $ is even, assume $\mca{O}_{\rm Spr}(j_{W(\Phi_{v+y}^{Q,n})}^{W(\Phi_{v+y})}\sgn)=\lambda_1 \times \lambda_2$. By the result in the hyperspecial case, one has $ \lambda_2 \le \lambda(2(r-s)+1;n)_B $. Since $(-\widetilde{\alpha}+\alpha_1)(v+y) \equiv 1 \text{ mod } 2,$ we have $ L(\lambda_1) <n+1 $ by (II) in \S \ref{h_D}. 

The procedure to attach a weighted Dynkin diagram to a nilpotent orbit of type $ B$ is almost the same as type $ D $. Let $ \lambda=(\lambda_1,\lambda_2,...) $ be a partition of $ 2r+1 $ in type $ B $. For each $ \lambda_i $ in the partition, we take the set of integers $ \lambda_i-1,\lambda_i-3,...,3-\lambda_i,1-\lambda_i $. Then we take the union of these sets and write it as $ (\xi_1,\xi_2,..,\xi_{2r+1}) $ with $ \xi_1 \ge \xi_2 \ge ... \ge \xi_{2r+1} $. 

We attatch the weighted Dynkin diagram to $ \lambda$ given by
\begin{center}
	\dynkin[labels={ \xi_1-\xi_2 ,\xi_2-\xi_3,, ,\xi_{r-1}-\xi_{r},\xi_{r}},edge length=.55cm,
	label directions={,,,right,,},
	scale=1.8] B{ooo...ooo}
\end{center}

Suppose $ \lambda $ is the saturation of $ \lambda_1 \times \lambda_2 $. The action of Weyl elements can only send $ \xi_i $ to an element of the form $ \pm \xi_{\sigma(i)} $. Thus as the case of type $ D $, the integers for $ \lambda $ actually come from the set of integers for $ \lambda_1 $ and $ \lambda_2 $. Then one has $ \lambda \le \lambda(2r+1;n) $

\subsection{Type $ C_r $}
Label the simple roots as in the following Dynkin diagram:
\begin{center}
	\dynkin[labels={1,2,3,,r-1,r},
	scale=1.8] C{ooo...ooo}
\end{center}
\subsubsection{Hyperspecial} 
Fix $ y =\sum^r_{i=1}c_{i} \omega_{\alpha_i^{\vee}} \in P \cap \overline{\mathscr A}^{Q,n}_0$.

When $ n $ is odd, $ \Phi_{Q,n} $ is of type $ C_r $ and $ \widetilde{\alpha}_{Q,n}=2\alpha_1/n+2\alpha_2/n+...+2\alpha_{r-1}/n+\alpha_r/n$.

When $ n $ is even, $ \Phi_{Q,n} $ is of type $ B_r $ and $ \widetilde{\alpha}_{Q,n}=2\alpha_1/n+4\alpha_2/n+4\alpha_3/n+...+4\alpha_{r-1}/n+2\alpha_r/n$.

Let $ a=(a_1,a_2,...,a_k) $ and $ b=(b_1,b_2,...,b_l) $ be two partitions such that $ |a|+|b|=r $. Similar to type $ B_r $, the representation $ j^{C_r}_{D_{a_1}\times ... \times  D_{a_k} \times C_{b_1}\times ... \times  C_{b_l}}\text{sgn}$ correspondes to the ordered pair $ (a^*,b^*) $ and one has 
$$j^{C_k}_{A_{k-1}}\text{sgn}=j^{C_k}_{D_{\lfloor (k+1)/2 \rfloor}\times C_{\lfloor k/2 \rfloor}} \text{sgn}.$$

Assume that $ (a^*,b^*) $ is the partition associated with $ j_{W(\Phi_y^{Q,n})}^{W(\Phi_y)}\sgn $. We list the following cases:

\begin{itemize}
	\item[(I)] When $ n $ is odd, $ L(a^*)\le (n-1)/2 $, $ L(b^*) \le (n+1)/2 $.
	
	\item[(II)] When $ n$ is even and $ n/2 $ is even, $ L(a^*)\le n/4 $, $ L(b^*) \le n/4 $.
	
	\item[(III)] When $ n$ is even and $ n/2 $ is odd, if we discard $ \alpha_r/n $, then $ L(a^*)\le (n+2)/4,L(b^*) \le (n-2)/4$. Otherwise, we have $ L(a^*)\le (n-2)/4,L(b^*) \le (n+2)/4$.
\end{itemize}

Every nilpotent orbit in type $ C_r $ is parametrized by a partition $ \lambda $ of $ 2r$ in which even odd occurs an even number of times. As before, a partition $ \lambda=(\lambda_1,\lambda_2,...,\lambda_t) $ with $ \lambda_1 \ge \lambda_2 \ge ... \ge \lambda_t$ of type $ C_r $ to an ordered pair $ \phi_C(\lambda)=(\xi,\eta) $ with $ |\xi|+|\eta|=r$. It satisfies the following properties:

\begin{itemize}
	\item[(i)] If $ \lambda_1 $ is odd, then $ L(\eta) $ is equal to $ (\lambda_1+1)/2 $. 
	
	\item[(ii)] If $ \lambda_1 $ is even, then $ L(\xi) $ is equal to $ \lambda_1/2 $. 
	
	\item[(iii)] If $ \lambda_1 $ is even and $ \lambda_2=\lambda_1 $, then $ L(\xi)=L(\eta)=\lambda_1/2 $.
\end{itemize}

If $ \phi_C(\lambda)=(a^{*},b^*) $, then $\mca{O}_{\rm Spr}(j_{W(\Phi_y^{Q,n})}^{W(\Phi_y)}\sgn)$ is parametrized by $ \lambda $. By (I)(II)(III) and (i)(ii)(iii) listed above, one has:

When $ n $ is odd, $\mca{O}_{\rm Spr}(j_{W(\Phi_y^{Q,n})}^{W(\Phi_y)}\sgn) \le \lambda(2r;n)_C$.

When $ n $ is even and $ n/2 $ is even, $\mca{O}_{\rm Spr}(j_{W(\Phi_y^{Q,n})}^{W(\Phi_y)}\sgn) \le \lambda(2r;n/2)_C$.

When $ n $ is even and $ n/2 $ is odd, $\mca{O}_{\rm Spr}(j_{W(\Phi_y^{Q,n})}^{W(\Phi_y)}\sgn) \le (n/2+1,\lambda(2r-n/2-1;n/2))_C$. 

We note that in the third case, $ L(\mca{O}_{\rm Spr}(j_{W(\Phi_y^{Q,n})}^{W(\Phi_y)}\sgn))=n/2+1 $ requires $ L(a^*) $ to be $ (n+2)/4 $, which implies $ c_r=1 $.

It is shown in \cite[\S 4]{GT22} and \cite[\S 4.4]{glt} that $ \mca{O}_{\rm Spr}(j_{W_\nu}^W \text{sgn})$ reaches the upper bound in each case.

\subsubsection{Non-hyperspecial}
Let $ 2\le s \le r$. Consider $ v \in \overline{\mathscr A}_0 $ such that $ \widetilde{\alpha}(v)=1 $ and $ \alpha_i(v)=0 $ for $ i \neq s $. Then $ v $ is a non-hyperspecial vertex and $ \Phi_v $ is of type $ C_{s}\times C_{r-s} $ with a set of simple roots $\{-\widetilde{\alpha},\alpha_1,...,\alpha_{s-1}\}\cup \{\alpha_{s+1},...,\alpha_{r-1},\alpha_{r}\}$. Take $ y=\sum^r_{i=1}c_{i} \omega_{\alpha_i^{\vee}} \in P$ and assume $ v+y \in \overline{\mathscr A}^{Q,n}_{v,0} $. 

When $ n $ is odd, one has $\mca{O}_{\rm Spr}(j_{W(\Phi_{v+y}^{Q,n})}^{W(\Phi_{v+y})}\sgn) \le  \lambda(2s;n)_C \times \lambda(2(r-s);n)_C $ by the results in the hyperspecial case. 

When $ n $ is even and $ n/2 $ is even, it is also clear that $\mca{O}_{\rm Spr}(j_{W(\Phi_{v+y}^{Q,n})}^{W(\Phi_{v+y})}\sgn) \le  \lambda(2s;n/2)_C \times \lambda(2(r-s);n/2)_C $.

When $ n $ is even and $ n/2 $ is odd, assume $\mca{O}_{\rm Spr}(j_{W(\Phi_{v+y}^{Q,n})}^{W(\Phi_{v+y})}\sgn)=\lambda_1 \times \lambda_2$. We see $ L(\lambda_1)=L(\lambda_2)=n/2+1$ implies that $ (-\widetilde{\alpha}+\alpha_r)(v+y)=2$. But $(-\widetilde{\alpha}+\alpha_r)(v+y) \equiv 1 \text{ mod } 2.$ Then if either of $ L(\lambda_1) $ and $ L(\lambda_2) $ is equal to $ n/2+1 $, the other one must be less than $ n/2+1 $.

As before, by \cite[\S 13.1]{Carter} one can write down the weighted Dynkin diagram for the saturation of $ \mca{O}_{\rm Spr}(j_{W(\Phi_{v+y}^{Q,n})}^{W(\Phi_{v+y})}\sgn) $. Using the same argument as type $ B $ and type $ D $, we see the upper bound listed in the hyperspecial also holds for the saturation. 

\subsection{Exceptional types}
\subsubsection{Hyperspecial}
In Tables 1-5 of the appendix, we demonstrate $$\max\limits_{\substack{y\in P\cap \overline{\mathscr A}_{0}^{Q,n}}}\mathrm{Sat}_{\mathbb G_y}^{\mathbb G}(\mathcal O_{\rm Spr}(j_{W(\Phi_y^{Q,n})}^{W(\Phi_y)}\sgn)(\mathbb C))$$ and the corresponding $ \overline{\Delta}^{Q,n}_y $. We describe $ \overline{\Delta}^{Q,n}_y $ by discarding nodes from the extended Dynkin diagram associated with $ \Delta^{\prime}_0 $ (solid dot means ``discard" and hollow circle means ``reserve"). If there exists $w \in W$ such that $w \Phi^{Q,n}_{y_1}=\Phi^{Q,n}_{y_2}$, we only demonstrate one of them. Comparing with the tables in \cite[\S 4.5]{glt}, we see it is equal to $\mca{O}_{\rm Spr}(j_{W_\nu}^W \sgn)(\mathbb C)$.

\subsubsection{Non-hyperspecial}
In Tables 6-10 of the appendix, we list the weighted Dynkin diagram of each maximal orbit in $ \{ \mca{O}_{\rm Spr}(j_{W(\Phi_y^{Q,n})}^{W(\Phi_v)}\sgn) : y \in v+P\} $ and the corresponding saturation for each non-hyperspecial vertice $v\in \overline{\mathscr A}_0$ up to equivalence. One can check the saturation is not greater than $\mca{O}_{\rm Spr}(j_{W_\nu}^W \sgn)(\mathbb C)$.

\clearpage
\section{Appendix: Exceptional Tables}
% To reduce compilation time of drawing Dynkin diagrams, we directly upload the figures. 
\begin{figure}[h]
	\centering
	\includegraphics[width=0.9\textwidth]{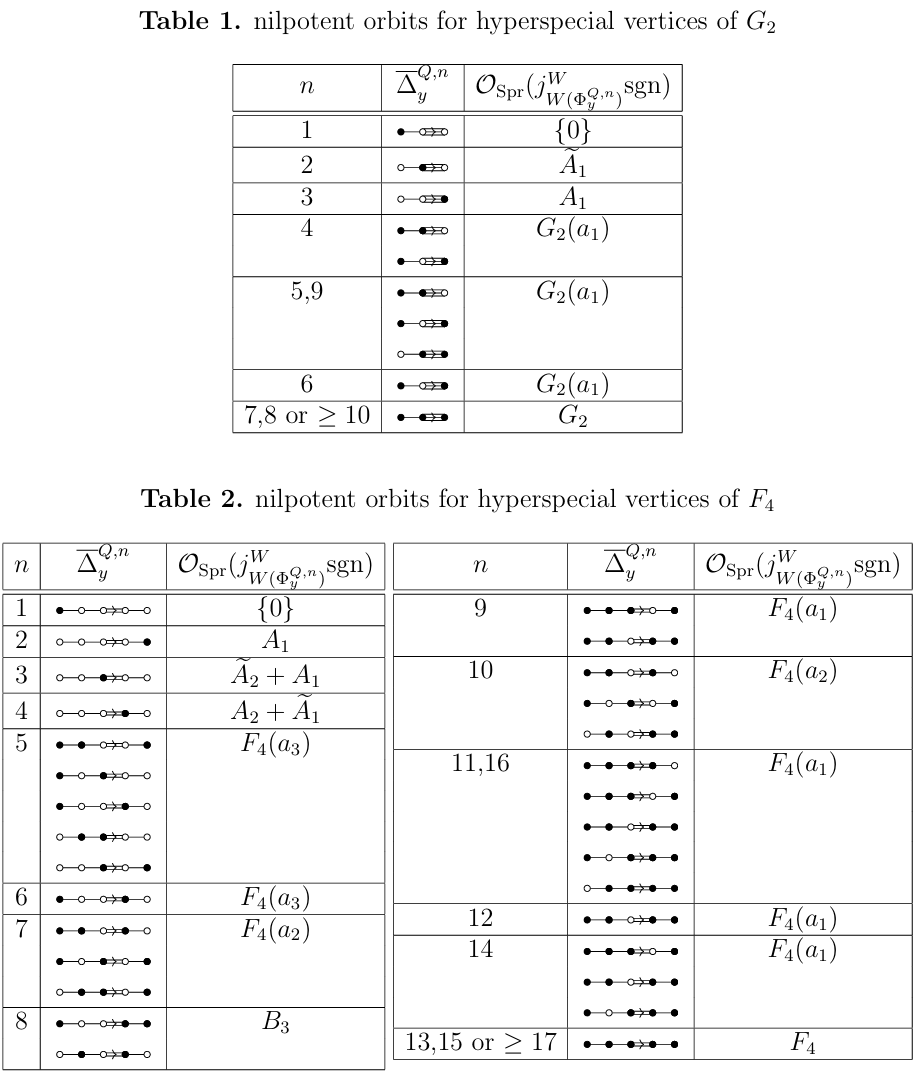}
\end{figure}
\clearpage
\begin{figure}[!htbp]
	\includegraphics[width=0.8\textwidth]{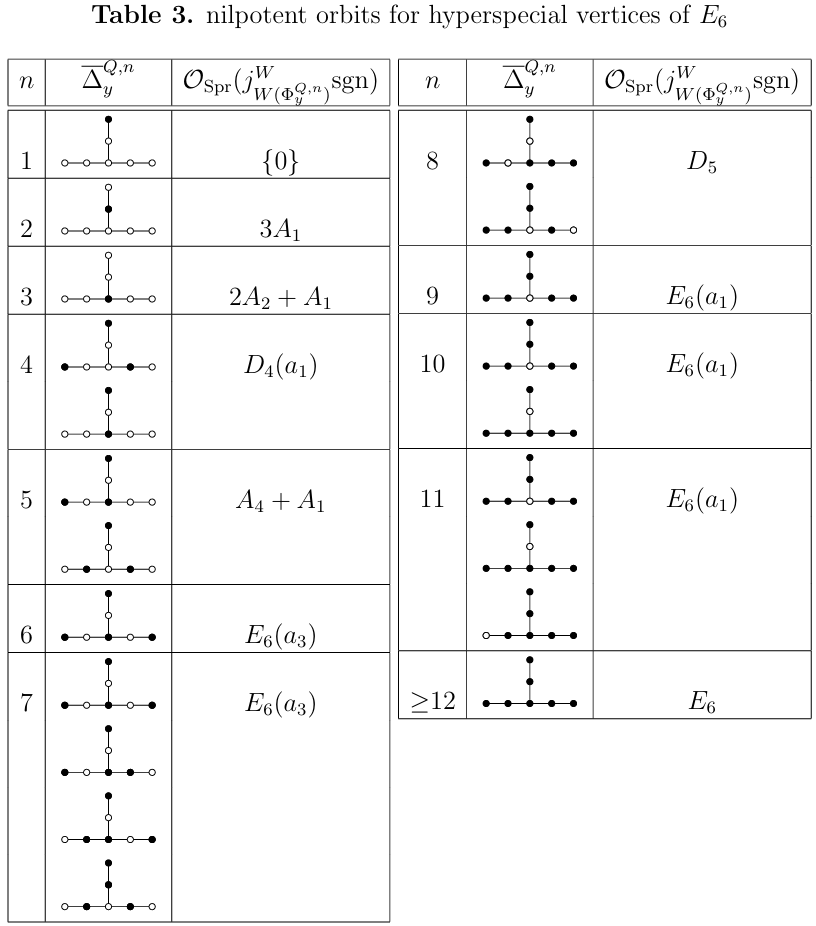}
\end{figure}
\clearpage
\begin{figure}[!htbp]
	\includegraphics[width=0.9\textwidth]{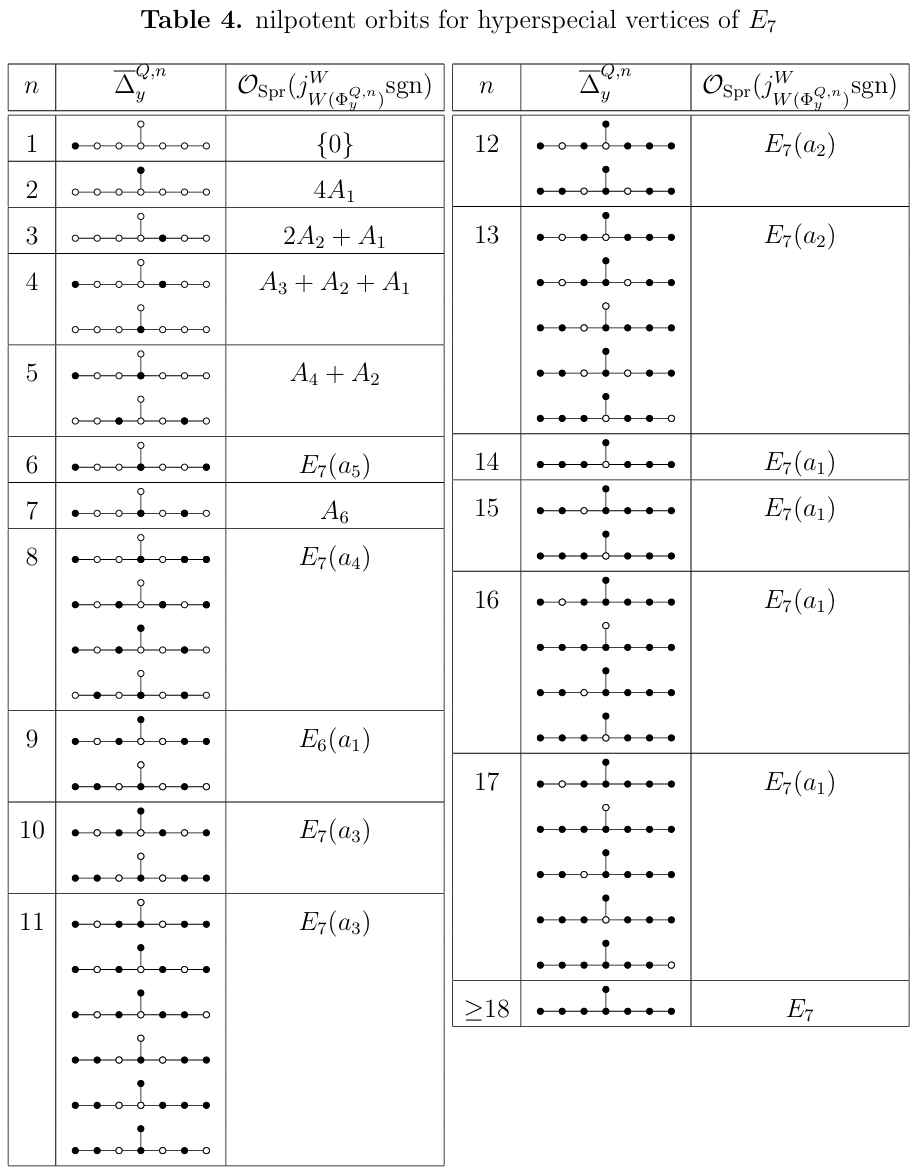}
\end{figure}
\clearpage
\begin{figure}[!htbp]
	\includegraphics[width=0.9\textwidth]{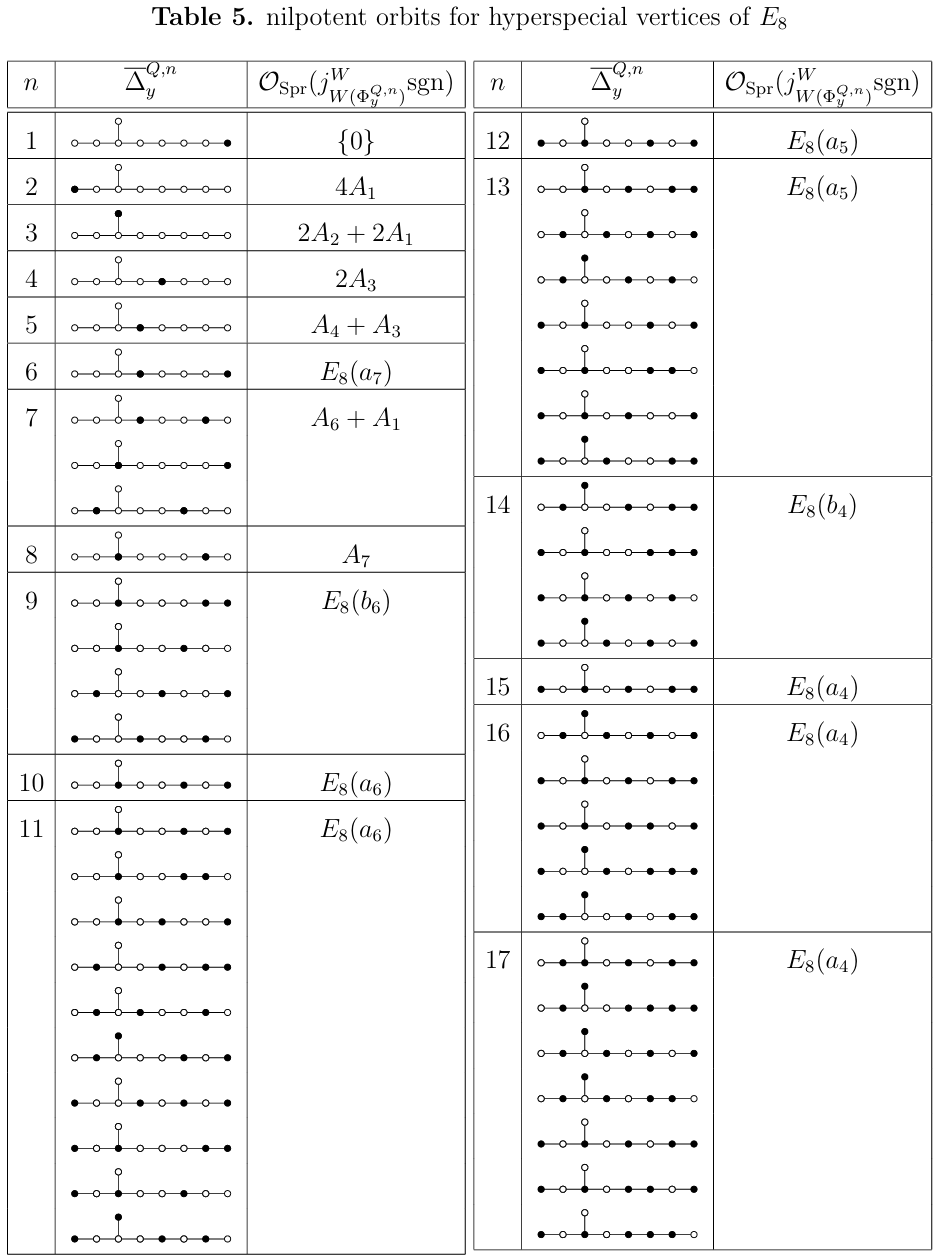}
\end{figure}
\clearpage
\begin{figure}[!htbp]
	\includegraphics[width=0.9\textwidth]{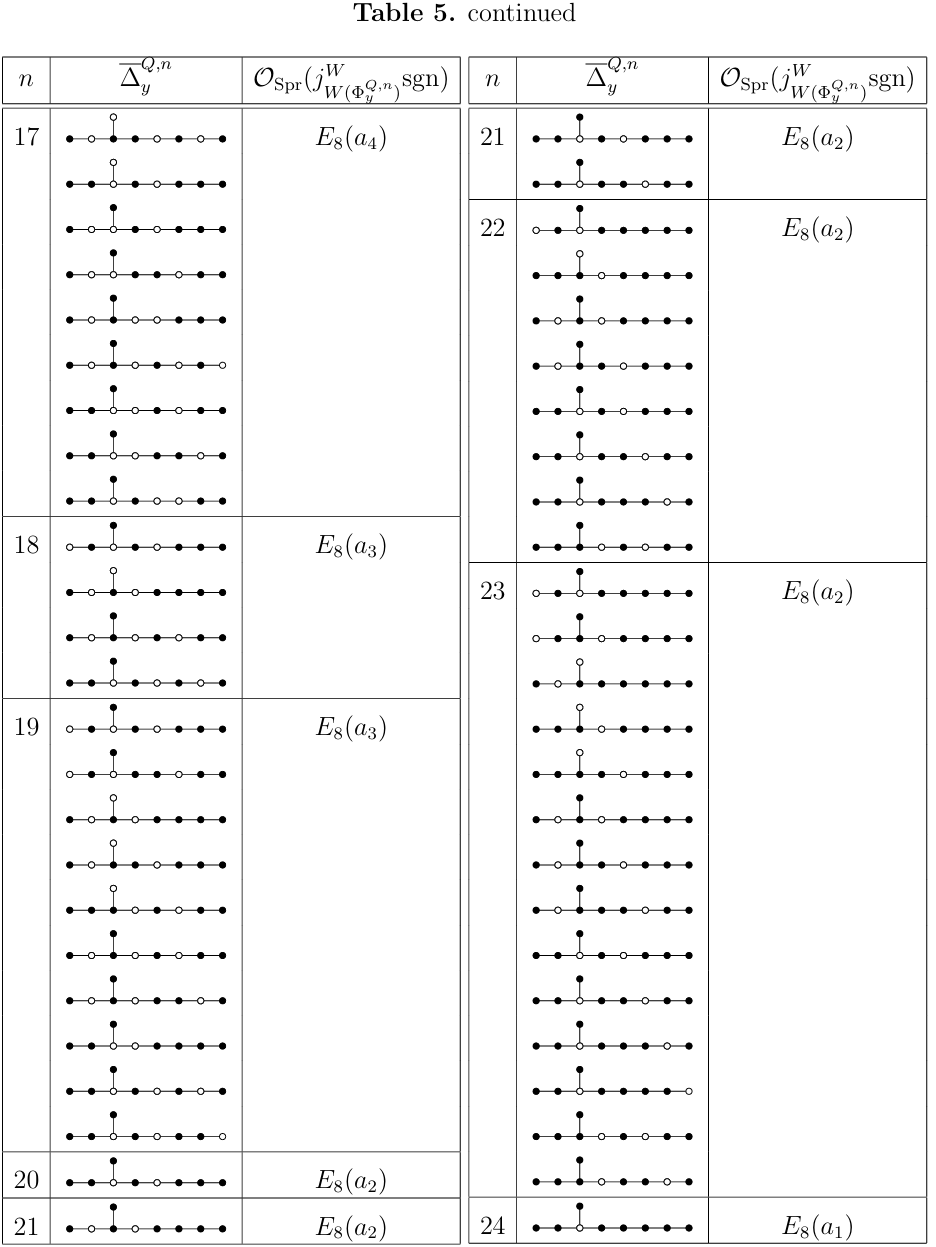}
\end{figure}
\clearpage
\begin{figure}[!htbp]
	\includegraphics[width=0.9\textwidth]{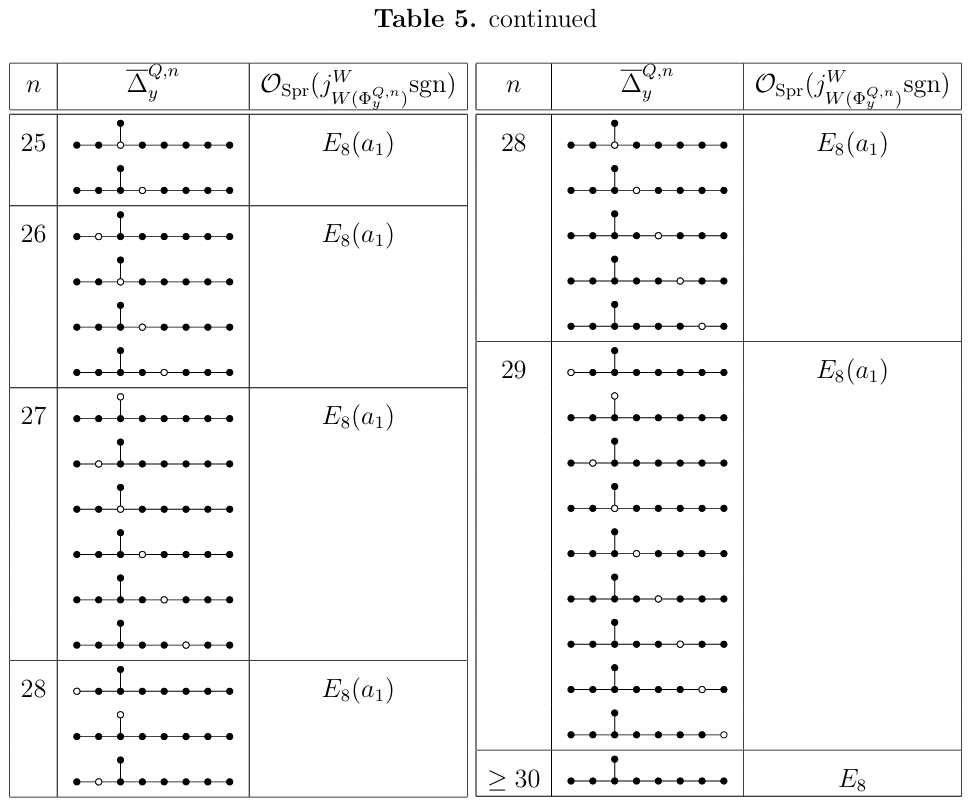}
\end{figure}
\clearpage
\begin{figure}[!htbp]
	\includegraphics[width=0.7\textwidth]{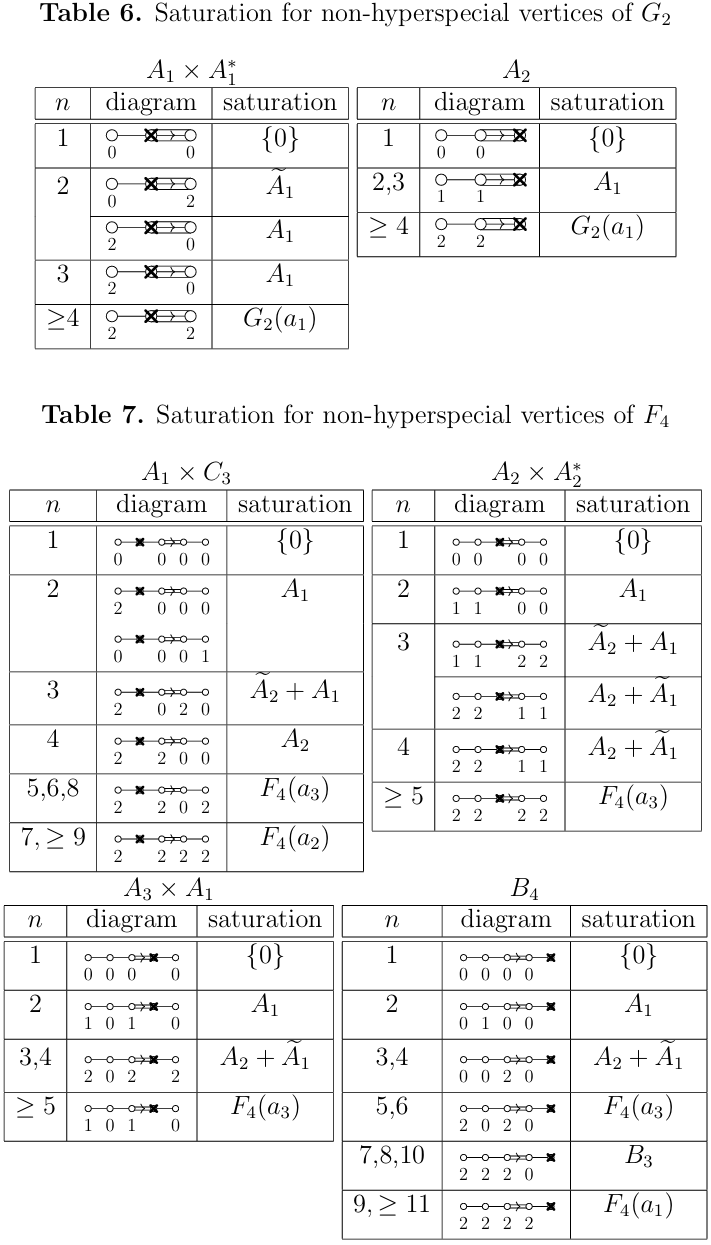}
\end{figure}
\clearpage
\begin{figure}[!htbp]
	\includegraphics[width=0.7\textwidth]{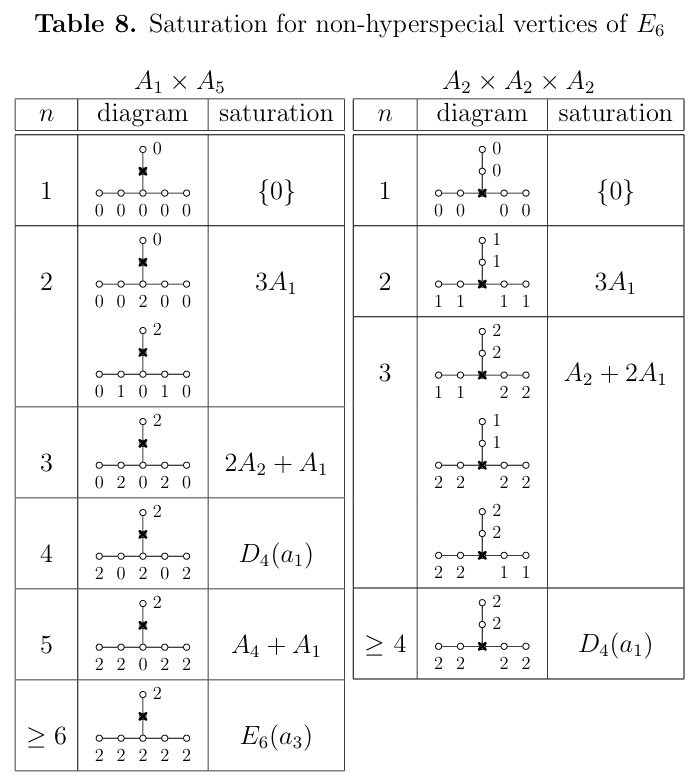}
\end{figure}
\clearpage
\begin{figure}[!htbp]
	\includegraphics[width=0.8\textwidth]{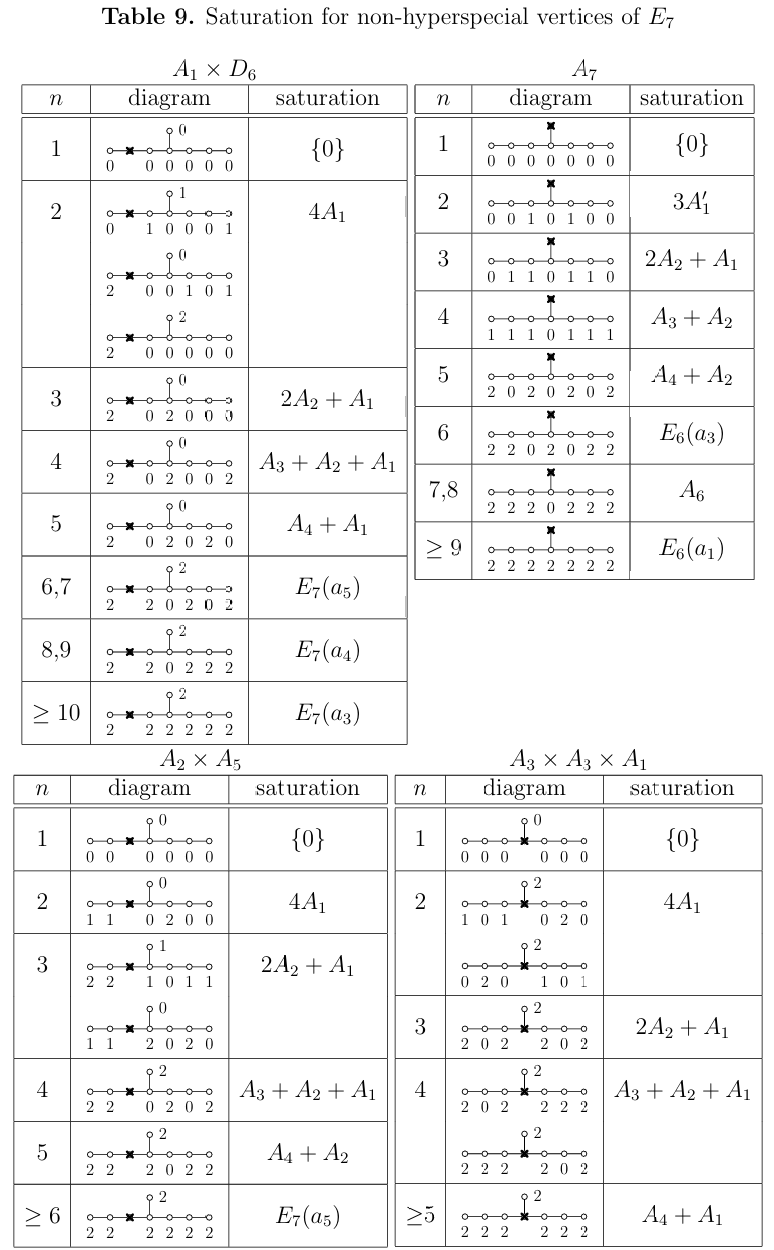}
\end{figure}
\clearpage
\begin{figure}[!htbp]
	\includegraphics[width=0.9\textwidth]{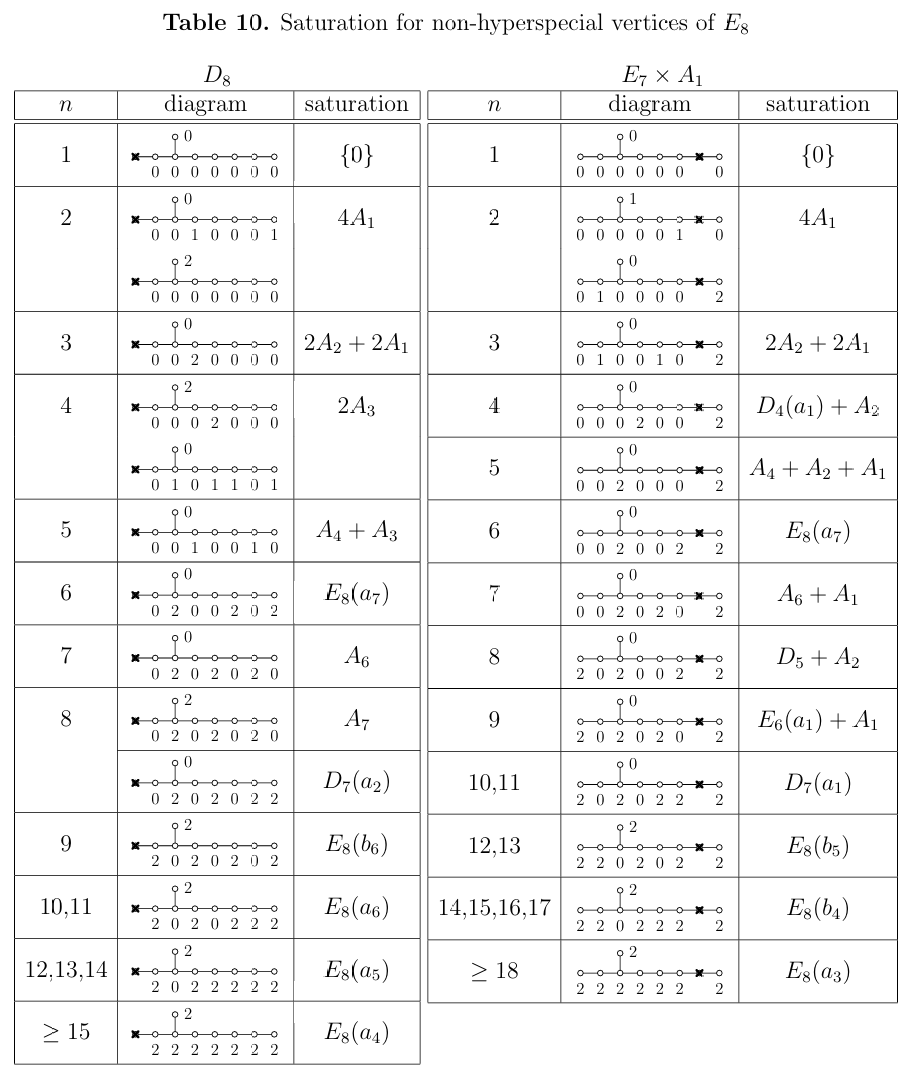}
\end{figure}
\clearpage
\begin{figure}[!htbp]
	\includegraphics[width=0.9\textwidth]{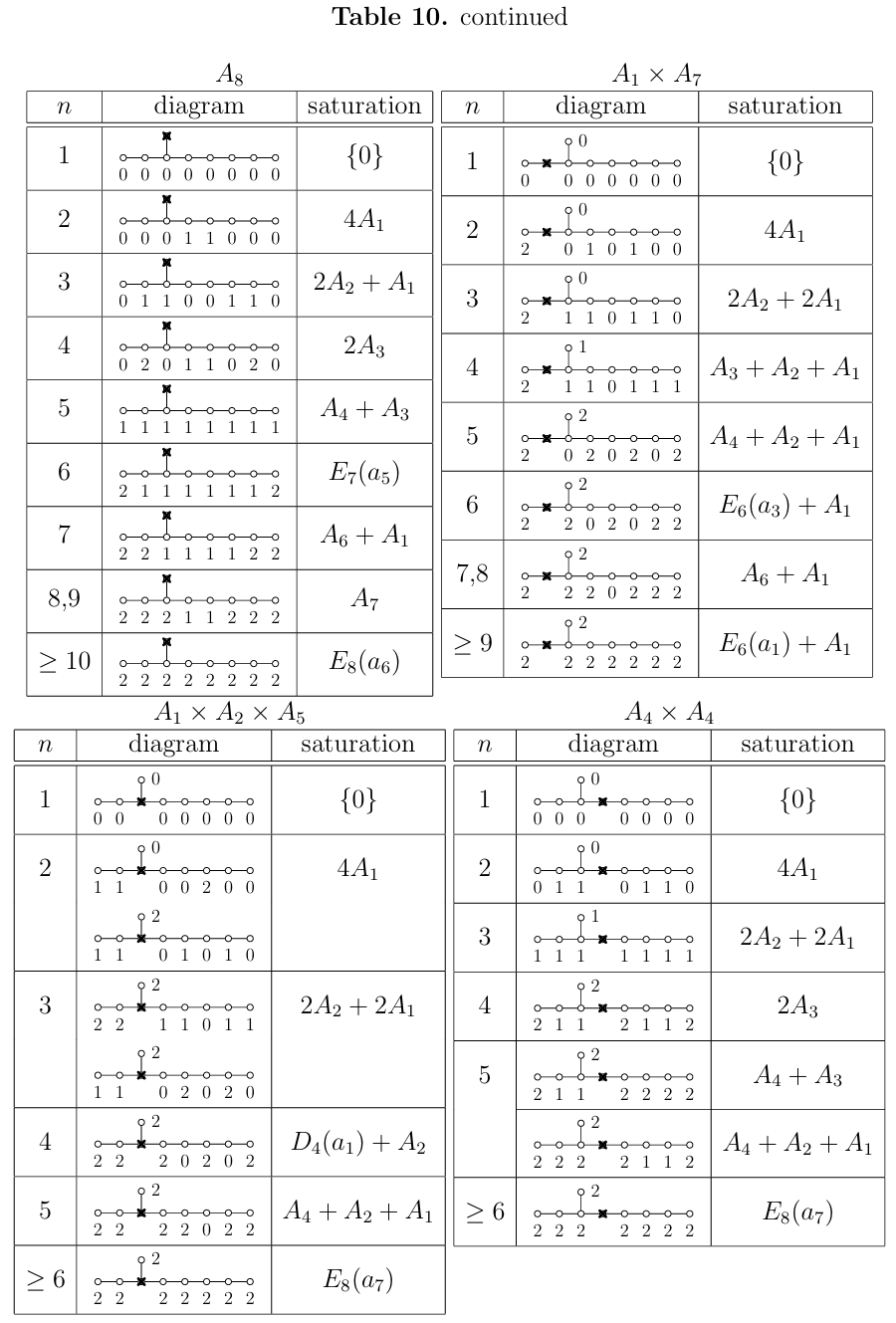}
\end{figure}
\clearpage
\begin{figure}[!htbp]
	\includegraphics[width=0.5\textwidth]{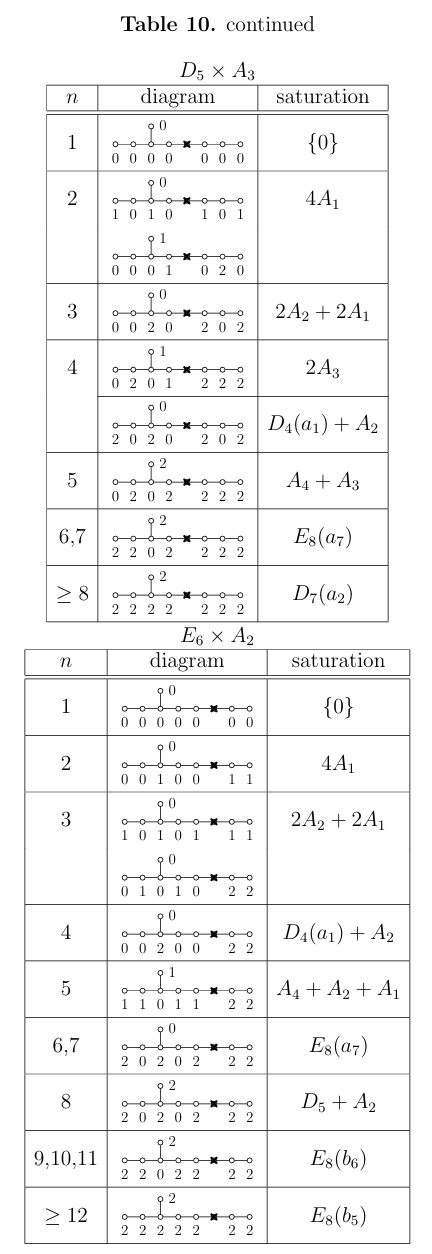}
\end{figure}
\clearpage

\begin{sloppypar} 
\printbibliography[title={References}] 
\end{sloppypar}
\end{document}